\newtheorem{theorem}{Theorem}
\newtheorem{thm}[theorem]{Theorem}
\newtheorem{prop}[theorem]{Proposition}
\newtheorem{lemma}[theorem]{Lemma}
\theoremstyle{definition}
\newtheorem{defn}[theorem]{Definition}
\newtheorem{remark}[theorem]{Remark}
\newtheorem{example}[theorem]{Example}
\newtheorem{question}[theorem]{Question}
\newtheorem*{thmA}{Theorem A}
\newtheorem*{thmB}{Theorem B}
\newtheorem*{thmC}{Theorem C}
\Crefname{thm}{Theorem}{Theorems}
\Crefname{prop}{Proposition}{Propositions}
\Crefname{lemma}{Lemma}{Lemmas}
\Crefname{cor}{Corollary}{Corollaries}
\Crefname{defn}{Definition}{Definitions}
\Crefname{remark}{Remark}{Remarks}
\Crefname{example}{Example}{Examples}
\Crefname{question}{Question}{Questions}
\Crefname{assum}{Assumption}{Assumptions}
\DeclareMathOperator{\Hom}{Hom}
\DeclareMathOperator{\rank}{rank}
\DeclareMathOperator{\Sym}{Sym}
\DeclareMathOperator{\ann}{ann}
\DeclareMathOperator{\hgt}{ht}
\DeclareMathOperator{\rk}{rk}
\DeclareMathOperator{\spc}{Spec}
\DeclareMathOperator{\coker}{coker}
\DeclareMathOperator{\ima}{Im}
\DeclareMathOperator{\Proj}{Proj}
\DeclareMathOperator{\Spec}{Spec}
\DeclareMathOperator{\Quot}{Quot}
\DeclareMathOperator{\depth}{depth}
\DeclareMathOperator{\grade}{grade}
\DeclareMathOperator{\ide}{id}
\DeclareMathOperator{\fitt}{Fitt}
\newcommand{\p}{\mathfrak{p}}
\newcommand{\m}{\mathfrak{m}}
\newcommand{\kk}{{\mathbf k}}
\newcommand{\PP}{{\mathbb P}_\kk}
\newcommand{\nn}{{\mathbf q}}
\newcommand{\iffw}{\text{if and only if}}
\newcommand{\xleftrightarrow}[2][]{\ext@arrow 3359\leftrightarrowfill@{#1}{#2}}
\newcommand{\xdashrightarrow}[2][]{\ext@arrow 0359\rightarrowfill@@{#1}{#2}}
\newcommand{\xdashleftarrow}[2][]{\ext@arrow 3095\leftarrowfill@@{#1}{#2}}
\newcommand{\xdashleftrightarrow}[2][]{\ext@arrow 3359\leftrightarrowfill@@{#1}{#2}}
\def\rightarrowfill@@{\arrowfill@@\relax\relbar\rightarrow}
\def\leftarrowfill@@{\arrowfill@@\leftarrow\relbar\relax}
\def\leftrightarrowfill@@{\arrowfill@@\leftarrow\relbar\rightarrow}
\def\arrowfill@@#1#2#3#4{%
  $\m@th\thickmuskip0mu\medmuskip\thickmuskip\thinmuskip\thickmuskip
   \relax#4#1
   \xleaders\hbox{$#4#2$}\hfill
   #3$%
}
\newcommand{\vertm}{\begin{matrix}1\\2\\3\\4\\5\\6\\7\\8\\9 \end{matrix}}
\newcommand{\vertmm}{\begin{matrix}1\\2\\3\\4\\5\\6\\7\\8\\9\\10 \end{matrix}}
\newcommand{\vphnm}{\vphantom{\vertm}}
\newcommand{\vphnmm}{\vphantom{\vertmm}}
\author{Youngsu Kim}
\address{Department of Mathematical Sciences, University of Arkansas, Fayetteville, Arkansas 72701  U.S.A}
\email{yk009@uark.edu}
\author{Vivek Mukundan}
\address{Department of Mathematics, University of Virginia, Charlottesville, Virginia 22902 U.S.A}
\email{vm6y@virginia.edu}
\subjclass[2010]{primary {13A30}; secondary {13D02,13H15,14A10,14E05}} 
\keywords{blowup algebra, degree of a variety, morphism, multiplicity, rational map, Rees ring, special fiber ring}
 \date
\title{Equations defining certain graphs}
\begin{document}
\maketitle

\begin{center}
{\textit{Dedicated to Professor Bernd Ulrich on the occasion of his 65th birthday}}
\end{center}

\begin{abstract}
Consider the rational map 	$\phi: \mathbb{P}^{n-1}_\kk {\xdashrightarrow{[f_0:\cdots: f_n]}}  \mathbb{P}^{n}_\kk$  defined by homogeneous polynomials $f_0,\dots,f_n$ of the same degree $d$ in a polynomial ring $R=\kk[x_1,\dots,x_n]$ over a field $\kk$. Suppose $I=(f_0,\dots,f_n)$ is a height two perfect ideal satisfying $\mu(I_p)\leq\dim R_p$ for $p\in \spc (R) \setminus V(x_1,\dots, x_n)$. 
We study the equations defining the graph of $\phi$ whose coordinate ring is the Rees algebra $R[It]$. 
We provide new methods to construct these equations using work of Buchsbaum and Eisenbud. 
Furthermore, for certain classes of ideals satisfying the conditions above, 
our methods lead to explicit equations defining Rees algebras of the ideals in these classes. 
These classes of examples are interesting, in that, there are no known methods to compute the defining ideal of the Rees algebra of such ideals.
These new methods also give rise to effective criteria to check that $\phi$ is birational onto its image.
\end{abstract}

\section{Introduction}
Our primary goal in this paper is to understand certain rational maps from the projective $n-1$ space to the projective $n$ space. 
In particular, we will provide an explicit description of the equations defining the image and graph of certain rational maps.
Fix a field $\kk$. Let $X = \mathbb{P}^{n-1}_\kk$ and $W$ a linear system in $
\operatorname{H}^0 (X, \mathcal{O}_X (d))$ for some positive integer $d$.
In general, $W$ does not induce a morphism. However, it defines a rational map from $X$ to $Y := \mathbb{P}^{\dim W - 1}_\kk$ with the base locus defined by $W$. We denote such a rational map by $\phi: X \stackrel{W}{\dashrightarrow} Y$. 
The image of $\phi$ may not be closed in $Y$. To study the algebraic properties of the image, we take the closure of the image $\phi(X)$ in $Y$. The questions we are interested in are the following:
\begin{question}\label{intQuestion}
\begin{enumerate}[\indent$(a)$]
\item What are the equations defining $\overline{\phi(X)}$ in $Y$. More generally, let $\Gamma_\phi$ denotes the closure of the graph of $\phi$. 
What are the equations defining $\Gamma_\phi$ in $X \times Y$?
\item Is the rational map $\phi$ birational onto its image?
\end{enumerate}
\end{question}

These questions have a tight connection with commutative algebra as their coordinate rings are well-studied rings among algebraists. 
In the coordinate ring $R = \kk[x_1,\dots, x_n]$ of $X = \mathbb{P}^{n-1}_\kk$, let $I$ be the ideal generated by the elements in $W$. The ideal $I$ is a homogeneous ideal generated in degree $d$. The \textit{Rees algebra}
\begin{equation} 
R[It] := R \oplus I t \oplus I^2 t^2 \oplus \cdots
\end{equation}
and the \textit{special fiber ring} 
\begin{equation} 
\mathcal{F}(I) := R[It] \otimes \kk \cong R/ \m \cong R[It]/\m R[It],
\end{equation}
where $\m$ is the maximal ideal $(x_1,\dots, x_n)$ are the coordinate rings of the projective varieties $\Gamma_\phi$ in $X \times Y$ and $\overline{\phi(X)}$ in $Y$, respectively.
Hence, the answer to \Cref{intQuestion}(a) is nothing but the equations in the defining ideals of these rings (see \Cref{setupSec} for details). 
In the case where ideal $I$ is \textit{nice}, for instance,  
if $W = \mathcal{O}_X (d)$ the d-th Veronese embedding or $I$ is a complete intersection ideal, the defining ideal of the Rees algebra of $I$ is well-understood, cf.\ \cite[Sec. 5.5]{SH}. 
However, determining defining ideals of Rees algebras is a challenging task, and there is a series of work on this topic. \\

Our main case concerns perfect ideals of codimension (equivalently height) two. Such ideals satisfy the structure theorem of Burch, and a number of papers has been devoted to understanding this class of ideals; for instance \cite{HSV83,Mor96,MU96,Ha02,HSV08,CHW08,Bus09,KPU11,CDA13,CDA14,Lan14,Mad15,BM16,KPU17}.
The novelty in our approach is to incorporate the work of Buchsbaum and Eisenbud \cite{BE74}, in particular, Buchsbaum-Eisenbud multipliers. \\

We work on \Cref{intQuestion} in the following setup. Let $W \subset \operatorname{H}^0(\mathbb{P}^{n-1}, \mathcal{O}_{\mathbb{P}^{n-1}} (d))$ be a subsystem. 
Assume that $\dim_{\kk} W = n+1$ (so $\phi: \mathbb{P}^{n-1}_\kk \stackrel{W}{\dashrightarrow} \mathbb{P}^{n}_\kk$), 
$\operatorname{codim} \operatorname{Fitt}_i (I) \ge i+1$ for $1 \le i \le n-1$, and $\Proj R/I$ is arithmetically Cohen-Macaulay of codimension $2$. Here, $\operatorname{Fitt}_i (I)$ denotes the $i$th Fitting ideal of $I$. 
The condition that $\Proj R/I$ is arithmetically Cohen-Macaulay of codimension 2 is equivalent to the condition of the base locus being defined by a height $2$ perfect ideal $I$.
The \textit{defining ideal} of the Rees algebra $R[It]$ is the kernel of the surjective $R$-algebra homomorphism
\begin{equation}\label{introPresentationB}
B := R \otimes_{\kk} S = R[T_0,\dots, T_n] \to R[It],
\end{equation}
where $S = \kk[T_0,\dots, T_n]$. 
This kernel a bigraded ideal in $B$. 
We often study the defining ideal of $R[It]$ in $\Sym_R(I)$, the symmetric algebra of $I$, since the presentation in \cref{introPresentationB} factors through $\Sym_R(I)$, and the kernel of this induced presentation is well-understood, see \Cref{setupSec}.
By abuse of notation, we often call the kernel of the induced map $\Sym_R(I) \to R[It]$ the \textit{defining ideal} of the Rees algebra $R[It]$. \\

Under this setup, $\Sym_R(I)$ is a complete intersection ring, and the defining ideal of the Rees algebra $R[It]$ in $\Sym_R(I)$ is equal to $\mathcal{A} := H^0_\m (\Sym_R(I))$, the zeroth local cohomology module supported in the maximal ideal $\m = (x_1, \dots, x_n)R$. 
These facts allow us to take the advantage of the bigraded structure of $\mathcal{A}$ and the duality theorem (\Cref{introDualThm}).  
In the sequel, for a bigraded module $M = \oplus_{i,j \in \mathbb{Z}} M_{i,j}$ in $B$, we set $M_i := \oplus_{j \in \mathbb{Z}} M_{i,j}$.
\begin{thm}[{Grothendieck, \cite[Section 3.6]{Jou97}, \cite[Theorem 2.4]{KPU17} }]\label{introDualThm}
For each $i \in \{ 0,\dots, d-n \}$, there exists an isomorphism of finitely generated graded $S$-modules
\begin{equation}
\mathcal{A}_i\cong \Hom_S (\Sym(I)_{d-n-i},S(-n)).
\end{equation}
\end{thm}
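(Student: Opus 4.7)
The plan is to compute $\mathcal{A}=H^0_\m(\Sym_R(I))$ from a free $B$-resolution of $\Sym_R(I)$, where $B = R\otimes_\kk S$, and then convert the resulting local cohomology into the asserted $\Hom$ via graded local duality on $R=\kk[x_1,\ldots,x_n]$. First, I would invoke Hilbert-Burch to obtain a minimal resolution $0\to R^n\xrightarrow{\phi}R^{n+1}\to I\to 0$, and observe that $\Sym_R(I)\cong B/J$, where $J$ is generated by the $n$ entries of the row product $[T_0,\ldots,T_n]\,\phi$. Under the hypothesis $\operatorname{codim}\operatorname{Fitt}_i(I)\ge i+1$, these $n$ bilinear forms (of bidegrees $(d_k,1)$, with $d_k$ the degree of the $k$-th column of $\phi$) form a $B$-regular sequence, so the Koszul complex $K_\bullet$ on them is a bigraded free $B$-resolution of $\Sym_R(I)$. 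In particular, $\Sym_R(I)$ is $B$-perfect of grade $n$.

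Next, I would apply $H^\ast_\m(-)$ termwise to $K_\bullet$. Since $B$ is free as a graded $R$-module and $\m$ is extended from $R$, one has $H^j_\m(B)=H^j_\m(R)\otimes_\kk S$, which vanishes except for $j=n$. The associated hyper-cohomology spectral sequence therefore collapses, and $\mathcal{A}$ is identified with the cokernel of the last differential in the complex $H^n_\m(K_\bullet)$, a complex of graded free $S$-modules. Now graded local duality for $R$ gives $H^n_\m(R)_{-j}\cong\Hom_\kk(R_{j-n},\kk)$, which turns this cokernel into an $S$-linear dual. The self-duality of the Koszul complex, up to a shift by the total bidegree $\bigl(\sum_k d_k,n\bigr)$ of its generators, then identifies the $x$-degree $i$ component of this dual with $\Hom_S(\Sym_R(I)_{d-n-i},S(-n))$. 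The symmetry $i\leftrightarrow d-n-i$ uses that $\sum_k d_k=d$, which follows from the Hilbert-Burch formula $f_j=\pm\det\phi_{\hat{\jmath}}$ since each minor has degree $\sum_k d_k$ and each $f_j$ has degree $d$; the $T$-twist $-n$ comes from the top Koszul term $K_n$.

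The main obstacle is careful bookkeeping of bigraded shifts. The columns of $\phi$ need not have uniform degree, so each $K_i=\bigwedge^i\bigl(\bigoplus_k B(-d_k,-1)\bigr)$ contributes its own distinct pair of twists, and one must verify that after local cohomology and dualization they combine exactly to yield the clean formula $\Hom_S(\Sym(I)_{d-n-i},S(-n))$. A conceptual shortcut is to repackage the entire computation as the bigraded local-duality isomorphism $H^0_\m(M)\cong{}^\vee\!\Ext^n_B(M,\omega_B)$ applied to the $B$-perfect module $M=\Sym_R(I)$, with $\Ext^n_B$ computed via the self-dual Koszul resolution; this makes both the $x$-degree symmetry and the $T$-twist transparent, and restricts the range of $i$ to $\{0,\ldots,d-n\}$, which is exactly the interval where both sides can be nonzero.
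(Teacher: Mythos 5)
The paper does not actually prove this statement: it is quoted from Grothendieck--Jouanolou \cite{Jou97} and \cite[Theorem 2.4]{KPU17}, so your proposal should be judged as a self-contained proof of the cited result, and as such it follows essentially the standard route of those references, specialized to the present complete-intersection situation. Your key identifications are correct: under the hypotheses of \Cref{setupProp} the forms $l_1,\dots,l_n$ of bidegrees $(d_k,1)$ form a regular sequence, so $\mathcal{K}=\mathcal{K}(l_1,\dots,l_n;B)$ resolves $\Sym_R(I)$; since $\m$ is extended from $R$ and $B=R\otimes_\kk S$ is free over $R$, $H^j_\m(B)=H^j_\m(R)\otimes_\kk S$ is concentrated in $j=n$, the hypercohomology spectral sequence collapses, and graded duality for $R$ together with Koszul self-duality (shift by $(d,n)$, with $d=\sum_k d_k$ coming from Hilbert--Burch) identifies the $x$-degree $i$ strand of $H^n_\m(\mathcal{K}_\bullet)$ with the $S(-n)$-twisted $S$-dual of the $x$-degree $d-n-i$ strand $\mathcal{K}_{d-n-i}$ of \Cref{pieceRes}; e.g.\ $[H^n_\m(\mathcal{K}_n)]_i=H^n_\m(R)_{i-d}\otimes S(-n)\cong\Hom_\kk(R_{d-n-i},\kk)\otimes_\kk S(-n)=F_0^*(-n)$. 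One indexing slip should be fixed: $\mathcal{A}=H^0_\m(\Sym_R(I))$ is computed at the \emph{top} of the complex $H^n_\m(\mathcal{K}_\bullet)$, namely as $\ker\bigl(H^n_\m(\mathcal{K}_n)\to H^n_\m(\mathcal{K}_{n-1})\bigr)$, not as the cokernel of the last differential; that cokernel computes $H^n_\m(\Sym_R(I))$ and in fact vanishes in nonnegative $x$-degrees. With this correction your dualization lands exactly on $\ker\bigl(F_0^*(-n)\to F_1^*(-n)\bigr)=\Hom_S(\Sym(I)_{d-n-i},S(-n))$, which is what the rest of your bookkeeping assumes. Two further remarks on the comparison: your closing ``shortcut'' via $H^0_\m(M)\cong{}^\vee\!\Ext^n_B(M,\omega_B)$ needs the relative ($S$-linear) form of local duality, since $\m$ is not the maximal graded ideal of $B$ --- that relative statement is precisely the cited \cite[Theorem 2.4]{KPU17}, so the Koszul computation in your main text is the honest self-contained route; and what the abstract argument does not provide is Jouanolou's explicit description of the isomorphism via Morley forms, which the paper relies on later to convert dual generators of $\mathcal{A}_i$ into actual defining equations.
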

In \cite{Jou97}, Jouanolou constructed the isomorphisms in \Cref{introDualThm} explicitly. 
Hence to understand $\mathcal{A}_i$ it suffices to study its dual $\Hom_S (\Sym(I)_{d-n-i},S(-n))$. 
Our first result concerns this Hom module.
Fix $i \in \{0,\dots, d-n \}$ and let 
\begin{equation}\label{intPres}
F_1 \stackrel{\alpha}{\to} F_0 \to [\Sym_R(I)]_{d-n-i}
\end{equation}
be a graded presentation of $[\Sym_R(I)]_{d-n-i}$. 
Then $\Hom_S (\Sym(I)_{d-n-i}, S(-n))$ corresponds to the kernel of  $\alpha^*$. 
In {\Cref{thm1}}, we show that there exists a complex which induces elements in $\ker \alpha^*$.

\begin{thmA}[{\Cref{thm1}}]
We have a complex of graded free $S$-modules
\begin{equation}\label{eqIntro}
\wedge^{r-1}F_0\otimes S(-s_1)\stackrel{\partial}\to F_0^*\xrightarrow{\alpha^*}F_1^*, 
\end{equation}
where $r = \rank [\Sym_R (I)]_{d-n-i}$, i.e., $\operatorname{Im} (\partial)(-n) \subset \ker \alpha^*(-n) \cong \mathcal{A}_{i}$. 
\end{thmA}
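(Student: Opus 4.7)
The plan is to construct $\partial$ explicitly from the exterior algebra of $F_0$ and a choice of $m_0-r$ columns of the presentation matrix $\alpha$, then verify $\alpha^{*}\circ\partial=0$ by a generic-rank argument on $\ima\alpha$.

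Set $m_0=\rank F_0$ and $m_1=\rank F_1$. Since $[\Sym_R(I)]_{d-n-i}$ has $S$-rank $r$ and $S=\kk[T_0,\ldots,T_n]$ is a domain, the generic rank of $\alpha$ equals $m_0-r$, so $\ima\alpha\subset F_0$ is a torsion-free $S$-submodule of rank $m_0-r$. Fix ordered bases $e_1,\ldots,e_{m_0}$ of $F_0$ and $f_1,\ldots,f_{m_1}$ of $F_1$, and pick any subset $J=\{j_1<\cdots<j_{m_0-r}\}\subset\{1,\ldots,m_1\}$. Form
$$\gamma_J := \alpha(f_{j_1})\wedge\cdots\wedge\alpha(f_{j_{m_0-r}})\in\wedge^{m_0-r}F_0.$$
Using the canonical isomorphism $\wedge^{m_0}F_0\cong S\cdot(e_1\wedge\cdots\wedge e_{m_0})$, define $\partial$ on $\omega\in\wedge^{r-1}F_0$ by
$$[\partial(\omega)](v) \;:=\; \text{coefficient of } e_1\wedge\cdots\wedge e_{m_0} \text{ in } v\wedge\omega\wedge\gamma_J, \qquad v\in F_0.$$
Choosing $s_1$ to be the internal degree of $\gamma_J$ minus that of $e_1\wedge\cdots\wedge e_{m_0}$ makes $\partial\colon\wedge^{r-1}F_0\otimes S(-s_1)\to F_0^{*}$ a homogeneous $S$-linear map of degree zero.

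To verify $\alpha^{*}\circ\partial=0$, for any $f\in F_1$ and $\omega\in\wedge^{r-1}F_0$ one computes
$$[\alpha^{*}\partial(\omega)](f) = [\partial(\omega)](\alpha(f)) = \text{coeff of } e_1\wedge\cdots\wedge e_{m_0}\text{ in } \alpha(f)\wedge\omega\wedge\alpha(f_{j_1})\wedge\cdots\wedge\alpha(f_{j_{m_0-r}}).$$
The $m_0-r+1$ vectors $\alpha(f),\alpha(f_{j_1}),\ldots,\alpha(f_{j_{m_0-r}})$ all lie in the rank-$(m_0-r)$ submodule $\ima\alpha$, so over the fraction field of $S$ they are linearly dependent; hence their wedge vanishes in $\wedge^{m_0-r+1}(F_0\otimes_S\text{Frac}(S))$. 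Since $\wedge^{m_0-r+1}F_0$ is a free, and hence torsion-free, $S$-module, this wedge vanishes already in $\wedge^{m_0-r+1}F_0$. Wedging with $\omega$ and extracting the top coefficient preserves the vanishing, so $\alpha^{*}\circ\partial\equiv 0$. Combined with the duality theorem, this yields $\ima(\partial)(-n)\subset\ker(\alpha^{*})(-n)\cong\mathcal{A}_i$.

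The main difficulty will be the bookkeeping of internal twists so that $s_1$ admits a clean closed form in terms of the degree shifts of $F_0$ and $F_1$; the mathematical content of the argument is elementary multilinear algebra once the presentation and bases are fixed. This construction also exposes the Buchsbaum--Eisenbud multiplier aspect emphasized in the introduction: $\gamma_J$ is a generator of the rank-one module $\wedge^{m_0-r}(\ima\alpha)$, the natural home of the Buchsbaum--Eisenbud multiplier data, and letting $J$ vary over all size-$(m_0-r)$ subsets of columns packages the entire multiplier information into the complex.
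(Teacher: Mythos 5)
Your multilinear‑algebra verification is fine as far as it goes (the vanishing of a wedge of $r_1+1$ vectors lying in the rank‑$r_1$ module $\ima\alpha$ is essentially \cite[Lemma 3.2(a)]{BE74}, which is also the first step of the paper's proof), but it does not prove the theorem as stated, because the map you construct is not the theorem's $\partial$ and your twist is not the theorem's $s_1$. You wedge $m_0-r=r_1=\rk\alpha$ actual columns of $\alpha$, so $\gamma_J$ is a single column of $\wedge^{r_1}\alpha$, and since the resolution of $[\Sym_R(I)]_{\delta-i}$ is linear this element has internal degree $r_1$; your ``$s_1$'' is therefore $r_1$. The theorem asserts the complex with the shift $s_1=r_1-r_2+\cdots+(-1)^{m-1}r_m$ of \Cref{reviewBE74}(2), where $\partial=\eta\circ m_\wedge\circ(\ide\otimes a_1)$ is built from the Buchsbaum--Eisenbud multiplier $a_1\colon S(-s_1)\to\wedge^{r_1}F_0$ coming from the factorization $\wedge^{r_1}\alpha=a_1\circ a_2^*$ of \Cref{BEprelim}. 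Your $\gamma_J$ equals $a^*_{2,J}\cdot a_1(1)$, so the elements of $\ker\alpha^*$ you produce are the paper's elements multiplied by the entries of $a_2^*$; the whole content of the theorem is that one may divide out this multiplier and still stay inside $\ker\alpha^*$. The two constructions coincide only when the resolution has length one; in \Cref{thm1Example}(b), for instance, $r_1=20$ while $s_1=17$. The degree drop is exactly what is used later: the bound $s_1+n$ on the generating degree of $\mathcal{A}_0$, the birationality criterion in Theorem B (whose condition (4) is precisely that the entries of $\ker\alpha^*$ have gcd $1$, i.e., that no further division is possible), and the bidegrees in Theorem C would all be lost if one only had the minor columns $\gamma_J$.

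To close the gap you need the two Buchsbaum--Eisenbud inputs: existence of $a_1,a_2$ with $\wedge^{r_1}\alpha=a_1\circ a_2^*$, and $\sqrt{I(a_2)}=\sqrt{I(\alpha_2)}$, so that $J:=\ima a_2^*$ has positive grade because the resolution is acyclic. Your argument shows $\alpha^*\circ\eta\circ m_\wedge\circ(\ide\otimes\wedge^{r_1}\alpha)=0$; after the factorization this says that $J$ annihilates the image of $\alpha^*\circ\eta\circ m_\wedge\circ(\ide\otimes a_1)$ inside the free module $F_1^*$, and since $\grade J>0$ one concludes $\alpha^*\circ\eta\circ m_\wedge\circ(\ide\otimes a_1)=0$, which is the asserted complex with the twist $-s_1$. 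Two smaller defects of your construction: it depends on the choice of $J$ and degenerates to the zero map whenever that particular column of $\wedge^{r_1}\alpha$ vanishes, so you must at least choose $J$ with $\gamma_J\neq 0$; and redefining $s_1$ as ``the degree of $\gamma_J$'' conflicts with the $s_1$ that the statement (and Theorems B and C) refer to.
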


We note that the above is a complex for any degree of $\mathcal{A}_i$ (equivalently, $[\Sym_R (I)]_{d-n-i}$).
It is natural to ask under what conditions the complex in \cref{eqIntro} is exact. 
We show that for $r \le 2$, this complex is exact provided that $[\Sym_R(I)]_{d-n-i}$ satisfies Serre's condition $(S_{r})$ (\Cref{thmRk1,thmRk2}), and we ask that whether this holds true in general (\Cref{rge3Question}). 
Our theorems provide the differential map $\partial$ and the shift $s_1$ in \cref{eqIntro} explicitly. 
To do this we use a structure theorem of Buchsbaum-Eisenbud \cite{BE74}, and their main theorem and lemmas are our main technical tools in this paper. \\

Our next theorem concerns \Cref{intQuestion}(b). It is a well-known fact that the closed image $\overline{\phi(X)}$ is defined by a single equation, for instance, see \cite[Proposition 2.4]{UV}. Hence, it suffices to study 
\begin{equation}
\mathcal{A}_0 \cong \Hom_S (\Sym(I)_{d-n},S(-n)) = \ker \alpha^*(-n),
\end{equation} 
where $\alpha^*$ as in \cref{eqIntro}. We provide equivalent conditions that the complex in Theorem A is exact. The equivalence $(1)$ and $(4)$ in Theorem B below was first established in {\cite[Corollary 3.7]{KPU16}}.

\begin{thmB} Assume $\kk$ is a field of characteristic zero and $n \ge 3$. Then with $s_1$ as in \cref{eqIntro}, the generating degree of $\mathcal{A}_0$ is at most $s_1 + n$.  
 Furthermore, the following statements are equivalent:
	\begin{enumerate}[\indent$(1)$]
		\item The rational map $\phi$ is birational to its image.	
		\item $\mathcal{A}_0$ is generated in degree $s_1 + n$.
		\item The greatest common divisor of the entries of $\ker \alpha^*$ is 1.
		\item $e(\mathcal{F}(I))= e ({R}/{(g_0,\dots,g_{n-2}):I})$, where $g_0, \dots, g_{n-2}$ are general $\kk$-linear combinations of the $f_i'$s 
		(see \Cref{defGenEle} for the definition of the term general). Here, $e(-)$ denotes the Hilbert-Samuel multiplicity. 
	\end{enumerate}
\end{thmB}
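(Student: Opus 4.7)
The plan is to leverage the complex from Theorem A together with the duality isomorphism $\mathcal{A}_0\cong \ker\alpha^*(-n)$ and the cited equivalence $(1)\Leftrightarrow(4)$ from \cite{KPU16}. The starting observation is that, under the running hypotheses (height two perfect ideal with the stated Fitting conditions, $n\ge 3$), the analytic spread satisfies $\ell(I)=n$, so $\overline{\phi(X)}\subset\mathbb{P}_{\kk}^{n}$ is a hypersurface and $\mathcal{A}_0\subset S$ is a principal ideal $(F)$; in particular $\mathcal{A}_0\cong S(-\deg F)$ is a cyclic free $S$-module, and hence so is $\ker\alpha^*$.

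\textbf{Generating-degree bound.} The minimal generators of $\wedge^{r-1}F_0\otimes S(-s_1)$ sit in degree $s_1$, so their images under $\partial$ live in degree $s_1$ of $F_0^*$; after the shift by $-n$ they contribute elements of $\mathcal{A}_0\cong\ker\alpha^*(-n)$ in degree $s_1+n$. A key intermediate step is to verify that $\operatorname{Im}(\partial)\neq 0$, which we will derive from the explicit Buchsbaum-Eisenbud multiplier construction used in the proof of Theorem A (here the characteristic-zero hypothesis enters). Once a nonzero element of $\mathcal{A}_0$ in degree $s_1+n$ is produced, principality of $\mathcal{A}_0=(F)$ forces any such element to factor as $gF$ with $\deg g\ge 0$, yielding $\deg F\le s_1+n$.

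\textbf{Equivalences.} For $(2)\Leftrightarrow(3)$: since $\ker\alpha^*$ is free of rank one and $\operatorname{Im}(\partial)\subset\ker\alpha^*$ is generated in degree $s_1$, condition $(2)$ (i.e., $\mathcal{A}_0$ is generated in degree exactly $s_1+n$) is equivalent to the statement that a minimal generator of $\ker\alpha^*$ sits in degree $s_1$, equivalently $\operatorname{Im}(\partial)$ and $\ker\alpha^*$ coincide there. Tracked through the entries of the explicit generators of $\ker\alpha^*$ produced by $\partial$ via Buchsbaum-Eisenbud multipliers, this coincidence is exactly captured by $(3)$: a nontrivial common factor $g$ of these entries would allow us to divide through (using that $S$ is a UFD and $\alpha^*$ is $S$-linear) and produce a generator of $\ker\alpha^*$ in strictly smaller degree. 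The equivalence $(1)\Leftrightarrow(4)$ is cited from \cite[Corollary 3.7]{KPU16}. To close the loop we establish $(2)\Leftrightarrow(4)$ by showing $s_1+n=e(R/(g_0,\dots,g_{n-2})\colon I)$: the $n-1$ general linear combinations $g_i$ form a minimal reduction, and a residual-intersection/Hilbert-Samuel computation identifies the multiplicity of $R/((g_0,\dots,g_{n-2})\colon I)$ with $s_1+n$; combined with the standard identity $\deg F=e(\mathcal{F}(I))$, this gives $(2)\Leftrightarrow(4)$.

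\textbf{Main obstacle.} The hardest step is the multiplicity identification $e(R/(g_0,\dots,g_{n-2})\colon I)=s_1+n$, which requires both an explicit expression for the shift $s_1$ in terms of the Buchsbaum-Eisenbud data of the presentation in \cref{intPres} and a careful Hilbert-Samuel computation on the residual intersection (using the genericity of the $g_i$ and, crucially, $n\ge 3$ to ensure enough general elements for a minimal reduction). A subsidiary subtlety is making the content argument for $(2)\Leftrightarrow(3)$ precise in the bi-graded setting of $F_0^*$, i.e., checking that the UFD division of entries by a common factor actually yields an element of $\ker\alpha^*\subset F_0^*$ in the appropriate degree.
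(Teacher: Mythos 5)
Your outline follows the same architecture as the paper: duality plus the Theorem A complex to produce the degree-$s_1$ element $(\eta\circ a_1)(1)\in\ker\alpha^*$, principality of $\mathcal{A}_0$ (via \cite[Proposition 2.4]{UV}), a gcd/UFD argument for the equivalence with generation in degree $s_1+n$ (the paper routes this through the acyclicity criterion $\grade I(a_1)\ge 2$ of \Cref{thmRk1}, but your content argument amounts to the same thing), and the multiplicity formula of \cite{KPU16} for birationality. However, the quantitative heart of the theorem is missing. Everything hinges on the unconditional identity $\deg \operatorname{Im}(\eta\circ a_1)+n=s_1+n=e\bigl(R/(g_0,\dots,g_{n-2}):I\bigr)$ for general $g_i$ (the paper's \Cref{thm4}), and you only assert that ``a residual-intersection/Hilbert--Samuel computation'' gives it. That computation is a substantial argument: one must first show $(g_0,\dots,g_{n-2}):I=(g_0,\dots,g_{n-2}):I^\infty$ for general $g_i$, using that the colon is a geometric $(n-1)$-residual intersection (\cite{Ul1}, \cite{PoliniXie}) and that codimension-two perfect ideals are strongly Cohen--Macaulay (\cite{Hun1}) --- a step also needed to connect with \cite[Corollary 3.7]{KPU16}, whose formula is stated with the saturation $:I^\infty$; then identify the colon with $I_2(\varphi'')$ for the $2\times n$ matrix formed by two rows of a presentation of $(g_0,\dots,g_n)$ (\cite{BE77}), resolve $R/I_2(\varphi'')$ by the Eagon--Northcott complex, extract the multiplicity from the Hilbert series, and finally match the answer with $s_1=\sum_t(-1)^{t-1}r_t$ via the rank identity $\sum_j(-1)^jf_j=1$ coming from the resolution of $[\Sym_R(I)]_\delta$. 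None of this is supplied or replaced by an alternative argument, so the link between $(2)$ and $(4)$ (and hence with $(1)$) is not established.

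Moreover, the one concrete claim you make about this step is wrong: $g_0,\dots,g_{n-2}$ are only $n-1$ elements, while the analytic spread of $I$ equals $n$ (the closed image is a hypersurface in $\mathbb{P}^n_\kk$), so they cannot form a (minimal) reduction of $I$; the correct notion is that $(g_0,\dots,g_{n-2}):I$ is an $(n-1)$-residual intersection, which is exactly what the paper exploits. A smaller misattribution: the nonvanishing of $\operatorname{Im}\partial$ (i.e.\ of $a_1$) has nothing to do with characteristic zero --- it follows from the factorization $\wedge^{r_1}\alpha_1=a_1\circ a_2^*$ with $r_1=\rk\alpha_1$; characteristic zero is needed for the general-element and residual-intersection arguments and for \cite{KPU16}.
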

The explicit isomorphism of Jouanolou provides an explicit form of the defining equation from $\ker \alpha^*$, and there are a few other ways to treat this case. 
In \cite{BuseChardinJouanolou}, Bus\'e, Chardin, and Jouanolou achieved this by  analyzing $[\Sym_R(I)]_q$ for $q \gg 0$ and using the determinant of a free resolution of $[\Sym_R(I)]_q$. 
We note that as $q$ becomes larger, the size of the presentation matrix of $[\Sym_R(I)]_q$ grows in the binomial order of $d = \dim R$.
With our approach, one only needs to analyze the presentation matrix of $[\Sym_R(I)]_{d-n}$.
As far as item (1) in the above theorem is concerned, one may apply a theorem of   Doria, Hassanzadeh, and Simis \cite[Section 2.3]{DHS}. 
Their approach is more general, but it requires understanding an additional graded piece $\mathcal{A}_{1}$ of the defining ideal and its Jacobian dual. Lastly, we compare ours with a result by Boswell and Mukundan \cite{BM16}, where the authors use an iterative Jacobian dual. 
This iteration involves computing large size matrices, and it is computationally not as efficient as our approach. However, their statement provides a closed formula for $\mathcal{A}$ in terms of colon ideals. \\

One of the advantages of our approach is that every condition we impose is general, for instance see \cite[p.\ 316]{BJ03} and \Cref{mainThm}. By the semi-continuity theorem \cite[Thm 14.8b]{Eis}, one can see that the exactness of the complex in \cref{eqIntro} is a general condition.
That is, there exists an open subset $U$, which may be empty, in a parameter space such that the fiber of each point of $U$ satisfies this condition. 
The difficult part is to show that such an open subset $U$ is non-empty. In other words, one needs to exhibit an example whose corresponding point belongs to $U$. This turned out to be the most challenging part of our paper. 
We believe that for $d_1, \dots, d_n \in \mathbb{N}$ and a presentation matrix
\begin{equation}
\varphi = \begin{bmatrix}
x_1^{d_1} &  x_1^{d_2} & \cdots & x_1^{d_n} \\
x_2^{d_1} & x_2^{d_2} & \cdots & x_2^{d_n} \\
\vdots & \vdots & \ddots &  \vdots \\
x_n^{d_1} & x_n^{d_n} & \cdots & x_n^{d_n} \\
g_1 & g_2 & \cdots & g_n 
\end{bmatrix},
\end{equation}
where $g_i$'s are symmetric polynomials of degree $d_i$, the ideal generated by the $n \times n$ minors of $\varphi$
would provide a family of examples. As an evidence, we show that a variant of this with $d_1 = 1, d_2 = 2, d_3 \ge 3$ provides a necessary example (\Cref{bigExample}). This allows us to state our last main result in this paper. 

\begin{thmC}[{\Cref{mainThm}}]\label{thmC}
Let $R = \kk[x,y,z]$, where $\kk$ is a field of characteristic $0$. 
Let $M$ be a $4 \times 3$ matrix whose entries are in $R$ and let $I = I_3(M)$. 
If $M$ is general of type $(1,2, \nn)$, where $\nn > 2$ (see Case 2 in \Cref{defGenMat}),
then the defining ideal of the Rees algebra $R[It]$ is minimally generated by \begin{equation}
\def\arraystretch{1.2}
\begin{array}{c;{2pt/2pt}c|c}
\text{~} & \text{bidegree} & \text{number of elements}  \\
\hline
l_1 & (1,1) & 1 \\ \hline
l_2 & (2,1) & 1 \\ \hline
l_3 & (\nn,1) & 1 \\ \hline
\mathcal{A}_0 & (0, 3\nn+2 ) &  1 \\ \hline
\mathcal{A}_{\nn - i} &  (\nn - i, 3i+1 )&   {2 + i \choose 2}  \\ \hline
\mathcal{A}_\nn & (\nn, 3) & 1  

\end{array}
\end{equation}
 for $1 \le i < \nn-1$.  
In particular, the defining ideal is minimally generated by ${\nn+2 \choose 3} + 4$ elements. Here, $l_1,l_2,l_3$ denote the equations defining $\Sym_R(I)$ in $B$, see \Cref{setupSec}. 
\end{thmC}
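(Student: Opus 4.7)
The plan is to combine the duality of \Cref{introDualThm} with Theorem A (and its exact refinements \Cref{thmRk1,thmRk2}), analyzing each bigraded piece $\mathcal{A}_i$ of the defining ideal separately. Here $R=\kk[x,y,z]$, $n=3$, and the $3\times 3$ minors of $M$ all have degree $d=1+2+\nn=\nn+3$, so $d-n=\nn$ and $\mathcal{A}=\bigoplus_{i=0}^{\nn}\mathcal{A}_i$. The symmetric-algebra relations $l_1,l_2,l_3$ come from the three columns of $M$ and have $R$-degrees equal to the column degrees $1,2,\nn$, accounting for the top three rows of the table at once.

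First I would check that a general matrix $M$ of type $(1,2,\nn)$ satisfies the paper's hypotheses: height-two perfectness of $I=I_3(M)$ is automatic from Hilbert-Burch, and the Fitting conditions off $V(\m)$ follow from codimension counts on the loci determined by the columns. For each $i\in\{0,\dots,\nn\}$, \Cref{introDualThm} gives
\[
\mathcal{A}_i \cong \Hom_S\bigl([\Sym_R(I)]_{\nn-i},\,S(-3)\bigr),
\]
and the $S$-presentation $F_1\xrightarrow{\alpha} F_0\to [\Sym_R(I)]_{\nn-i}$ is read off from the column relations, with $F_0=R_{\nn-i}\otimes_\kk S$ of $S$-rank $\binom{\nn-i+2}{2}$. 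A Koszul-based Hilbert-series computation, using that $l_1,l_2,l_3$ form a regular sequence in $B$, yields
\[
\rank_S [\Sym_R(I)]_{\nn-i}=1 \text{ for } i\in\{0,\nn\}, \quad \rank_S [\Sym_R(I)]_{\nn-i}=2 \text{ for } 1\le i\le \nn-1.
\]
Plugging these ranks into Theorem A, \Cref{thmRk1} handles the rank-one extremes and \Cref{thmRk2} handles the rank-two intermediates; the image of $\partial$ then produces $1$, $\binom{\nn-i+2}{2}$, and $1$ candidate generators of $\mathcal{A}_0$, $\mathcal{A}_i$ (for $1\le i\le \nn-1$), and $\mathcal{A}_\nn$ respectively. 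The shift $s_1$ — and hence the $S$-degree of each generator — is pinned down by the Buchsbaum-Eisenbud multipliers of the Hilbert-Burch resolution of $I$ and matches the bidegrees in the table.

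The single generator of $\mathcal{A}_0$ is the implicit equation of $\overline{\phi(X)}\subset\mathbb{P}^3$; its bidegree $(0,3\nn+2)$ follows once $\phi$ is shown to be birational onto its image, for which I would verify the gcd condition (3) of Theorem B on the entries of $\ker\alpha^*$. Theorem B then forces $\mathcal{A}_0$ to be minimally generated in $S$-degree $s_1+n=3\nn+2$, and summing all contributions,
\[
3+1+\sum_{i=1}^{\nn-1}\binom{\nn-i+2}{2}+1 = 3+1+\bigl(\tbinom{\nn+2}{3}-1\bigr)+1 = \tbinom{\nn+2}{3}+4,
\]
matches the stated total.

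The main obstacle is proving minimality: that no redundancies hide in the image of $\partial$ for the intermediate rank-two pieces and that no extra generators of $\mathcal{A}$ appear in unlisted bidegrees. Although \Cref{thmRk2} supplies exactness of the complex when $[\Sym_R(I)]_{\nn-i}$ satisfies Serre's condition $(S_2)$, verifying that condition uniformly across the family and ruling out hidden generators both require direct input. The efficient route is to produce the explicit matrix of \Cref{bigExample} as a concrete witness for which the full bidegree computation can be carried out (by hand or symbolically) and then invoke semi-continuity on the parameter space of $4\times 3$ matrices of type $(1,2,\nn)$: the number of minimal generators in each fixed bidegree is upper-semi-continuous in families, so non-emptiness via \Cref{bigExample} transfers the conclusion to every general $M$.
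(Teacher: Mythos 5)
Your skeleton follows the paper's own architecture (duality, \Cref{thmRk1,thmRk2}, general-position via semicontinuity, and the explicit witness of \Cref{bigExample} for non-emptiness), and your rank and counting computations are correct; but the two decisive steps are where your plan has genuine gaps. First, the final transfer step is misformulated. You propose to compute the full bigraded Betti table for the single witness and then move it to a general $M$ by ``upper semicontinuity of the number of minimal generators in each bidegree.'' Even granting a semicontinuity statement (which would require flatness, or at least constancy of the bigraded Hilbert function, of the family of Rees algebras over the parameter space --- not something you have, since these invariants genuinely jump off the good locus), semicontinuity runs the wrong way: it would only bound the general member's generator counts \emph{above} by those of the witness, whereas \Cref{mainThm} asserts equality. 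The paper's proof avoids this entirely by making the \emph{height/exactness conditions themselves} the open conditions --- $\hgt I_3(M)\ge 2$, $\hgt I_2(M)\ge 3$, and the prescribed heights of $I(\alpha_j)$ for every strand $[\Sym_R(I)]_j$, all of which are open by \Cref{corGeneralHeight}(b) applied to explicitly presented modules over the parameter ring --- and then, for \emph{every} $M$ in that open locus, \Cref{thmRk1,thmRk2} together with the duality compute the generators and bidegrees exactly; the witness is needed only to show the open set is non-empty. Your plan should be reorganized along those lines, since as stated the last step does not yield the claimed minimal generation for general $M$.

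Second, you defer the verification of the witness to a computation ``by hand or symbolically.'' For a fixed small $\nn$ a Macaulay2 computation would do, but the theorem is for all $\nn>2$, so a single symbolic computation cannot close the argument; one needs a uniform-in-$\nn$ proof that the explicit matrix satisfies $G_3$ and the height conditions on $I(\alpha_1)$, $I(\alpha_2)$ for every strand. This is precisely the technical core of the paper (\Cref{bigExample}, supported by \Cref{lemStaircase}, \Cref{23case}, and \Cref{lemOffDiag}): it requires the staircase argument for $I(\alpha_2)$, a localization at $T_0$ combined with the cyclic symmetry in $T_0,T_1,T_2$ for the middle strands, and for the top strand a further auxiliary-parameter semicontinuity argument to choose the scalar $\gamma$. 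Omitting this is not a routine verification but the hardest part of the proof. (Two smaller points: the shift $s_1$ comes from the Buchsbaum--Eisenbud multipliers of the $S$-resolutions of the strands $[\Sym_R(I)]_{\nn-i}$, i.e.\ of the Koszul complex on $l_1,l_2,l_3$, not of the Hilbert--Burch resolution of $I$; and the $G_3$/height hypotheses and the gcd criterion of \Cref{thm5} for $\mathcal{A}_0$ are themselves general conditions that must be certified by the same witness, not by a soft codimension count.)
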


The paper is organized as follows. In \Cref{secPrel}, we set up the notation and provide preliminaries. 
In \Cref{setupSec}, we explain our setup mentioned in the introduction, the work of Buchsbaum and Eisenbud, and the duality theorem in detail.
In \Cref{sectiononfunctionsinA1}, we prove Theorem A and related statements.
\Cref{sectionOnDegreesofImplicit} is devoted to the equivalence in Theorem B. 
In \Cref{mainResultsec}, we present Theorem C and the promised example.\\

\noindent{\bf Acknowledgment:}
We would like to thank professors Bernd Ulrich and Craig Huneke for their helpful comments and suggestions. 
Also, we owe a lot to the anonymous referee for his/her invaluable comments. The earlier version of the paper had an erroneous definition of the notion of general elements which was kindly pointed out to us by the referee. 

\section{Preliminaries}\label{secPrel}
In this section, we will setup the notation and review some background materials. We refer the reader to \cite{Eis,Har} for basic definitions and notations for algebraic geometry and commutative algebra. 

\subsection{Morphisms between projective spaces and Rees algebras}
Let $\kk$ be a field, $\kk[x_1,\dots, x_n]$, the coordinate ring for $\mathbb{P}^{n-1}_\kk$, and $f_0,\dots,f_n$ homogeneous polynomials of degree $d$ in $\kk[x_1,\dots, x_n]$. Then we have a rational map between projective spaces
\begin{equation}\label{rationalMap}
\phi: \mathbb{P}^{n-1}_\kk {\xdashrightarrow{[f_0:\cdots: f_n]}}  \mathbb{P}^{n}_\kk
\end{equation}
defined by the polynomials $f_0,\dots, f_n$. This map is defined on $\mathbb{P}^{n-1}_\kk \setminus V(I)$ (equivalently, the base locus is $V(I)$).
The image $\ima \phi$ is not a closed subscheme of $ \mathbb{P}^{n}_\kk$ in general. In this article, we study the closed subscheme $\overline{\ima \phi}$ in $\mathbb{P}^{n}_\kk$. 
Let  $\kk[y_0,\dots, y_n]$ be the coordinate ring of $\mathbb{P}^{n}_\kk$. 
Then $\overline{\ima \phi} = \Proj \kk[f_0,\dots, f_n]$, and the (rational) maps between projective schemes 
\begin{equation}
\mathbb{P}^{n-1}_\kk \stackrel{\phi}{\dashrightarrow} \overline{\ima \phi} \subset \mathbb{P}^{n}_\kk
\end{equation}
 corresponds to the maps between $k$-algebras
\begin{equation}
\kk[y_0,\dots,y_n] \twoheadrightarrow \kk[f_0,\dots,f_n] \subset \kk[x_1,\dots,x_n],
\end{equation}
where the first map is defined by $y_i \mapsto f_i$ for $i = 0, \dots, n$. Observe that $\overline{\ima \phi} = \Proj \kk[f_0,\dots, f_n] = V(J)$ for some homogeneous ideal $J$ of $\kk[y_0,\dots, y_n]$\footnote{Such construction holds for arbitrary set of homogeneous ideals of the same degree in $S$, but this is the set up we will work in this paper.}. \\

Let $R = \kk[x_1,\dots, x_n]$, $\m = (x_1,\dots, x_n)R$, and $I = (f_0,\dots, f_n)$. The \textit{Rees algebra} of $I$ is the graded ring
\begin{equation}
R[It] = R \oplus I t \oplus I^2 t^2 \oplus \cdots \subset R[t],
\end{equation}
and \textit{the special fiber ring} of $I$ is the graded ring
\begin{equation}
\mathcal{F}(I) = R[It] \otimes_R R/ \m = R[It]/\m R[It].
\end{equation}

The Rees algebra $R[It]$ is the coordinate ring of the closure of the graph of the rational map in \cref{rationalMap}. Here, the \textit{graph} of a  rational map $\phi: X \dashrightarrow Y$ with base locus $W$ between projective schemes is $\Gamma_{\phi} := \{ (x,y) \in X \times Y \mid y = \phi(x), x \not\in W \}$. 
In addition, if $\deg f_i = d$ for $i = 0,\dots, n$, then the special fiber ring $\mathcal{F}(I)$ is an integral domain and is isomorphic to the subring $\kk[f_0,\dots, f_n]$ of $\kk[x_1,\dots, x_n]$.\\

Let $\Quot(A)$ denote the total ring of fractions of a ring $A$. In the case where $A$ is an integral domain, $\Quot(A)$ is the field of fractions. For fields $F \subset K$, let $[K : F]$ denote the field extension degree. 
 
\begin{remark}[{cf. \cite[Prop. 2.11]{DHS}}]\label{birationalprelim}
The rational map in \cref{rationalMap} is birational to the image if and only if $\kk[f_0,\dots, f_n]$ and $\kk[x_1,\dots, x_n]$ have the same field of fractions. 
\end{remark}

\subsection{Free resolutions and minors of matrices}
Let $R$ be a Noetherian ring. 
In this subsection, we review two theorems of Buchsbaum-Eisenbud on finite free complexes.  One provides a characterization of the acyclicity of a finite free (graded) $R$-complex, and the other one provides a structure theorem for a finite free (graded) acyclic $R$-resolution. \\

Let $\varphi: F \to G$ be a map between finite free $R$-modules of rank $f$ and $g$, respectively. Once we fix ordered bases for $F$ and $G$, we obtain a matrix representation $M$ of $\varphi$, which is an $g \times f$ matrix with entries in $R$. 
For an $m \times n$ matrix $N$ with entries in $R$, let $I_t(N)$ be the ideal generated by $t \times t$ minors of $N$ if $1 \le t \le \min \{ m, n\}$, and we set $I_0(N) = R$ and $I_t (N) = 0$ if $t > \min \{ m,n\}$. 
By abuse of notation, let $I_t(\varphi)$ denote $I_t(M)$, where $M$ is a matrix representation for $\varphi$. 
Matrix representations depend on the choice of bases. However, the ideal $I_t(M)$ does not depend on the choice of bases.
The \textit{rank} of $\varphi$, denoted $\rk \varphi$, is the number $t$ where $I_{t+1}(\varphi) = 0$, but $I_t(\varphi) \neq 0$, and we set $I (\varphi) = I_{\rk \varphi} (\varphi)$. 
For a proper ideal $I$ of $R$, the grade of $I$, denoted $\grade I$ (equivalently, the depth of $I$ in $R$ denoted by $\depth_I R$), is the length of a maximal $R$ regular sequence contained in $I$. It is well-known that the maximal length is independent of regular sequences, cf. \cite[Def. 1.2.11]{BH}.

\begin{thm}[{\cite[Theorem]{BE73}}]\label{thmBE73}
Let $\mathcal{C}_\bullet$ be a finite complex of free $R$-modules of finite rank
\begin{equation}
0 \to F_n \stackrel{\varphi_n}{\to}  \cdots \stackrel{\varphi_2}{\to} F_1 \stackrel{\varphi_1}{\to} F_0.
\end{equation}
Then $\mathcal{C}_\bullet$ is acyclic if and only if for $k = 1,\dots, n$,
\begin{enumerate}[\indent$(1)$]
\item $\rk F_k = \rk \varphi_{k+1} + \rk \varphi_{k}$ and

\item $\grade I(\varphi_k) \ge k$ or $I(\varphi_k) = R$. 
\end{enumerate}
\end{thm}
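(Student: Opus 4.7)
The plan is to prove both directions, relying as the fundamental building block on a McCoy-type injectivity criterion: a map $\varphi\colon F\to G$ of finite free $R$-modules is injective if and only if $\grade I_{\rk\varphi}(\varphi)\ge 1$ (interpreting $I(\varphi)=R$ as grade infinity). This is the $n=1$ case of the theorem itself, and it follows from McCoy's classical determinantal result combined with the observation that the rank condition $\rk F_1=\rk\varphi_1$ forces $\varphi_1$ to be injective at each minimal prime of $R$.

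For necessity, assume $\mathcal{C}_\bullet$ is acyclic. Localizing at each minimal prime yields an exact complex of finite free modules over a field (or product of fields), and additivity of dimensions in an exact sequence of vector spaces immediately gives the rank equalities in (1). For the grade conditions (2), I would induct on $k$. The case $k=1$ is McCoy applied to $\varphi_1$, which is injective as the last map of an acyclic complex. For the inductive step, truncation of $\mathcal{C}_\bullet$ exhibits $\ima\varphi_k$ as having a free resolution of length $n-k$; combining the depth lemma for such resolutions with the standard relation between the annihilator of a module and its Fitting ideals forces $\grade I(\varphi_k)\ge k$.

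For sufficiency, induct on $n$. The base case $n=1$ is again McCoy. For the inductive step, I would use that acyclicity is a local property, reduce to a local ring $(R,\m)$, and apply a descending induction on depth — essentially the Peskine-Szpiro acyclicity lemma. At any prime $\mathfrak{p}$ with $\depth R_\mathfrak{p}<k$, the grade hypothesis $\grade I(\varphi_k)\ge k$ forces $I(\varphi_k)_\mathfrak{p}=R_\mathfrak{p}$, so by the first Buchsbaum-Eisenbud lemma $\varphi_k$ splits off a free summand locally; this shortens the complex, and one then invokes the inductive hypothesis on $n$ together with the preserved rank and grade conditions. Running this across all primes where a hypothetical $H_i(\mathcal{C}_\bullet)$ could be supported rules out each homology module in turn.

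The main obstacle I expect is the bookkeeping in the sufficiency half: after splitting off a free summand or truncating, one must verify that both (1) and the grade bounds in (2) are inherited by the shortened complex, without circularity. A further subtlety is that grade can decrease under localization whereas rank is preserved, so the correct invariant to track is the depth at the ambient prime rather than the global grade; Peskine-Szpiro's acyclicity lemma is designed precisely to manage this interplay and will be the cleanest formal vehicle for the argument.
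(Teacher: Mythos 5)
The paper does not prove this statement at all: it is quoted as a classical black box from Buchsbaum--Eisenbud \cite{BE73} (see also \cite[Theorem 1.4.13]{BH}), so your proposal can only be measured against the classical argument. Your sufficiency half is essentially that argument --- reduce to a local ring, use the grade hypothesis to make $I(\varphi_k)$ the unit ideal at primes of small depth so that $\varphi_k$ splits off a free summand there, and control the remaining homology with the Peskine--Szpiro acyclicity lemma by induction --- and as an outline it is sound, the bookkeeping you flag being real but routine.

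The necessity half, however, has genuine gaps. First, the base case as you state it is off: injectivity of $\varphi\colon F\to G$ is equivalent to $\rk\varphi=\rk F$ \emph{together with} $\grade I_{\rk F}(\varphi)\ge 1$, and checking injectivity at minimal primes is not a valid route when $R$ has embedded primes (for $R=k[x,y]/(x^2,xy)$ the map $y\colon R\to R$ is injective at the unique minimal prime $(x)$ but has kernel $(x)\neq 0$); the correct local test is at the associated primes. Second, localizing at a minimal prime gives an Artinian local ring, not a field, and --- more seriously --- the determinantal rank $\rk\varphi_k$ is computed over $R$ and can drop under localization, so a dimension count over localizations does not by itself give (1). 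What splitting the localized complex at a depth-zero prime $\p$ (via Auslander--Buchsbaum) yields is only $\rk F_k\le\rk\varphi_k+\rk\varphi_{k+1}$; the reverse inequality is not a formal property of complexes (over $R=k[x,y]/(xy)$ the complex $R\xrightarrow{\ x\ }R\xrightarrow{\ y\ }R$ has both maps of rank $1$ and middle rank $1$), and in the classical proof it is extracted from the theory of ranks and Euler characteristics of modules admitting finite free resolutions, which is absent from your sketch. Third, your argument for (2) names no mechanism that produces $\grade I(\varphi_k)\ge k$ rather than merely $\ge 1$: the annihilator--Fitting ideal relation concerns $\fitt_0$ of the cokernel, whereas the standard argument shows that at every prime $\p$ with $\depth R_\p<k$ the truncated localized complex splits completely, hence $I(\varphi_k)\not\subseteq\p$, and then concludes via $\grade I(\varphi_k)=\min\{\depth R_\p \mid \p\supseteq I(\varphi_k)\}$. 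So keep the sufficiency outline, but the necessity direction needs to be rebuilt along these lines rather than by a vector-space dimension count at minimal primes.
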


The map $\varphi : F \to G$ induces maps between exterior powers $\wedge^k \varphi: \wedge^k F \to \wedge^k G$ for any $k$. 
We also denote the image of $\wedge^k \varphi$ by $I_k(\varphi)$. 
(Once we fix bases for $F,G$ and a matrix representation $M$ for $\varphi$, the matrix representation of $\wedge^k \varphi$ is the $k$-minors of $M$ (up to sign). 
Therefore, the image of $\wedge^k \varphi$ in $R$ is $I_k(\varphi)$.)  
For $F$ a free module of rank $f$, an isomorphism $\eta: \wedge^f F \to R$ is called an \textit{orientation} of $F$. We say a finite free module is \textit{oriented} if it is equipped with an orientation. For an oriented finite free module $F$, we have 
$ \wedge^k F \otimes \wedge^{f-k} F \to \wedge^f F \stackrel{\eta}{\to} R$. Hence, we identify $(\wedge^k F)^*$ with $\wedge^{f-k} F$ for oriented free modules. Here, for any $R$-module $L$, $L^* := \Hom_R(L,R)$ denotes the $R$-dual of $L$.

\begin{thm}[{\cite[Theorem 3.1]{BE74}}]\label{BEstatementprelim}
Consider a finite free acyclic $R$-complex\label{BEprelim}
\begin{equation}
0 \to F_n \stackrel{\varphi_n}{\to}  \cdots \stackrel{\varphi_2}{\to} F_1 \stackrel{\varphi_1}{\to} F_0.
\end{equation}
Write $r_i = \rk \varphi_i$. 
For $k = 1, \dots, n$, there exists unique $R$-homomorphism $a_k : R \to \wedge^{r_k} F_{k-1}$ such that 
 \begin{enumerate}[\indent $(1)$] 
   \item $a_n := \wedge^{r_n}\varphi_n: R = \wedge^{r_n} F_n \to \wedge^{r_n} F_{n-1}$, and 
   \item for $k < n$, the diagram
   \begin{equation}
   \xymatrix{
   \wedge^{r_k} F_k   \ar[rr]^{\wedge^{r_k} \varphi_k} \ar[dr]_{a_{k+1}^*}&   &  \wedge^{r_{k}} F_{k-1} \\
   & R \ar[ur]_{a_k}
   }
   \end{equation}
   commutes.
   \item For all $k>1$, $\sqrt{I(a_k)}=\sqrt{I(\varphi_k)}$.
 \end{enumerate}
\end{thm}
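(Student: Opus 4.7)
My plan is to prove the theorem by downward induction on $k$ from $n$ to $1$, constructing the $a_k$ explicitly and then verifying the radical identity. The base case $k=n$ is immediate: acyclicity forces $\varphi_n$ to be injective, so $r_n = \rk F_n$ and the chosen orientation of $F_n$ identifies $\wedge^{r_n}F_n$ with $R$, making $a_n := \wedge^{r_n}\varphi_n$ the required map.

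For the inductive step, given $a_{k+1}\colon R \to \wedge^{r_{k+1}}F_k$, the acyclicity criterion above yields $\rk F_k = r_{k+1} + r_k$, so the chosen orientation of $F_k$ produces a perfect pairing $\wedge^{r_{k+1}}F_k \otimes \wedge^{r_k}F_k \to R$ and hence the dual map $a_{k+1}^*\colon \wedge^{r_k}F_k \to R$ appearing in the diagram. I would construct $a_k$ by first working on the open set $U = \Spec R \setminus V(I(\varphi_k))$, where the acyclic complex splits into short exact sequences of free modules and $a_{k+1}^*$ becomes surjective; here one defines $a_k(1) := \wedge^{r_k}\varphi_k(\xi)$ for any $\xi$ with $a_{k+1}^*(\xi) = 1$. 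Independence of the choice of $\xi$ reduces to the inclusion $\ker a_{k+1}^* \subseteq \ker \wedge^{r_k}\varphi_k$, which holds locally on $U$ because after splitting both kernels are identified with the image of $\wedge^{r_k}\varphi_{k+1}$. The grade hypothesis $\grade I(\varphi_k)\ge k \ge 1$ inherited from the acyclicity criterion then ensures that $\operatorname{im}a_{k+1}^*$ contains a non-zero-divisor, so the locally defined $a_k$ extends uniquely to a global map $R \to \wedge^{r_k}F_{k-1}$; this simultaneously yields existence and uniqueness.

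For the radical identity $\sqrt{I(a_k)}=\sqrt{I(\varphi_k)}$, the inclusion $\sqrt{I(a_k)} \subseteq \sqrt{I(\varphi_k)}$ follows because on $U$ the map $a_k$ is a split injection by construction, so $I(a_k)$ is the unit ideal there and $V(I(a_k)) \subseteq V(I(\varphi_k))$. The reverse inclusion uses the factorization $\wedge^{r_k}\varphi_k = a_k \circ a_{k+1}^*$ to give $I(\varphi_k) \subseteq I(a_k) \cdot I(a_{k+1}^*)$, and a downward induction (the top case being trivial since $a_n$ is $\wedge^{r_n}\varphi_n$ itself) closes the argument. The main obstacle I expect is the global extension in the inductive step: while the construction is transparent on $U$, stretching $a_k$ to a well-defined map on all of $R$ is not formal and relies crucially on the grade condition producing a non-zero-divisor in $\operatorname{im}a_{k+1}^*$ together with a careful check that the resulting fraction lies integrally in $\wedge^{r_k}F_{k-1}$; this is where the orientation data across the different $F_k$ and the ambient acyclicity enter in an essential way.
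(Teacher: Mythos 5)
The paper does not prove this statement; it is quoted verbatim from \cite{BE74} as a preliminary, so there is no internal proof to compare against and your proposal has to stand on its own. Your overall strategy (downward induction, construct $a_k$ on a locus where the complex splits, then globalize using grade) is indeed the right shape, but the globalization step as written has a genuine gap. You define $a_k$ on $U=\Spec R\setminus V(I(\varphi_k))$ and claim it extends to all of $R$ because $\operatorname{im} a_{k+1}^*$ contains a non-zero-divisor. A non-zero-divisor only shows that the candidate $a_k$ has coordinates in the total quotient ring (this gives uniqueness); to conclude the coordinates lie in $R$ you need the ideal cutting out the complement of your open set to have grade at least $2$, via the standard fact that for a free module $F$ and an ideal $J$ with $\grade J\ge 2$ the restriction $\Hom_R(R,F)\to\Hom_R(J,F)$ is bijective. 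With your choice of $U$ this is unavailable precisely when $k=1$, where only $\grade I(\varphi_1)\ge 1$ is guaranteed. The correct ideal to divide over is $I(a_{k+1})$, whose radical is $\sqrt{I(\varphi_{k+1})}$ by induction and whose grade is $\ge k+1\ge 2$ for every $k\ge 1$; this is exactly how Buchsbaum--Eisenbud factor $\wedge^{r_k}\varphi_k$ through $a_{k+1}^*$. Also, the clause ``both kernels are identified with the image of $\wedge^{r_k}\varphi_{k+1}$'' is not correct as stated: on the split locus $\ker a_{k+1}^*$ is spanned by the wedge monomials involving at least one factor from $\ker\varphi_k$, and to see this you must know that $a_{k+1}(1)$ locally generates $\wedge^{r_{k+1}}\ker\varphi_k$, which itself requires invoking the uniqueness statement for the localized complex (compatibility of the $a_j$ with localization) --- a point your induction never records.

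The treatment of part (3) is also incomplete. Your two arguments prove the same, easy, inclusion: split injectivity of $a_k$ off $V(I(\varphi_k))$ gives $V(I(a_k))\subseteq V(I(\varphi_k))$, which is $\sqrt{I(\varphi_k)}\subseteq\sqrt{I(a_k)}$, not $\sqrt{I(a_k)}\subseteq\sqrt{I(\varphi_k)}$ as you wrote; and the factorization $\wedge^{r_k}\varphi_k=a_k\circ a_{k+1}^*$ gives $I(\varphi_k)=I(a_k)\,I(a_{k+1})$, hence again only $\sqrt{I(\varphi_k)}\subseteq\sqrt{I(a_k)}$. The substantive half, $\sqrt{I(a_k)}\subseteq\sqrt{I(\varphi_k)}$, is never established: from $\p\supseteq I(\varphi_k)=I(a_k)I(a_{k+1})$ you only get $\p\supseteq I(a_k)$ or $\p\supseteq I(a_{k+1})$, and in the second case a further argument using the grade estimates is required, so ``a downward induction closes the argument'' does not suffice. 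Note that this is exactly where the hypothesis $k>1$ must enter: the identity is false for $k=1$ in general (in the Hilbert--Burch situation $\varphi_1=a_1\circ a_2^*$ with $a_1$ multiplication by a common factor $g$, and $\sqrt{(g)}\neq\sqrt{(g)\cdot I(\varphi_2)}$ typically), and since your argument nowhere uses $k>1$, it cannot be complete as it stands.
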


\subsection{General property}\label{secGeneral}
In this subsection, we recall the notion of a general property. 
We will follow the section ``general object'' in \cite{Har92}. 
Let $X$ be a variety (or a scheme) parametrized by (closed) points in an (irreducible and reduced) variety $Y$ and $\mathcal P$ a property on $X$. 
We say that $\mathcal P$ is \textit{general} or a \textit{general property} with respect the pair $X$ and $Y$, if the set $\{ p \in Y \mid \text{object parametrized by} p~\text{satisfies}~\mathcal{P} \} \subset Y$ is a dense open subset. 
If $\mathcal{P}$ is a general property and $y \in Y$ satisfies $\mathcal{P}$, then $y$ or the object of $X$ parameterized by $y$ is called \textit{general} or a \textit{general member} with respect to $\mathcal{P}$. 
In the sequel, whenever we use the phrase an object $G$ is general or a general member, it is understood that it refers to a general property for a pair $X$ and $Y$, and  $G$ a general member with respect to this general property. \\

We will use this notion of a general property in the following setup. 
Let $\kk$ be a field of characteristic zero, $R = \kk[x_1, \dots, x_n]$ a polynomial ring, and $A$ another polynomial ring over $\kk$. 
Further, let $B = A \otimes_{\kk} R \cong A[x_1,\dots,x_n]$ and $J$ a homogeneous ideal of $B$ which does not contain any element of $A$ other than zero, i.e., $J \cap A = 0$. 
Here, we set $X = \Proj_Y B/J$, where $Y = \Spec A$. 
Our parameter space $Y$ will be always affine space over a field of characteristic zero, and we will consider only closed points of $Y$.
By abuse of terminology, we say an $R$-ideal $I$ is \textit{general} or \textit{a general member} with respect to some general property $\mathcal{P}$ if $\Proj_\kk R/I$ is a general member with respect to $\mathcal{P}$ for the pair $X$ and $Y$.
We also say a sequence of elements $g_1, \dots, g_s$ of $R$ (or a matrix $M$ whose entries are in $R$) is \textit{general} or \textit{a general member} with respect to some general property $\mathcal{P}$ if the ideal $(g_1,\dots, g_s)$ (or $I_s(M)$ for some fixed $s$) is a general member with respect to $\mathcal{P}$. 
For the sake of completeness, we list the setups of two cases in detail.
We will use Case 1 in \Cref{sectionOnDegreesofImplicit} and Case 2 for \Cref{mainThm} and in its proof.\\

\textbf{Case 1:}\label{defGenEle}  
Let $R =\kk[x_1,\dots, x_n]$ be a polynomial ring over a field of characteristic zero $\kk$ and $f_1,\dots, f_l$ homogeneous polynomials of degree $d \ge 1$ in $R$.
Fix a positive integer $s$. 
Consider the parameter space $A := \kk[ u_{ij} \mid 1 \le i \le l, 1 \le j \le s]$, and for $1 \le j \le s$, set 
\begin{equation}
F_j := u_{1j} f_1 + u_{2j} f_2 + \cdots + u_{lj} f_l,
\end{equation}
in $B := A \otimes_{\kk} R \cong A[x_1,\dots,x_n]$. Write $J = (F_1,\dots, F_s) \subset B$. 
Here $Y = \Spec A$ and $X = \Proj_Y B/J$. 
A general member satisfying some property with respect to the pair $X$ and $Y$ is also often called as \textit{$s$-general elements} or \textit{$s$-general $\kk$-linear combination} of $f_1,\dots, f_l$. \\

\textbf{Case 2:}\label{defGenMat} 
$R = \kk [x_1, \dots, x_n]$ be a polynomial ring over a field of characteristic zero $\kk$. 
Fix positive integers $m,l$ and $d_1, \dots, d_l$. 
For $1 \le j \le l$, let $h_j$ denote the number of monomials of $R$ of degree $d_j$, and define $h := h_1 + \dots + h_l$. 
Our parameter space is $A = \otimes_{\kk} \, A_{ij}$, where $A_{ij} = \kk[ u_{ij,k} \mid  1 \le k \le h_1 ]$ for $1 \le i \le m, 1 \le j \le l$.
In $B := A \otimes_{\kk} R = A[x_1,\dots,x_n]$, consider the $m$ by $l$ matrix ${M}^\sim$ whose $(i,j)$-entry is 
\begin{equation}
u_{ij,1} m_{d_j,1} + u_{ij,2} m_{d_j,2} + \cdots + u_{ij,h_j} m_{d_j,h_j},
\end{equation}
where $\{m_{d_j,1}, \dots, m_{d_j,h_j} \}$ is a fixed monomial basis of $R$ of degree $d_j$. 
Let $J = I_s (M^\sim)$, where $s$ is an integer. 
Here, $X = \Proj_Y B/J$ with $Y = \Spec A$.
We say that an $m \times l$ matrix $M$ with entries in $R$ is \textit{general of type $(d_1, \dots, d_l)$}  if for some $s$, $I_s(M)$ is a general member for some general property with respect to this pair $X$ and $Y$.\\

We apply the general property in Case 2 
to the height of $I_s(M)$ for some integer $s$. 
The open subset defining such a general property corresponds to the complement of the closed subset defined by the ideal $I_e$ in \Cref{corGeneralHeight}(b) (in our notation, $S = B/J$), provided that such complement is non-empty.
We will use this correspondence in the proof of the main theorem (\Cref{mainThm}).  

\begin{theorem}\label{corGeneralHeight}  
Suppose that $A$ is a Noetherian ring and $S = S_0 \oplus S_1 \oplus \cdots$ is a positively graded ring which is finitely generated over $A = S_0$.  
Then we have the following statements.
\begin{enumerate}[$(a)$]
\item {\cite[Theorem 14.8(b)]{Eis}} For any integer $e$, there exists an $A$-ideal $I_e$ (depending on $e$) such that for any maximal ideal $\m$ of $A$, 
\begin{equation}\label{labUSC}
\dim A/\m \otimes_A S \ge e~\quad \iffw~\quad \m \supset I_e.
\end{equation}

\item Write $S = A[x_1,\dots, x_n]/J$. 
For any integer $e$, there exists an $A$-ideal $I_e$ (depending on $e$) such that for any maximal ideal $\m$ of $A$, 
\begin{equation}\label{labUSC}
\hgt J (A/\m) [x_1,\dots, x_n] \le n-e~\quad \iffw~\quad  \m \supset I_e. 
\end{equation}
\end{enumerate}
\end{theorem}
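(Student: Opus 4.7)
The plan is to reduce (b) to (a) by translating the height of $J(A/\m)[x_1,\dots,x_n]$ into the Krull dimension of the fiber $A/\m \otimes_A S$, and then invoking part (a) with the same integer $e$.

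First I would observe that for any maximal ideal $\m$ of $A$, the fiber of the structure map at $\m$ is
\begin{equation}
A/\m \otimes_A S \cong (A/\m)[x_1,\dots,x_n]/J(A/\m)[x_1,\dots,x_n],
\end{equation}
so computing $\dim(A/\m \otimes_A S)$ amounts to computing the dimension of a quotient of a polynomial ring over the field $A/\m$. Since $(A/\m)[x_1,\dots,x_n]$ is a Cohen–Macaulay, catenary, equidimensional ring of dimension $n$, for any ideal $J'\subset (A/\m)[x_1,\dots,x_n]$ one has the standard identity $\dim((A/\m)[x_1,\dots,x_n]/J') = n - \hgt J'$; this reduces to choosing a minimal prime of $J'$ of minimum height and applying the dimension formula $\hgt \p + \dim (A/\m)[x_1,\dots,x_n]/\p = n$, while noting that larger minimal primes give quotients of smaller (or equal) dimension.

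Applying this with $J' = J(A/\m)[x_1,\dots,x_n]$ gives the key equivalence
\begin{equation}
\hgt J(A/\m)[x_1,\dots,x_n] \le n-e \quad\iffw\quad \dim (A/\m\otimes_A S) \ge e.
\end{equation}
Now part (a), applied to the finitely generated positively graded $A$-algebra $S$ with the same integer $e$, produces an $A$-ideal $I_e$ (depending on $e$) such that $\dim(A/\m\otimes_A S)\ge e$ if and only if $\m\supset I_e$. Combining the two equivalences yields precisely the statement of (b) with this same ideal $I_e$.

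There is no real obstacle here; the argument is essentially a bookkeeping reduction to part (a). The only minor care needed is to confirm that $S$ indeed inherits a natural positive grading making it finitely generated over $A=S_0$, so that (a) applies — this is given in the hypotheses, since $S = A[x_1,\dots,x_n]/J$ with $J$ a homogeneous ideal (as used throughout \Cref{secGeneral}, where $J\cap A = 0$ and $J$ is generated in positive degrees in the $x_i$'s), so the standard grading by $x$-degree descends to $S$.
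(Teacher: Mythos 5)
Your proposal is correct and follows essentially the same route as the paper: identify the fiber $A/\m\otimes_A S$ with $(A/\m)[x_1,\dots,x_n]/J(A/\m)[x_1,\dots,x_n]$ and use the identity $\hgt J' + \dim T/J' = n$ in the polynomial ring over the field $A/\m$ (the paper cites Hartshorne, Theorem 1.8A, where you justify it directly) to reduce part (b) to part (a) with the same ideal $I_e$.
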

\begin{proof}
We show part (b). Fix a maximal ideal $\m$ of $A$ and write $T =  A/\m \otimes_A A[x_1,\dots,x_n] = (A/ \m)[x_1,\dots, x_n]$. Notice that $T/JT = A/ \m \otimes_A S$. 
Now, part (b) follows from the first part and the following identity (\cite[Theorem 1.8A]{Har})
\begin{equation}
\hgt J T + \dim T/ J T   = \dim T (= n).
\end{equation} 
\end{proof}

We note that {\cite[Theorem 14.8(b)]{Eis}} is stated for prime ideals of $A$. In this paper, we only consider closed points of a parameter space, so we stated part (a) for maximal ideals. 

\begin{remark}
\begin{enumerate}[\indent$(a)$] 
\item We list examples of general properties.
 \begin{itemize} 
  \item For an ideal $I = (f_1,\dots,f_l)$ of grade $\ge s$, the property of the grade of the ideal generated by $s$-general linear combinations of $f_1,\dots,f_l$ being at least $s$ is a general property. 
  
  \item For an ideal $I = (f_1,\dots,f_l)$ minimally generated by the $f_i$'s and $s \le l$, the condition that $s$-general linear combinations of $f_1,\dots,f_l$ are part of minimal generating set is a general property. 
 
\end{itemize}
 
\item An advantage of having a general property is that one may ask finitely many general properties simultaneously since a finite intersection of non-empty dense open subsets remains non-empty dense open.

\item For those who are familiar with algebraic geometry, generic freeness and generic smoothness of $\mathbb{C}$-varieties are examples of general properties \cite[Theorem 14.4]{Eis}.
\end{enumerate}
\end{remark}


\section{Defining ideals of Rees algebras and special fiber rings}\label{setupSec}
Let $R = \kk[x_1,\dots, x_n]$ be a polynomial ring in $n$ variables over a field $\kk$, 
$\m = ( x_1,\dots, x_n)R$ the homogeneous maximal ideal, 
$f_0,\dots, f_n$ homogeneous polynomials of degree $d$ in $R$, 
and $I = (f_0,\dots, f_n)$. 
Consider a homogeneous surjective $R$-linear map $\pi$ 
\begin{equation}\label{piMapDefEqReesAlg}
\pi: R[T_0,\dots,T_n] \to R[It],
\end{equation}
where $T_i \mapsto f_i t$ for $0 \le i \le n$. Since $\mathcal{F}(I) = R[It] / \m R[It]$, the map $\pi$ induces a surjective $\kk$-linear map $\overline{\pi}$ for $\mathcal{F}(I)$
\begin{equation}
\overline{\pi}: \kk[T_0,\dots,T_n] \to \mathcal{F}(I).
\end{equation}

The kernel of $\pi$ is called the \textit{defining ideal} of the Rees algebra of $I$, and 
the kernel of $\overline{\pi}$ is called the \textit{defining ideal} of the special fiber ring $\mathcal{F}(I)$. Recall that $f_0, \dots, f_n$ define a rational map 
\begin{equation}\label{parametrizationmap}
\phi: \Proj (R) = \PP^{n-1} {\xdashrightarrow{[f_0:\cdots: f_n]}}  \mathbb{P}^{n}_\kk,
\end{equation}
$V(\ker \pi)$ defines the closure of the graph of $\phi$ in $\PP^{n-1} \times \PP^{n}$, and $V(\ker \overline{\pi})$ defines the closure of the image of $\phi$ in $ \PP^n$. \\

The study of $\ker \pi$ can be simplified via the symmetric algebra of $I$. For an ideal $I$, the \textit{symmetric algebra} of $I$ is the graded ring
\begin{equation}
\Sym_R(I) := R \oplus I \oplus \Sym_R^2(I) \oplus \cdots,
\end{equation}
where $\Sym_R^k(I)$ denotes the $k$th symmetric power of $I$. By the universal property of $\Sym_R(I)$,  the homogeneous presentation $\pi$ factors through $\Sym_R(I)$
\begin{equation}\label{presSym}
\xymatrix{
R[T_0,\dots, T_n]  \ar[rr]^{\pi} \ar[rd]_{\pi'} && R[It]\\
& \Sym_R(I) \ar[ru]_{\pi''}.
}
\end{equation}
The kernel of $\pi'$ is easy to describe from the graded presentation matrix of $I$: Consider a homogeneous presentation $\varphi$ of the ideal $I$
\begin{equation}\label{presI}
\oplus^m R(-d-d_i) \stackrel{\varphi}{\to} R(-d)^{n+1} \to I \to 0,
\end{equation}
where $d_i$ are positive integers. 
Notice that $\varphi$ is a $n+1 \times m$ matrix. Then $\ker \pi' = I_1 ( [T_0 \, \dots \, T_n] \, \varphi)$. Hence by abuse of notation, we call $\mathcal{A} = (\ker \pi) \Sym_R(I)$ the defining ideal of the Rees algebra. \\

Since $R$ is a graded ring, $R[T_0,\dots, T_n]$ has a natural bi-graded structure; we set $\deg x_i = (1,0)$ and $\deg T_i = (0,1)$. 
Hence $\ker \pi$ is a bi-graded ideal of $R[T_0,\dots, T_n]$, and $\ker \overline{\pi}$ is a homogeneous ideal of $\kk[T_0,\dots, T_n]$. Write $B = R[T_0,\dots, T_n]$ and $S = \kk[T_0,\dots, T_n]$. It is often convenient to view $B$ as $B \cong R \otimes_\kk S = \kk[x_1, \dots, x_n ] \otimes_\kk \kk[T_0,\dots, T_n]$. Hence $B$ is free as an $S$-module. For a graded module $M$, $M(a)$ denotes the grade shift by $a$. That is $[M(a)]_{i} = M_{a+i}$ for any $i \in \mathbb{Z}$. Similarly, for a bigraded module $M$, $M(a,b)$ means $[M(a,b)]_{(i,j)} = M_{(a+i, b+j)}$ for any $i,j \in \mathbb{Z}$.\\

Our main theorems are stated in the same hypothesis of the following proposition. The statement (as well as its proof) is well-known. We present this proposition to fix the notation, and we will use it as a quick reference for our setup. 

\begin{prop}\label{setupProp}
Let $R = \kk[x_1,\dots, x_n]$ be a polynomial ring in $n$ variables over a field $\kk$, and $I$ an $R$-ideal, and $\m = (x_1,\dots, x_n)R$. Assume that $I$ is codimension $2$ perfect, that the degrees of the entries of the columns of a presentation matrix $\varphi$ of $I$ in \cref{presI} are $d_1 \le d_2 \le  \cdots \le d_n$, and that $\mu(I_p) \le \dim 
R_p$ for all $p \in \Spec(R) \setminus \{\m \}$. Then we have the following:
\begin{enumerate}[\indent~$(1)$]
\item The complex 
$0 \to \oplus^n R(-d_i) \stackrel{\varphi}{\to} R^{n+1} \to I (d)\to 0$
is exact.  
\item $I$ is generated in degree $d = \Sigma^n d_i$.
\item $\Sym_I (R)$ is a complete intersection.
\item 
$ \mathcal{A} = H^0_\m (\Sym_I (R)),$
 where $H^0_\m (\Sym_I (R))$ denotes the 0-th local cohomology module of $\Sym_I(R)$ with support in $\m$. 
\end{enumerate}
\end{prop}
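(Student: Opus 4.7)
The strategy is to obtain (1) and (2) from the Hilbert--Burch structure theorem, (3) from a codimension count that uses the $G_n$ hypothesis $\mu(I_p)\le\dim R_p$ for all $p\ne\m$, and (4) from a local cohomology argument exploiting the fact that $I$ is of linear type on the punctured spectrum.

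For (1), since $I$ is codimension two perfect and minimally generated by $n+1$ elements, the Hilbert--Burch theorem produces the exact sequence displayed in (1) and identifies the $f_i$, up to a unit, with the signed maximal minors of $\varphi$; one may alternatively verify acyclicity directly via \Cref{thmBE73}, noting $\rk\varphi = n$ and $\grade I_n(\varphi)=\grade I=2$. For (2), each maximal minor of $\varphi$ is a signed sum over permutations of products of one entry from each column, hence of degree $d_1+\cdots+d_n$; equating with the common degree $d$ of the $f_i$ yields $d=\sum d_i$.

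For (3), write $\Sym_R(I) = B/J$, where $J = I_1([T_0\,\cdots\,T_n]\,\varphi)$ is minimally generated by exactly $n$ bihomogeneous forms $l_1,\dots,l_n$, each of bidegree $(d_i,1)$. Since $\dim B = 2n+1$, Krull's principal ideal theorem yields $\dim \Sym_R(I)\ge n+1$, with equality iff $l_1,\dots,l_n$ form a regular sequence in $B$ (the complete intersection conclusion). The $G_n$ hypothesis on a codimension two perfect ideal is a well-known input (in the spirit of Herzog--Simis--Vasconcelos and subsequent authors) implying that $I$ is of linear type on the punctured spectrum, so $\Sym_R(I)_p\cong R_p[It]_p$ for every $p\ne\m$; this forces the desired dimension equality and completes (3).

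For (4), the linear-type property off $\m$ gives $\mathcal{A}_p=(\ker\pi'')_p=0$ for every $p\ne\m$, so every element of $\mathcal{A}$ is annihilated by some power of $\m$, i.e.\ $\mathcal{A}\subseteq H^0_\m(\Sym_R(I))$. Conversely, $R[It]\hookrightarrow R[t]$ is $R$-torsion-free and $\depth_\m R = n\ge 1$, whence $H^0_\m(R[It])=0$; any $\m$-torsion element of $\Sym_R(I)$ therefore maps to zero in $R[It]$ and hence lies in $\mathcal{A}$, giving the reverse inclusion. The main obstacle in this plan is (3): securely extracting from the $G_n$ hypothesis that $I$ is of linear type off $\m$ (equivalently, that $J$ has codimension exactly $n$ in $B$) is the technical core of the proposition, and once this is in hand the remaining parts fall out cleanly.
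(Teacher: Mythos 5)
Your proposal is correct, and for parts (1) and (2) it coincides with the paper's proof (Hilbert--Burch, \cite[Theorem 20.15]{Eis}). For (3) and (4) you take a more self-contained route than the paper, which simply asserts the fact $\dim \Sym_R(I)=n+1$ and cites \cite[Theorem 2.6]{HSV1} for $\mathcal{A}=H^0_\m(\Sym_R(I))$. You instead reduce everything to the statement that $I$ is of linear type on the punctured spectrum: this gives $\mathcal{A}_p=0$ for $p\neq\m$, from which your torsion argument for (4) is the standard one (the reverse inclusion via torsion-freeness of $R[It]\subset R[t]$ is fine; the reference to $\depth_\m R$ is unnecessary), and it also yields the missing inequality $\dim\Sym_R(I)\le n+1$ for (3). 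Two small points deserve to be made explicit. First, the linear-type claim is itself a theorem from the same Herzog--Simis--Vasconcelos circle of ideas you would otherwise be citing: a codimension two perfect ideal is strongly Cohen--Macaulay, and a strongly Cohen--Macaulay ideal satisfying $G_\infty$ locally (which the hypothesis $\mu(I_p)\le\dim R_p$ gives at every $p\neq\m$) is of linear type; so your proof is not actually citation-free, it just relocates the citation. Second, your phrase ``this forces the desired dimension equality'' compresses a step: from $\mathcal{A}_p=0$ for $p\neq\m$ you get $\operatorname{Supp}_{\Sym}\mathcal{A}\subseteq V(\m\Sym_R(I))$, and then $\dim\Sym_R(I)=\max\{\dim R[It],\,\dim_{\Sym}\mathcal{A}\}\le\max\{n+1,\,\dim \Sym_R(I)/\m\Sym_R(I)\}=n+1$ since $\Sym_R(I)/\m\Sym_R(I)$ is a polynomial ring in $\mu(I)=n+1$ variables over $\kk$; alternatively the Huneke--Rossi dimension formula gives $\dim\Sym_R(I)=n+1$ directly from the $G_n$ hypothesis. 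With these points spelled out, your argument is a valid and slightly more explicit variant of the paper's proof.
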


\begin{proof}
$(1)$ and $(2)$ follow from the Hilbert-Burch theorem \cite[Theorem 20.15]{Eis} since $I$ is a codimension two perfect graded ideal, $(3)$ follows from the fact that $\dim \Sym_R(I) = n + 1$ and $\ker \pi'$ in \cref{presSym} is generated by $n$ elements, and $(4)$ follows from \cite[Theorem 2.6]{HSV1}. 
\end{proof}

\begin{remark}\label{Gs}
For a ring and an ideal $I$, we say that $I$ satisfies the condition $(G_s)$ if  
$\mu(I_p) \le \dim 
R_p$ for all $p \in \Spec(R)$ such that $\dim R_p \lneq s$. Hence the ideal $I$ in \Cref{setupProp} satisfies $(G_{\dim R})$ condition. There are plenty of ideals satisfying this condition, e.g., complete intersection ideals, (homogeneous) ideals primary to the homogeneous maximal ideal, and this condition can be checked with the Fitting ideals of $I$ (that is $I$ satisfies $(G_s) \iff$ $\hgt\operatorname{F}_i(I) \ge i + 1$ for $1 \le i \le s-1$).
\end{remark}

\begin{remark}\label{pieceRes}
With the notation and hypothesis of \cref{setupProp}, let $l_1, \dots, l_n$ be in $B = R[T_0,\dots, T_n]$ such that $[ \, l_1 \, \dots \, l_n] = [\, T_0 \, \dots \, T_n] \, \varphi$; hence $\Sym_R(I) = B / (l_1, \dots, l_n)$, and $l_1, \dots, l_n$ form a bi-homogeneous regular sequence in $B$. Then the Koszul complex $\mathcal{K}:= \mathcal{K} ( \underline{l_i} ; B)$ is a bi-graded $B$-resolution for $\Sym_R(I)$; 
\begin{equation}
\mathcal{K}: 0 \to B( -\Sigma^n d_i, -n) \to  \cdots \to \oplus^n B(-d_i, -1) \to B. 
\end{equation}

Since $B$ is a free graded $S = k[T_0,\dots, T_n]$-module, $\mathcal{K}$ is a graded free $S$-resolution of $\Sym_R(I)$. 
Each component of $\mathcal{K}$ is not of finite rank as an $S$-module. However, for each $k \in \mathbb{Z}$, $\mathcal{K}_k := \mathcal{K}_{(k,*)}$ is a finite free graded $S$-resolution for $[\Sym_R(I)]_{(k,*)}$, and each component of $\mathcal{K}_k$ is of finite rank as $S$-module;
\begin{equation}\label{koszulSk}
\mathcal{K}_k : 0 \to F_n \stackrel{}{\to} \cdots \to F_1 \stackrel{}{\to} F_0,
\end{equation} 
where $F_i=\bigoplus_{1\leq j_1\leq j_2\leq\cdots\leq j_i\leq n}S(-i)^{k-(d_{j_1}+\cdots+d_{j_i})+n-1\choose n-1}$. Furthermore, these $S$-resolutions are linear resolutions, i.e., the non-zero entries of the differential maps are of degree 1. \\
\end{remark}

Recall the notation that for a bigraded module $M = \oplus_{i,j \in \mathbb{Z}} M_{i,j}$ in $B$, $M_i := \oplus_{j \in \mathbb{Z}} M_{i,j}$.
Hence $\Sym(I)_k=[\Sym_R(I)]_{(k,*)}$ and $\mathcal{A}_k=[\mathcal{A}]_{(k,*)}$. In the sequel, when we use a single grading for $B = R \otimes S$, we will always follow this convention.

\begin{thm}[{Grothendieck, \cite[Section 3.6]{Jou97}, \cite[Theorem 2.4]{KPU17} }]\label{perfPair}
For each $i \in \{ 0,\dots, \delta \}$, there exists an isomorphism of finitely generated graded $S$-modules
\begin{equation}
\mathcal{A}_i\cong \Hom_S (\Sym(I)_{\delta-i},S(-n)),
\end{equation}
where $\delta= d-n = d_1 + \dots + d_n - n$. 
\end{thm}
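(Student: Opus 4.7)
The approach is to apply graded local duality for the relative polynomial extension $S \hookrightarrow B = S[x_1,\dots,x_n]$ to the bigraded $B$-module $\Sym_R(I)$. The starting point is \Cref{setupProp}(4), which identifies $\mathcal{A}=H^{0}_{\m}(\Sym_R(I))$, together with the bigraded complete intersection presentation $\Sym_R(I)=B/(l_1,\dots,l_n)$ of codimension $n$ in $B$ (with $\deg l_i=(d_i,1)$), giving a Koszul resolution of $\Sym_R(I)$ over $B$.

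Because $B$ is a polynomial $S$-algebra of relative dimension $n$, its relative dualizing module is $\omega_{B/S}\cong B(-n,0)$, with the shift only in the $R$-grading. Graded local duality then yields a natural bigraded isomorphism
\begin{equation*}
H^{0}_{\m}(M)\;\cong\;\Hom_S^{*}\bigl(\Ext^{n}_{B}(M,\omega_{B/S}),\,S\bigr)
\end{equation*}
for any finitely generated bigraded $B$-module $M$, where $\Hom_S^{*}$ denotes the graded $S$-dual. I will then compute $\Ext^{n}_{B}(\Sym_R(I),B)$ by dualizing the Koszul complex on $(l_1,\dots,l_n)$; its self-duality up to the shift $(d,n)$, where $d=d_1+\cdots+d_n$, produces $\Ext^{n}_{B}(\Sym_R(I),B)\cong\Sym_R(I)(d,n)$, and absorbing the $(-n,0)$ from $\omega_{B/S}$ yields $\Ext^{n}_{B}(\Sym_R(I),\omega_{B/S})\cong\Sym_R(I)(\delta,n)$ with $\delta=d-n$.

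Substituting back, $\mathcal{A}\cong\Hom_S^{*}(\Sym_R(I)(\delta,n),S)$. The last step is to isolate the $R$-degree-$i$ summand: a bidegree-$(a,b)$ $S$-linear map into $S$ (which lives in $R$-degree $0$) can be nonzero only on the $R$-degree $-a$ part of the source, and tracking the $(\delta,n)$ shift singles out $\Sym_R(I)_{\delta-i}$ while absorbing the $S$-degree shift as a twist $S(-n)$ of the target, producing $\mathcal{A}_i\cong\Hom_S(\Sym_R(I)_{\delta-i},S(-n))$, as claimed. The main obstacle is that the base $S$ is a polynomial ring rather than a field, so textbook graded local duality does not apply verbatim; one must either set up the relative version for polynomial $S$-algebras carefully, coordinating the three shifts $(d,n)$, $(-n,0)$, and the degree reversal from the graded $S$-dual, or, as a self-contained substitute, run the \v{C}ech complex $\check{C}^{\bullet}(x_1,\dots,x_n;-)$ against the Koszul resolution of $\Sym_R(I)$ and use the vanishing $H^{j}_{\m}(B)=0$ for $j\neq n$ together with the explicit identification $H^{n}_{\m}(B)\cong\Hom_S^{*}(B(-n,0),S)$ to recover the Hom formula one bidegree at a time.
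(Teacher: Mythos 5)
The paper itself gives no proof of \Cref{perfPair}---it is quoted from Jouanolou and Kustin--Polini--Ulrich---so the comparison is with the arguments in those sources. Your computational core (Koszul self-duality giving $\Ext^n_B(\Sym_R(I),B)\cong\Sym_R(I)(d,n)$, together with the identification of $H^n_{\m}(B)$ with the degreewise $S$-dual of $B$ twisted by $(-n,0)$) is essentially that argument, and the ``self-contained substitute'' you mention at the end---the \v{C}ech complex on $x_1,\dots,x_n$ run against the Koszul resolution of \Cref{pieceRes}, using $H^j_{\m}(B)=0$ for $j\neq n$---is exactly the standard proof; what Jouanolou adds beyond this is the explicit form of the isomorphism via Morley forms, which the paper exploits later but which the statement itself does not require.

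The one genuine flaw is the blanket claim that $H^0_{\m}(M)\cong\Hom_S^{*}\bigl(\Ext^n_B(M,\omega_{B/S}),S\bigr)$ for \emph{every} finitely generated bigraded $B$-module $M$. Relative local duality over the base $S$ is a derived-category statement, and since $\Hom_S(-,S)$ is not exact the module-level isomorphism fails in general: for $n=1$, $S=\kk[T]$, $B=S[x]$, $M=B/(x^2,xT)$ one has $H^0_{(x)}(M)=M\cong S\oplus S/T$, whereas $\Hom_S\bigl(\Ext^1_B(M,B(-1)),S\bigr)\cong S$; the torsion summand is only seen by an $\Ext^1_S$ of $\Ext^2_B(M,\omega_{B/S})$ in the duality spectral sequence. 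Your argument is rescued precisely because $\Sym_R(I)=B/(l_1,\dots,l_n)$ is a complete intersection, hence a perfect $B$-module of codimension $n$: $\Ext^j_B(\Sym_R(I),B)=0$ for $j\neq n$, equivalently the Koszul resolution has length exactly $n$, so the spectral sequence collapses. Concretely, $\Ext^n_B(\Sym_R(I),\omega_{B/S})$ is the cokernel at the right-hand end of the dualized Koszul complex, and left-exactness of $\Hom_S(-,S)$ identifies its degreewise $S$-dual with the kernel computing $H^0_{\m}$, which is how the twist $S(-n)$ and the strand $\Sym(I)_{\delta-i}$ come out as you describe. Either add this perfection hypothesis where you invoke the duality, or simply promote your \v{C}ech-against-Koszul computation to be the proof; with that adjustment the argument is complete.
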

In \cite[Section 3.6]{Jou97}, Jouanolou describes the isomorphism in terms of Morley forms (for instance, see \cite[Chapter 4]{KPU17}).
Therefore, an explicit computation of $\Hom_S (\Sym(I)_{\delta-i},S(-n))$ leads to a generating set, not only their bidegrees, of $\mathcal{A}_i$. 
In the following two sections, we study $\mathcal{A}_i$ and give a generalized method to compute a generating set for $\mathcal{A}_i$.

\section{The dual generators of the defining ideal}\label{sectiononfunctionsinA1} 
We will adapt the notation in \Cref{setupSec} and the notation and hypothesis of \Cref{setupProp}. 
The graded free $S$-resolution of  $\Sym(I)_{\delta-i}$ in \Cref{pieceRes} is
\begin{equation}\label{delta-ires} 
\mathcal{K}_{\delta-i} : 0 \to F_m \stackrel{\alpha_m}{\to} \cdots \to F_1 \stackrel{\alpha_1}{\to} F_0,
\end{equation} 
where $F_t=\bigoplus_{1\leq j_1\leq j_2\leq\cdots\leq j_i\leq n}S(-t)^{\delta-i-(d_{j_1}+\cdots+d_{j_i})+n-1\choose n-1}$ for $t = 0,\dots, m$.
Let $r_t=\rk\alpha_t$ and $f_t=\rk F_t$. By \Cref{thmBE73}(1), $f_t = r_t + r_{t+1}$ for $t = 1,\dots, m$, 
and by \Cref{perfPair}, $\mathcal{A}_i \cong \Hom_S(\Sym(I)_{\delta-i},S(-n))=  \ker\alpha_i^*(-n)$. 
For the following construction, it is worth mentioning that the above minimal graded resolution is linear. \\

\begin{remark}\label{reviewBE74}
In order to apply the theorems in \cite{BE74} to our set up, we need to specify shifts. It is not a hard task to do, but for the convenience of the reader and to set up the notation, we review their construction below. We also note that their constructions are for projective modules, but in our paper we only need their theorems for (graded) free modules. Hence our review is written for \textit{graded} free modules of their work.\\

\begin{enumerate}[\indent$(1)$]
\item  
 First for $0\leq t\leq m$, fix a basis for $F_t$. This enables us to have a matrix representation of $\alpha_t$ and $\wedge^{r_t}\alpha_t$ for $1\leq t\leq m$, respectively.
 Henceforth whenever we talk about these maps, we use their matrix representations. We  use \Cref{BEprelim} on \cref{delta-ires} to construct maps $a_t$ for $t=1,\dots,m$. Since $\alpha_m:F_m\rightarrow F_{m-1}$ is an injective map, 
 the entries of $a_m=\wedge^{r_m}\alpha_m=\wedge^{f_m}\alpha_m$ are of degree $r_m$. 
 Thus $a_m:S(-r_m m))\rightarrow \wedge^{r_m}F_{m-1}$. (Recall that since \cref{delta-ires} is a linear resolution, the shift of $F_t$ is $-t$ for all $t = 0,\dots,m$.)
Using \Cref{BEprelim}, we have have the commutative diagram
 \begin{equation}
 \xymatrix{
 	\wedge^{r_{m-1}}F_{m-1}\ar[rr]^{\wedge^{r_{m-1}}\alpha_{m-1}}\ar[rd]_{a_m^*}	& & \wedge^{r_{m-1}}F_{m-2}\\
 	& S(-r_{m-1}(m-1) + r_m)\ar[ru]_{a_{m-1}}&
 }
 \end{equation}
 Notice that $a_m^*$ is a row matrix whereas $a_{m-1}$ is a column matrix; 
 \begin{align}
 a_{m}^*=[a^*_{m,1}\cdots a^*_{m,{f_{m-1}\choose r_{m}}}] \qquad a_{m-1}=\begin{bmatrix}
 a_{m-1,1}\\
 \vdots\\
 a_{m-1, {f_{m-2}\choose r_{m-1}}}
 \end{bmatrix}.
 \end{align}

 Then one has the factorization 
 \begin{equation}
 \wedge^{r_{m-1}}\alpha_{m-1}=a_{m-1}\circ a_m^*.
 \end{equation}
 Not all of the entries of $a^*_{m}$ is zero as $a_m^*=(\wedge^{r_m}\alpha_m)^*$ and $\rk\alpha_m=r_m$. Thus there exists an entry, say $a^*_{m,u}\neq 0$. 
 Now to compute $a_{m-1}$, consider the $u$-th column of $\wedge^{r_{m-1}}\alpha_{m-1}$ and divide its entries by $a^*_{m,u}$. From this we deduce that the entries of $a_{m-1}$ are of degree $r_{m-1}-r_m$ and that $a_{m-1}:S(-r_{m-1}(m-1)+r_m)\rightarrow\wedge^{r_{m-1}}F_{m-2}$. 
Iteratively, one can see that 
 $a_1:S(-\sum_{p=1}^m(-1)^{p-1}r_p )\to \wedge^{r_1}F_0$.

\item Let $s_t=r_t \cdot t - \sum_{p=t+1}^m (-1)^{p-t-1}r_p$. Then $a_t:S(-s_t )\rightarrow \wedge^{r_{t}} F_{t-1}$ for $t=1,\dots,m$. 
In particular, $s_1 = r_1 - r_2 + \cdots +(-1)^{m-1}r_m = \sum_{p=1}^m (-1)^{p-1} r_p$.

\item The map $\alpha_1:F_1\rightarrow F_0$ induces the map $F_0^*\otimes F_1\rightarrow S$. By dualizing it, we obtain the map $\widetilde{\alpha_1}:S\rightarrow F_0\otimes F_1^*$. Consider the map
\begin{equation}\label{mapDelta}
\partial:\wedge^{f_0-1}F_0\rightarrow\wedge^{f_0}F_0\otimes F_1^*,
\end{equation}
which is the composition of the following maps
\begin{equation}
\wedge^{f_0-1}F_0 \cong \wedge^{f_0-1}F_0\otimes S\xrightarrow{\ide\otimes \widetilde{\alpha_1}}\wedge^{f_0-1}F_0\otimes F_0\otimes F_1^*\xrightarrow{m_\wedge \otimes \ide}\wedge^{f_0}F_0\otimes F_1^*,
\end{equation}
where $m_\wedge:\wedge^{f_0-1}F_0\otimes F_0\rightarrow \wedge^{f_0}F_0$ is the usual multiplication in the exterior algebra $\wedge F_0$.
Fix an orientation $\eta$ for $F_0$ and let $a_1:S(-s_1) \rightarrow \wedge^{r_1}F_0$ be the map in \Cref{BEprelim}(2). This map $\partial$ is used in the proof of \Cref{thm1}.
\end{enumerate}
\end{remark}

\begin{thm}\label{thm1}
With the hypothesis of  \Cref{setupProp}, for any $i \in \{ 0, \dots, \delta \}$ and $\alpha_1$ in the graded free $S$-resolution of $\Sym_R (I)_{\delta-i}$ in \cref{delta-ires}, 
we have the following statements.
\begin{enumerate}[\indent$(1)$]
\item 
The following is a complex of graded free $S$-modules
\begin{equation}
\wedge^{f_0-r_1-1}F_0\otimes S(-s_1)\xrightarrow{\eta\circ m_\wedge\circ(\ide\otimes a_1)}F_0^*\xrightarrow{\alpha_1^*}F_1^*,
\end{equation}
where $a_1$ is the map in \Cref{BEprelim}. In particular, if $f_0 = r_1 + 1$, then 
the image of $a_1$ is in $\ker \alpha_1^*$. 

\item In addition, if $\rk\Sym(I)_{\delta-i}>1$, then  
\begin{equation}
\wedge^{f_0-r_1-1}F_1\otimes S(-s_1)\xrightarrow{\wedge^{f_0-r_1-1}\alpha_1\otimes \ide}\wedge^{f_0-r_1-1}F_0\otimes S(-s_1)\xrightarrow{\eta\circ m_\wedge\circ(\ide\otimes a_1)}F_0^*\xrightarrow{\alpha_1^*}F_1^*
\end{equation}
is a complex of graded free $S$-modules.
\end{enumerate}
\end{thm}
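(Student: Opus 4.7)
The plan is to reduce both (1) and (2) to the single identity
\begin{equation*}
\alpha_1(\beta)\wedge a_1(1)=0\quad\text{in }\wedge^{r_1+1}F_0,\qquad\text{for every }\beta\in F_1,\tag{$\ast$}
\end{equation*}
which is the algebraic manifestation of the rank of $\alpha_1$ being $r_1$. Under the identification $F_0^*\cong\wedge^{f_0-1}F_0$ induced by the orientation $\eta$, the composition at the right end of the sequence sends $\omega\in\wedge^{r-1}F_0$ (where $r=f_0-r_1$) to the functional $\beta\mapsto\eta(\alpha_1(\beta)\wedge\omega\wedge a_1(1))$. Sliding $\alpha_1(\beta)$ past $\omega$ introduces a sign and exposes the factor $\alpha_1(\beta)\wedge a_1(1)$, which vanishes by $(\ast)$; this gives (1). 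For the additional composition in (2), it is enough to work on a decomposable element $\beta_1\wedge\cdots\wedge\beta_{r-1}\in\wedge^{r-1}F_1$: its image in $\wedge^{f_0-1}F_0$ is
\begin{equation*}
\alpha_1(\beta_1)\wedge\cdots\wedge\alpha_1(\beta_{r-1})\wedge a_1(1),
\end{equation*}
and the last two factors combine to zero by $(\ast)$, so the whole expression vanishes.

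To prove $(\ast)$, I would use that $S$ is an integral domain and $\wedge^{r_1+1}F_0$ is $S$-free, so it suffices to verify the identity after passage to the field of fractions $Q=\Quot(S)$. By \Cref{thmBE73} applied to the resolution \cref{delta-ires}, we have $\rk\alpha_1=r_1$, and hence $V:=\ima(\alpha_1)\otimes_S Q\subset(F_0)_Q$ is a free $Q$-module of rank $r_1$. Fix a basis $g_1,\dots,g_{f_1}$ of $F_1$ and choose indices $i_1<\dots<i_{r_1}$ for which $\alpha_1(g_{i_1}),\dots,\alpha_1(g_{i_{r_1}})$ are $Q$-linearly independent, hence form a $Q$-basis of $V$. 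The factorization $\wedge^{r_1}\alpha_1=a_1\circ a_2^*$ from \Cref{BEstatementprelim}(2) (with the convention $a_2^*=\ide$ when $m=1$) applied to $g_{i_1}\wedge\cdots\wedge g_{i_{r_1}}$ yields
\begin{equation*}
\alpha_1(g_{i_1})\wedge\cdots\wedge\alpha_1(g_{i_{r_1}})=a_2^*(g_{i_1}\wedge\cdots\wedge g_{i_{r_1}})\cdot a_1(1).
\end{equation*}
The left-hand side is nonzero in $\wedge^{r_1}(F_0)_Q$ by the choice of indices, so the scalar $a_2^*(g_{i_1}\wedge\cdots\wedge g_{i_{r_1}})\in Q$ is invertible, making $a_1(1)$ a $Q$-multiple of a wedge of $r_1$ basis vectors of $V$. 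Since $\alpha_1(\beta)\in V$ for every $\beta\in F_1$, wedging $\alpha_1(\beta)$ against this product lies in $\wedge^{r_1+1}V=0$, which proves $(\ast)$.

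The main obstacle is exactly $(\ast)$: one must convert the rank condition on $\alpha_1$ into an identity about the Buchsbaum--Eisenbud multiplier $a_1$, and the factorization of \Cref{BEstatementprelim}(2) is precisely the bridge that makes this possible. The graded shift $s_1$ requires no additional work: by \Cref{reviewBE74}(2) the entries of $a_1$ are homogeneous of degree $s_1$, so both $\eta\circ m_\wedge\circ(\ide\otimes a_1)$ and $\wedge^{r-1}\alpha_1\otimes\ide$ are degree-zero maps of graded free $S$-modules of the correct shifts, yielding the asserted complexes of graded free $S$-modules.
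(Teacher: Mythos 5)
Your proof is correct, but it takes a genuinely different route from the paper's. The paper deduces that the composite with $\wedge^{r_1}\alpha_1$ in place of $a_1$ vanishes by citing Lemma 3.2(a),(c) of Buchsbaum--Eisenbud (which identifies the map $\partial$ with $\alpha_1^*$ under the orientation), and then passes from $\wedge^{r_1}\alpha_1$ to $a_1$ via the factorization $\wedge^{r_1}\alpha_1=a_1\circ a_2^*$ together with a cancellation argument: the ideal $J=\ima a_2^*$ has positive grade by \Cref{BEstatementprelim}(3), so $J\cdot(\alpha_1^*\circ\eta\circ m_\wedge\circ(\ide\otimes a_1))=0$ inside the free module $F_1^*$ forces the composite itself to vanish. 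You instead unwind the orientation identification by hand, reduce both parts to the single identity $\alpha_1(\beta)\wedge a_1(1)=0$ in $\wedge^{r_1+1}F_0$, and prove that identity over $\Quot(S)$: the same factorization, applied to a wedge of $r_1$ basis vectors of $F_1$ whose images are independent, exhibits $a_1(1)$ as a nonzero scalar multiple of a wedge of a basis of $\ima(\alpha_1)_Q$, and the vanishing follows from a dimension count, then descends to $S$ by torsion-freeness. Your handling of the edge case $m=1$ via the convention $a_2^*=\ide$ is consistent with \Cref{BEstatementprelim}(1), and the grading claim via \Cref{reviewBE74}(2) is fine. The trade-off is worth noting: your argument is more elementary and makes transparent \emph{why} the complex exists ($a_1(1)$ is essentially the wedge of a basis of the image of $\alpha_1$), but it uses that $S$ is a domain, whereas the paper's positive-grade cancellation, in the spirit of \cite{BE74}, would survive over an arbitrary Noetherian base; in the present setting $S=\kk[T_0,\dots,T_n]$ both apply, so this is a matter of generality and taste rather than of correctness.
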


\begin{proof}
\noindent (1): \cite[Lemma 3.2(a)]{BE74} (the map $\partial$ in \cref{mapDelta} is equal to $d_{f_0-r_1-1}^{\, \alpha_1}$) implies that the composition 
\begin{equation}\label{lem3.2a}
\wedge^{f_0-r_1-1}F_0\otimes \wedge^{r_1}F_1\xrightarrow{m_\wedge\circ(\ide\otimes\wedge^{r_1}\alpha_1)}\wedge^{f_0-1}F_0\xrightarrow{\partial}\wedge^{f_0}F_0\otimes F_1^*
\end{equation}
is zero.\\

We identify $\wedge^{f_0-1}F_0 \cong F_0^*$ and $\wedge^{f_0}F_0\cong S$ following the fixed orientation $\eta$ of $F_0$. 
Lemma 3.2(c) in \cite{BE74} implies that the following diagram commutes up to sign: 
\begin{equation}\label{lem3.2c}
\xymatrix{
	\wedge^{f_0-1}F_0 \ar[rr]^{\partial} \ar[d]_{\eta} && \wedge^{f_0}F_0\otimes F_1^* \ar[d]^{\eta \otimes \ide}\\
	\wedge^1 F_0^* \ar[rr]^{(m_\wedge\circ(\ide\otimes \alpha_1))^*} &&  \wedge^0 F_0^* \otimes F_1^*.
}
\end{equation}
Notice that in the above diagram, the vertical maps are isomorphisms and the bottom map, i.e., $(m_\wedge\circ (1\otimes \alpha_1))^*$ is a composition of the following maps
\begin{align}
\wedge^1F_0^*\xrightarrow{m_\wedge^*}\wedge^0F_0^*\otimes \wedge^1F_0^* \xrightarrow{(\ide\otimes \alpha_1)^*}\wedge^0F_0^*\otimes \wedge^1F_1^*.
\end{align}
Since $\wedge^0F_0^* \cong S$, the map $(m_\wedge\circ (1\otimes \alpha_1))^*$ can be identified with the map
\begin{equation}\label{lemAlpha}
F_0^* \stackrel{\alpha_1^*}{\longrightarrow} F_1^*.
\end{equation}

From \eqref{lem3.2a}, \eqref{lem3.2c}, and \eqref{lemAlpha}, we conclude that 
\begin{equation}\label{seqTop}
\wedge^{f_0-r_1-1}F_0\otimes \wedge^{r_1}F_1\xrightarrow{\eta\circ m_\wedge\circ(\ide\otimes\wedge^{r_1}\alpha_1)}F_0^*\xrightarrow{\alpha_1^*} F_1^*
\end{equation}
is a complex. Thus $\ima (\eta\circ m_\wedge\circ(\ide\otimes \wedge^{r_1}\alpha_1))\subseteq\ker\alpha_1^*$.\\

With the commutative diagram in Theorem \ref{BEprelim}(a)
\begin{equation}
\xymatrix{
	\wedge^{r_1}F_1\ar[rd]_{a_2^*}\ar[rr]^{\wedge^{r_1}\alpha_1} & &\wedge^{r_1}F_0\\
	& S(-s_1),\ar[ur]_{a_1} &	
}
\end{equation}
the first morphism in \cref{seqTop} factors as follows
\begin{equation}\label{thm1commdiag2}
\xymatrix{
	\wedge^{f_0-r_1-1} F_0 \otimes \wedge^{r_1} F_1\ar[rr]^{\eta\circ m_\wedge\circ (\ide\otimes \wedge^{r_1}\alpha_1)} \ar[d]_{\ide\otimes a_{2}^*} && F_0^*\ar[r]^{\alpha_1^*} & F_1^*
	\\
	\wedge^{f_0-r_1-1}F_0\otimes S(-s_1)  \ar[r]^{\ide \otimes a_1}& \wedge^{f_0-r_1-1}F_0 \otimes \wedge^{r_1} F_0 \ar[r]^{\qquad m_\wedge}  & \wedge^{f_0-1}F_0 \ar[u]^{\eta}.
}
\end{equation}
 Now let $J=\ima a_2^*$ be an $S$-ideal (with a shift). Then $\ima \ide\otimes a_2^*=\wedge^{f_0-r_1-1}F_0\otimes J$. 
By \Cref{BEprelim}(c), $\sqrt{I(\alpha_2)}=\sqrt{I(a_2)}$. So, the ideal $J$ has a positive grade. 
Since the composition of maps in the top row is zero and  the above diagram commutes, we have 
 \begin{align}\label{thm1ProofRepeat}
 0&=(\alpha_1^*\circ \eta\circ m_\wedge\circ(\ide\otimes \wedge^{r_1}\alpha_1))(\wedge^{f_0-r_1-1}F_0\otimes \wedge^{r_1}F_1)\\
 &=(\alpha_1^*\circ \eta\circ m_\wedge\circ(\ide\otimes a_1)\circ(\ide\otimes a_2^*))(\wedge^{f_0-r_1-1}F_0\otimes \wedge^{r_1}F_1)\\
 &=(\alpha_1^*\circ \eta\circ m_\wedge\circ(\ide\otimes a_1))\circ(\ide\otimes a_2^*)(\wedge^{f_0-r_1-1}F_0\otimes \wedge^{r_1}F_1)\\
  &=(\alpha_1^*\circ \eta\circ m_\wedge\circ(\ide\otimes a_1))(\wedge^{f_0-r_1-1}F_0\otimes J(-s_1))\\
  &=J(\alpha_1^*\circ \eta\circ m_\wedge\circ(\ide\otimes a_1))(\wedge^{f_0-r_1-1}F_0\otimes S(-s_1)).
 \end{align}
 The last equality holds as the maps are $S$-module homomorphisms and tensor products are over $S$. Since the last equality holds in the free module $F_1^*$, and $J$ has positive grade, $\alpha_1^*\circ \eta\circ m_\wedge\circ(\ide\otimes a_1)=0$. Thus, 
 \begin{equation}
 \wedge^{f_0-r_1-1}F_0\otimes S(-s_1)\xrightarrow{\eta\circ m_\wedge\circ(\ide\otimes a_1)}F_0^*\xrightarrow{\alpha_1^*}F_1^*
 \end{equation}
 is a complex of graded free $S$-modules. \\
 
\noindent (2):
By item (1), it suffices to show that $[\eta\circ m_\wedge\circ (\ide\otimes a_1)]\circ [\wedge^{f_0-r_1-1}\alpha_1\otimes \ide_S ]=0$. We first extend the complex in the top row of \cref{thm1commdiag2} to 
\begin{equation}\label{cor1complex1}
	\wedge^{f_0-r_1-1}F_1\otimes \wedge^{r_1}F_1\xrightarrow{\wedge^{f_0-r_1-1}\alpha_1\otimes \ide_1}\wedge^{f_0-r_1-1}F_0\otimes \wedge^{r_1}F_1\xrightarrow{\eta\circ m_\wedge\circ(\ide\otimes\wedge^{r_1}\alpha_1)}F_0^*\xrightarrow{\alpha_1^*} F_1^*.
	\end{equation}
Since \cref{delta-ires} is a free $S$-resolution for $\Sym(I)_{\delta-i}$, we have $\rk\Sym(I)_{\delta-i}= \rk F_0 - \rk \alpha_1 = f_0-r_1 > 1$ by the hypothesis. Equivalently, we have $f_0 -1 > r_1$.  
Hence from
\begin{equation}
	\eta\circ m_\wedge\circ(\ide\otimes\wedge^{r_1}\alpha_1)\circ(\wedge^{f_0-r_1-1}\alpha_1\otimes \ide_1)=\eta\circ m_\wedge\circ (\wedge^{f_0-r_1-1}\alpha_1\otimes \wedge^{r_1}\alpha_1)=\eta\circ \wedge^{f_0-1}\alpha_1 = 0,
	\end{equation}
we conclude that \cref{cor1complex1} is a complex.\\

With the factorization $\wedge^{r_1} \alpha_1 = a_1 \circ a_2^*$,  we obtain the following a commutative diagram 
\begin{equation}
 \xymatrix{
\wedge^{f_0-r_1-1}F_1\otimes \wedge^{r_1}F_1\ar[rr]^{\hspace{1cm}\wedge^{f_0-r_1-1}\alpha_1\otimes \ide_1\hspace{1cm}}\ar[d]^{\ide_2\otimes a_2^*}&&\wedge^{f_0-r_1-1}F_0\otimes \wedge^{r_1}F_1\ar[rr]^{\hspace{1cm}\eta\circ m\circ(\ide\otimes\wedge^{r_1}\alpha_1)}& \hspace{1cm}&F_0^*\ar[r]^{\alpha_1^*} &F_1^*.\\
	 	\wedge^{f_0-r_1-1}F_1\otimes S(-s_1)\ar[rr]^{\wedge^{f_0-r_1-1}\alpha_1\otimes \ide_S}&&\wedge^{f_0-r_1-1}F_0\otimes S(-s_1)\ar[rru]_{\eta\circ m_\wedge\circ(\ide\otimes a_1)}& \hspace{1cm}& &
	 }
	 \end{equation}	 
Let $J=\ima a_2^*$ be an $S$-ideal. Then the argument used to prove Item (1) (from \cref{thm1ProofRepeat}) shows that $\eta\circ m_\wedge\circ(\ide\otimes a_1)\circ \wedge^{f_0-r_1-1}\alpha_1\otimes \ide_S=0$. 
This complete the proof of the theorem.
\end{proof}

We present two examples demonstrating \Cref{thm1}(b). It is worth mentioning that in the first example, the corresponding complex is exact whereas in the second example, it is not. 

\begin{example}\label{thm1Example}
\begin{enumerate}[\indent$(a)$]
\item (cf. \Cref{mainThm})
	Let $R=\kk[x,y,z]$ and $I=I_3(\varphi)$, where 
	\begin{equation}
	\varphi=\begin{bmatrix}
	x & y^2 & z^3\\
	y & z^2 & yz^2\\
	z & x^2 & y^3\\
	0 & xy+yz+xz & 0
	\end{bmatrix}.
	\end{equation}
It is easy to verify that this example satisfies the hypothesis of \Cref{setupProp}.
	Consider $B=R[T_0,T_1,T_2,T_3]$ and let $\mathcal{K}$ denote the Koszul complex on $l_1,l_2,l_3$ where $[l_1~l_2~l_3]=[T_0~T_1~T_2~T_3]\cdot\varphi$. Since $\mathcal{K}$ is a graded free resolution of $\Sym(I)$, we can extract the $S$-resolution of $\Sym(I)_1$
	\begin{equation}
	0\rightarrow S(-1)\xrightarrow{\alpha_1}S^3\rightarrow\Sym(I)_1\rightarrow 0,
	\end{equation}
	where $\alpha_1 = \begin{bmatrix}T_0&T_1&T_2\end{bmatrix}^t$.
	Here, we have $f_0 = 3, f_1=1, s_1 = r_1=1$. \Cref{thm1}(b) shows that 
\begin{equation} 
\wedge^1F_1\otimes S(-s_1)\xrightarrow{\wedge^1\alpha_1\otimes \ide} \wedge^1 F_0\otimes S(-s_1)\xrightarrow{m_\wedge\circ(\ide\otimes a_1)}\wedge^1F_0\cong^\eta F_0^*\xrightarrow{\alpha_1^*}F_1^*
\end{equation}
 is a complex. For this example, we will verify that $\alpha_1^*\circ \eta\circ m_\wedge\circ(\ide\otimes a_1)=0$ and $\eta\circ m_\wedge\circ(\ide\otimes a_1)=0\circ \wedge^1\alpha_1\circ\ide=0$.\\
	
	Fixing a basis for $F_0,F_1$ as $\{e_1,e_2,e_3\},\{g_1\}$ respectively, we can fix a basis $\{e_1\wedge e_2,e_1\wedge e_3,e_2\wedge e_3 \}$, $\{g_1\}$ for $\wedge^1F_0$ and $\wedge^1F_1$, respectively. A basis $\{e_1\otimes g_1,e_2\otimes g_1,e_3\otimes g_1 \}$ of $\wedge^1F_0\otimes \wedge^1F_1$ gives us a following matrix representation of $\eta\circ m_\wedge\circ(\ide\otimes a_1)$
	\begin{equation}\label{rk2Example}
	\begin{bmatrix}
		0 & T_2 & -T_1\\
		-T_2 & 0 & T_0\\
		T_1 & -T_0 & 0
	\end{bmatrix}.
	\end{equation}
As $\wedge^1\alpha_1\otimes \ide \cong \alpha_1$ and $\alpha_1^*$ is the transpose $\alpha_1$, we have 
	\begin{align}
	\wedge^1F_1\otimes S(-s_1) 
	\xrightarrow{\begin{bmatrix}T_0\\T_1\\T_2\end{bmatrix}} \wedge^1F_0\otimes S(-s_1)
	\xrightarrow{\begin{bmatrix} 	0 & T_2 & -T_1\\  -T_2 & 0 & T_0\\ T_1 & -T_0 & 0 \end{bmatrix}} F_0^*\xrightarrow{\begin{bmatrix}T_0 & T_1 & T_2\end{bmatrix}} F_1^*.
	\end{align}
Then it is easy to see that this is a complex. 
In fact, the above complex is a graded minimal free $S$-resolution of $\coker\alpha_1^*$
\begin{equation}
0\rightarrow \wedge^1F_1\otimes S(-s_1)\to 
\wedge^1F_0\otimes S(-s_1)\to 
F_0^* \to F_1^*\rightarrow \coker\alpha_1^*\rightarrow 0.
\end{equation}

\item 
	Let $R=\kk[x,y,z]$ and $I=I_3(\varphi)$, where 
	\begin{equation}
	\varphi=\begin{bmatrix}
	x^2 & 0 & x^4\\
	y^2 & x^2 & y^4\\
	z^2 & y^2 & z^4\\
	0 & z^2 & x^3z
	\end{bmatrix}.
	\end{equation}
It is easy to verify that this example satisfies the hypothesis of \Cref{setupProp}. We will use the same notation as in part (a). The number $\delta$ is  $d - n= 8-3 = 5$. The graded free $S$-resolution of $\Sym(I)_\delta$ is
	\begin{equation}
	0\rightarrow S(-2)^3 \xrightarrow{\alpha_2} S(-1)^{23} \xrightarrow{\alpha_1} S^{21 }\rightarrow\Sym(I)_5 \rightarrow 0.
	\end{equation}
	Here, we have $f_0 = 21, f_1=23, f_2 = 3, \rk \Sym(I)_5 = 1$ and $r_1=20, r_2 = 3$. Therefore, $f_0-r_1-1 = 21 - 20 - 1 = 0$, $s_1 = r_1 - r_2 = 17$. \Cref{thm1}(a) implies that 
\begin{equation} 
\wedge^0 F_0\otimes S(-s_1)\xrightarrow{m_\wedge\circ(\ide\otimes a_1)}\wedge^1F_0\cong^\eta F_0^*\xrightarrow{\alpha_1^*}F_1^*
\end{equation}
 is a complex. However, this complex does \textit{not} extend to a free resolution of $\coker\alpha_1^*$ (cf. \Cref{thmRk1,thm5}).
\end{enumerate}
\end{example}

As the examples above demonstrate the complexes in \Cref{thm1} are not exact in general. It is natural to ask which conditions guarantee the exactness of these complexes. A positive answer to this question provides part of minimal generating equations for the defining ideal of the corresponding Rees algebra by Jouanolou.
In the following theorems, we provide sufficient conditions for the exactness in the case where $\Sym(I)_k$ is of rank 1 or 2, respectively.

\begin{theorem}\label{thmRk1}
For a fixed integer $i$, consider the complex 
	\begin{equation}
	\wedge^{f_0-r_1-1}F_0\otimes S(-s_1)\xrightarrow{\eta\circ m_\wedge \circ(\ide\otimes a_1)}F_0^*\xrightarrow{\alpha_1^*}F_1^*,
	\end{equation}
in \Cref{thm1}. 
Assume that $\rk \Sym(I)_{\delta-i}=1$. Then 
the complex is exact if and only if $\grade I(a_1)\geq 2$.
\end{theorem}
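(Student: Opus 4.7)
The plan is to reduce the statement to a direct application of the Buchsbaum-Eisenbud acyclicity criterion (\Cref{thmBE73}) on the three-term complex
\begin{equation*}
0 \longrightarrow S(-s_1) \xrightarrow{\;a_1\;} F_0^* \xrightarrow{\;\alpha_1^*\;} F_1^*.
\end{equation*}
The hypothesis $\rk \Sym(I)_{\delta-i} = 1$ gives $f_0 = r_1 + 1$, so $f_0 - r_1 - 1 = 0$ and $\wedge^{f_0-r_1-1} F_0 \cong S$; under the orientation $\eta$ identifying $\wedge^{f_0-1} F_0$ with $F_0^*$, the composite $\eta \circ m_\wedge \circ (\ide \otimes a_1)$ is, up to sign, just $a_1$ itself. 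Hence the two-term complex of \Cref{thm1} is nothing but the rightmost portion of the three-term complex above.

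The first step is to observe that $a_1$ is injective. Indeed, $a_1 \neq 0$, since $\wedge^{r_1}\alpha_1 = a_1 \circ a_2^*$ is nonzero by the very definition of $r_1 = \rk \alpha_1$. Because $S$ is an integral domain and $\wedge^{r_1} F_0$ is free (hence torsion-free), any nonzero $S$-linear map $S(-s_1) \to \wedge^{r_1} F_0$ must be injective of rank one. Consequently, exactness of the two-term complex at $F_0^*$ is equivalent to acyclicity of the three-term complex.

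Next I would verify the rank condition of \Cref{thmBE73}: with $\rk \alpha_1^* = r_1$ and $\rk a_1 = 1$, one has $\rk F_0^* = f_0 = 1 + r_1 = \rk a_1 + \rk \alpha_1^*$. Among the grade conditions, $\grade I(\alpha_1^*) \geq 1$ is automatic because $I(\alpha_1^*) = I_{r_1}(\alpha_1) \neq 0$ by definition of $r_1$ and every nonzero ideal in the polynomial ring $S$ has positive grade. The remaining grade condition is exactly $\grade I(a_1) \geq 2$.

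Therefore, by the \emph{if and only if} of \Cref{thmBE73}, the three-term complex is acyclic precisely when $\grade I(a_1) \geq 2$, which by the reduction in the second paragraph is the desired equivalence. I do not anticipate a substantive obstacle here; the bulk of the work has already been carried out in constructing the complex of \Cref{thm1}, and what remains is the rank count together with the observation that the Buchsbaum-Eisenbud criterion handles both directions of the equivalence simultaneously.
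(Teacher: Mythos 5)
Your proposal is correct and follows essentially the same route as the paper: reduce, via $f_0-r_1-1=0$ and the orientation $\eta$, to the complex $S(-s_1)\xrightarrow{\eta\circ a_1}F_0^*\xrightarrow{\alpha_1^*}F_1^*$ and apply the Buchsbaum--Eisenbud acyclicity criterion (\Cref{thmBE73}) in both directions, with the rank condition $f_0=1+r_1$ and $\grade I(\alpha_1^*)\ge 1$ holding automatically. You merely make explicit the details the paper leaves implicit (injectivity of $a_1$, the rank count); note only that when the resolution has length $m=1$ there is no $a_2$, but then $a_1=\wedge^{r_1}\alpha_1\neq 0$ directly, so your nonvanishing argument still goes through.
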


\begin{proof}
Since $\rk \Sym(I)_{\delta-i}=1$, we have $f_0-r_1-1=0$. Therefore, the complex is isomorphic to the complex
\begin{equation}\label{rk1complex}
S(-s_1)\xrightarrow{\eta \circ a_1}F_0^*\xrightarrow{\alpha_1^*}F_1^*. 
\end{equation}
Notice that $\rk \alpha_1^* \ge 1$ and $\grade I(\eta(a_1)) = \grade I(a_1)$. Hence by \Cref{thmBE73}, the complex is acyclic if and only if $\grade I( a_1)\geq 2$.
\end{proof}

\begin{theorem}\label{thmRk2}
For a fixed integer $i$, consider the complex 
	\begin{equation}
	\wedge^{f_0-r_1-1}F_0\otimes S(-s_1)\xrightarrow{\eta\circ m_\wedge \circ(\ide\otimes a_1)}F_0^*\xrightarrow{\alpha_1^*}F_1^*,
	\end{equation}
in \Cref{thm1}. 
Assume that $\rk \Sym(I)_{\delta-i}=2$. Consider the following statements:
\begin{enumerate}[\indent$(a)$]
\item $\Sym(I)_{\delta-i}$ satisfies Serre's condition $(S_2)$.
\item $\grade I(\alpha_t) \ge t+2$ for $t = 1,\dots, m$. 
\item The complex above is exact. 
\end{enumerate}

Then we have $(a) \Rightarrow (b) \Rightarrow (c)$. 
\end{theorem}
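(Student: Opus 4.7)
The plan is to prove $(a)\Rightarrow(b)$ by a standard Auslander--Buchsbaum depth count, and to deduce $(b)\Rightarrow(c)$ by splicing the complex of \Cref{thm1}(b) with a twist of the original resolution of $M := \Sym(I)_{\delta-i}$ and then applying the Buchsbaum--Eisenbud acyclicity criterion \Cref{thmBE73}.

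For $(a)\Rightarrow(b)$, I would fix $t\ge 1$ and any prime $p\supseteq I(\alpha_t)$. The localized resolution shows that $M_p$ has projective dimension at least $t$, so $M_p$ is non-free. Since $S_p$ is regular, Serre's condition $(S_2)$ forces $\dim S_p \ge 3$: otherwise $\depth M_p \ge \dim S_p$ would make $M_p$ maximal Cohen--Macaulay and hence free, a contradiction. Then $\depth M_p \ge 2$, and Auslander--Buchsbaum gives $\dim S_p \ge t + 2$, whence $\grade I(\alpha_t) \ge t + 2$.

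For $(b)\Rightarrow(c)$, I would set $\partial = \eta \circ m_\wedge \circ (\ide \otimes a_1)$ and form the spliced complex
\[
\mathcal{C} \colon 0 \to F_m(-s_1) \xrightarrow{\alpha_m} \cdots \xrightarrow{\alpha_1} F_0(-s_1) \xrightarrow{\partial} F_0^* \xrightarrow{\alpha_1^*} F_1^*,
\]
which is a complex by \Cref{thm1}. I would then apply \Cref{thmBE73} to $\mathcal{C}$, numbering its arrows from the right as $\varphi_1 = \alpha_1^*$, $\varphi_2 = \partial$, and $\varphi_{t+2} = \alpha_t$ for $1 \le t \le m$. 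The rank conditions reduce to $\rk F_t = r_t + r_{t+1}$ from acyclicity of the original resolution, together with the identity $\rk \partial = 2$. The latter follows because $M^* = \ker \alpha_1^*$ has $S$-rank $2$, and localizing at the generic point shows (using the explicit form of $a_1$ from \Cref{BEstatementprelim}) that $\partial$ surjects onto $M^*$ there. For the grade conditions, hypothesis $(b)$ provides $\grade I(\varphi_{t+2}) = \grade I(\alpha_t) \ge t + 2$ exactly; and the same local analysis of $\partial$ shows $\partial_p$ has maximal rank $2$ whenever $p \not\supseteq I(\alpha_1)$, so $V(I(\partial)) \subseteq V(I(\alpha_1))$ and $\grade I(\partial) \ge \grade I(\alpha_1) \ge 3$. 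Acyclicity of $\mathcal{C}$ then yields $\ker \alpha_1^* = \ima \partial$, which is $(c)$.

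The main technical obstacle will be the local control of $\partial$: establishing both $\rk \partial = 2$ and $V(I(\partial)) \subseteq V(I(\alpha_1))$ requires exploiting the Buchsbaum--Eisenbud structure map $a_1$ at primes where the localized resolution of $M$ splits. Once these are in hand, the numerical matching between $(b)$ and the grade requirements of \Cref{thmBE73} makes the rest of the argument automatic.
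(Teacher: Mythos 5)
Your argument is correct, and its backbone for $(b)\Rightarrow(c)$ is the same as the paper's (\Cref{lemRk2}): splice the twisted resolution with $\partial=\eta\circ m_\wedge\circ(\ide\otimes a_1)$ and $\alpha_1^*$, check the rank and grade conditions of \Cref{thmBE73}, and read off exactness at $F_0^*$. Where you diverge is in the two sub-steps. For the control of $\partial$, the paper avoids any localization: since $\rk\Sym(I)_{\delta-i}=2$, the map $a_1$ lands in $\wedge^{r_1}F_0\cong\wedge^2F_0^*$, so $\partial$ is literally a skew-symmetric matrix whose entries are the coordinates of $a_1$; this gives at once $\sqrt{I_2(\partial)}\supseteq I_1(a_1)\supseteq I(\alpha_1)$ (the last containment from the factorization $\wedge^{r_1}\alpha_1=a_1\circ a_2^*$), hence both $\rk\partial=2$ and $\grade I(\partial)\ge\grade I(\alpha_1)$. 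Your local-splitting analysis at primes $p\not\supseteq I(\alpha_1)$ proves the same two facts and is sound (at such $p$ the resolution splits, $a_{1,p}$ is a unit times the decomposable generator of $\wedge^{r_1}(\operatorname{im}\alpha_1)_p$, and $\partial_p$ surjects onto $(M^*)_p$), but it is exactly the bookkeeping the skew-symmetric description renders unnecessary, and it is also what makes the rank-two hypothesis transparent. For $(a)\Rightarrow(b)$, the paper invokes the Auslander--Bridger theorem (\,$(S_2)$ iff second syzygy\,) and applies \Cref{thmBE73} to the extended acyclic complex, whereas you do a direct Auslander--Buchsbaum depth count; this works, but note that it silently uses the standard fact that $V(I(\alpha_t))$ is precisely the locus where $\operatorname{pd}_{S_p}M_p\ge t$ (and that $\dim M_p=\dim S_p$ because $M$ has positive rank, so the two conventions for $(S_2)$ agree) --- both points deserve a sentence or a citation in a full write-up.
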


Before proving the theorem, we present a lemma which explains the relationship between the conditions $(a)$ and $(b)$.

\begin{lemma}\label{lemRk2} With the setup of \Cref{thmRk2}, we have a complex of graded free $S$-modules
\begin{equation}\label{resolution of coker}
 0\rightarrow F_m\otimes S(-s_1)\xrightarrow{\alpha_m\otimes\ide} \cdots \to F_1\otimes S(-s_1)\xrightarrow{\alpha_1\otimes \ide} F_0\otimes S(-s_1)\xrightarrow{\eta\circ m_\wedge\circ(\ide\otimes a_1)}F_0^*\xrightarrow{\alpha_1^*}F_1^*.
\end{equation}
Furthermore, this complex is acyclic if $\Sym(I)_{\delta-i}$ satisfies Serre's condition $(S_2)$.
\end{lemma}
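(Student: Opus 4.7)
First, the assertion that the displayed sequence is a complex reduces to the vanishing of three types of compositions. The middle vanishings $(\alpha_{t+1}\otimes\ide)\circ(\alpha_t\otimes\ide)=0$ are immediate from $F_\bullet$ being a complex of $S$-modules. Writing $\sigma:=\eta\circ m_\wedge\circ(\ide\otimes a_1)$, the vanishing $\sigma\circ(\alpha_1\otimes\ide)=0$ is precisely \Cref{thm1}(2), and $\alpha_1^*\circ\sigma=0$ is \Cref{thm1}(1).

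Next I plan to deduce the acyclicity from the Buchsbaum--Eisenbud acyclicity criterion (\Cref{thmBE73}). The rank conditions at the maps $\alpha_t\otimes\ide$ are inherited from the exactness of $F_\bullet\to\Sym(I)_{\delta-i}$, and $\rk\alpha_1^*=r_1$ yields the rank balance at $F_0^*$. The new check $\rk\sigma=2=\rk\Sym(I)_{\delta-i}$ will be verified by localizing at the zero prime of the domain $S$: there $\Sym(I)_{\delta-i}$ becomes free of rank $2$, the complex $F_\bullet\otimes\Quot(S)$ splits, and $\sigma$ can be computed explicitly in a splitting basis of $F_0\otimes\Quot(S)$.

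For the grade conditions, $\grade I(\alpha_1^*)=\grade I(\alpha_1)\ge 1$ is automatic from $F_\bullet$ being acyclic. I plan to derive $\grade I(\alpha_k)\ge k+2$ for $1\le k\le m$ from the $(S_2)$ hypothesis by an Auslander--Buchsbaum argument: for any prime $P\supset I(\alpha_k)$, the localization $(\alpha_k)_P$ has no unit $r_k$-minor, so the $k$th step of the minimal $S_P$-resolution of $\Sym(I)_{\delta-i,\,P}$ cannot split off, forcing $\pd_{S_P}\Sym(I)_{\delta-i,\,P}\ge k$; hence $\depth \Sym(I)_{\delta-i,\,P}\le\dim S_P-k$ by Auslander--Buchsbaum (using that $S$ is Cohen--Macaulay). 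Combined with $(S_2)$, which gives $\depth \Sym(I)_{\delta-i,\,P}\ge\min(2,\dim S_P)$ (noting $\Sym(I)_{\delta-i}$ is faithful of rank $2$ over the domain $S$), this forces $\dim S_P\ge k+2$, proving the grade estimate.

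The main obstacle is the remaining grade estimate $\grade I(\sigma)=\grade I_2(\sigma)\ge 2$ at the splicing map. I plan to address this by first observing that $\sigma$ is represented by a skew-symmetric $f_0\times f_0$ matrix whose off-diagonal entries are, up to sign, the coefficients of $a_1\in\wedge^{r_1}F_0$ in the standard basis, so that $I_1(\sigma)=I(a_1)$. Since $\sqrt{I(a_1)}=\sqrt{I(\alpha_1)}$ by \Cref{BEprelim}(c) and the previous paragraph gives $\grade I(\alpha_1)\ge 3$ under $(S_2)$, we obtain $\grade I_1(\sigma)\ge 3$. The desired $\grade I_2(\sigma)\ge 2$ can then be obtained either by a direct analysis of the $2\times 2$ Pl\"ucker-type minors of this skew-symmetric matrix (which, modulo $I(a_1)$, produce the relations defining the rank-$2$ locus), or, more conceptually, by invoking that $(S_2)$ forces $\Sym(I)_{\delta-i}$ to be reflexive over the regular ring $S$, so the canonical rank-$2$ reflexive identification $M\cong M^*\otimes\det M$ applies with $\det M\cong S(s_1)$ (from the shift computation in \Cref{reviewBE74}); this identifies the induced map $\bar\sigma:\Sym(I)_{\delta-i}(-s_1)\to\Hom_S(\Sym(I)_{\delta-i},S)$ as the corresponding isomorphism, retroactively securing the grade bound and completing the BE criterion.
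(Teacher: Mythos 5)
Your outline is essentially sound and, for most of its length, runs parallel to the paper's proof: the complex property is obtained from \Cref{thm1} exactly as in the paper (with $f_0-r_1-1=\rk\Sym(I)_{\delta-i}-1=1$), and acyclicity is deduced from the Buchsbaum--Eisenbud criterion \Cref{thmBE73} after establishing $\grade I(\alpha_k)\ge k+2$ together with a grade bound at the splice map $\sigma=\eta\circ m_\wedge\circ(\ide\otimes a_1)$. The one genuinely different ingredient is how you extract $\grade I(\alpha_k)\ge k+2$ from $(S_2)$: the paper invokes Auslander--Bridger ($(S_2)$ is equivalent to being a second syzygy), lengthens the resolution by two free modules $F_{-1},F_{-2}$, and applies \Cref{thmBE73} to the extended complex; you instead localize at a prime $P\supseteq I(\alpha_k)$, use the standard fact that this forces projective dimension at least $k$ over $S_P$, and count depths via Auslander--Buchsbaum together with $(S_2)$ and the full support of the rank-two module. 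Both routes are correct; yours trades the syzygy theorem for the (also standard, but worth stating explicitly) relation between $I(\alpha_k)$ and local projective dimension.

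Two soft spots remain. First, \Cref{BEprelim}(3) asserts $\sqrt{I(a_k)}=\sqrt{I(\alpha_k)}$ only for $k>1$, so it cannot be quoted for $a_1$; what you actually need is only the containment $I(\alpha_1)\subseteq I_1(a_1)$, which follows from the factorization $\wedge^{r_1}\alpha_1=a_1\circ a_2^*$ and already yields $\grade I_1(\sigma)\ge\grade I(\alpha_1)\ge 3$. Second, and more seriously, the decisive estimate $\grade I_2(\sigma)\ge 2$ is only sketched: of your two proposed completions, the ``reflexive identification'' route is circular, since knowing that the induced map $\bar\sigma$ is the canonical isomorphism $M\cong\Hom_S(M,S)(-s_1)$ is essentially equivalent to the exactness you are trying to prove; the ``direct analysis'' route is the right one and must be made explicit, and it is a one-line observation which is precisely what the paper uses: since $\sigma$ is skew-symmetric with off-diagonal entries the coefficients $c_{ij}$ of $a_1$, the principal $2\times 2$ minor on rows and columns $\{i,j\}$ equals $c_{ij}^2$ up to sign, so $I_1(a_1)\subseteq\sqrt{I_2(\sigma)}$ and hence $\grade I_2(\sigma)\ge\grade I_1(a_1)\ge 3\ge 2$. (Your parenthetical about the $2\times 2$ minors ``defining the rank-$2$ locus'' is off --- the rank-$\le 2$ locus of a skew-symmetric matrix is cut out by the $4\times 4$ Pfaffians; what you need is just $V(I_2(\sigma))=V(I_1(\sigma))$.) With that sentence supplied, and with the rank check $\rk\sigma=2=f_0-r_1$ (which also follows from $\operatorname{im}\sigma\subseteq\ker\alpha_1^*$ together with the evenness of the rank of a nonzero skew-symmetric form over $\Quot(S)$), your argument is complete.
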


\begin{proof}
Recall that 
\begin{equation}
 0\rightarrow F_m \xrightarrow{\alpha_m} F_{m-1} \rightarrow\cdots\rightarrow F_2 \xrightarrow{\alpha_2} F_1\xrightarrow{\alpha_1} F_0
\end{equation}
is a graded minimal free $S$-resolution of $\Sym(I)_{\delta-i}$, and so is its shift. 
Hence for the first part of the statement, it suffices to show that 
\begin{equation}
F_1\otimes S(-s_1)\xrightarrow{\alpha_1\otimes \ide} F_0\otimes S(-s_1)\xrightarrow{\eta\circ m_\wedge \circ(\ide\otimes a_1)}F_0^*\xrightarrow{\alpha_1^*}F_1^*
\end{equation}
 is a complex. But this follows from \Cref{thm1}(b) with $f_0 - r_1 - 1 = \rk Sym(I)_{\delta-i} - 1 = 1$.\\

We first describe the map $\eta\circ m_\wedge\circ(\ide\otimes a_1): F_0 \otimes S(-s_1) \to F_0^*$. By choosing a basis $\{ e_1,\dots, e_{f_0} \}$ of $F_0$, this can be described as a square matrix. As $a_1 : S(-s_1) \to \wedge^{r_1} F_0 \cong \wedge^{f_0-2} F_0^*$, by abuse of notation, we list the basis for $\wedge^{r_1} F_0$ as $e_i^* \wedge e_j^*$ and $a_1 = \oplus c_{i,j} e_i^* \wedge e_j^*$, where $c_{i,j} \in S$. 
With this notation, one sees that $\eta\circ m_\wedge\circ(\ide\otimes a_1)$ a skew symmetric matrix whose $i,j$th entry for $i<j$ is $c_{i,j}$ up to sign. Hence we have $\sqrt{I_2 (\eta\circ m_\wedge\circ(\ide\otimes a_1))} \supset I_1 (a_1)$ and $\grade I(\eta\circ m_\wedge\circ(\ide\otimes a_1)) \ge \grade I_1 (a_1) \ge \grade I(\alpha_1)$. 
Furthermore, as $\grade \alpha_1^* = \grade \alpha_1$, by \Cref{thmBE73}, the complex in \cref{resolution of coker} is exact if and only if $\grade \alpha_m \ge m + 2$ for $i = 1,\dots, m$. \\
 
It is a well-known theorem of Auslander and Bridger \cite[Theorem 4.25]{AB} that a module $M$ is $k$th syzygy if and only if $M$ satisfies Serre's condition $(S_k)$. 
Hence $\Sym(I)_{\delta-i}$ satisfies $(S_2)$ if and only if for some free $S$-modules $F_{-1}, F_{-2}$, the following complex is acyclic 
\begin{equation}
 0\rightarrow F_m \xrightarrow{\alpha_m} F_{m-1} \rightarrow\cdots\rightarrow F_2 \xrightarrow{\alpha_2} F_1\xrightarrow{\alpha_1} F_0 \to F_{-1} \to F_{-2}.
\end{equation}
By \Cref{thmBE73}, this condition implies that $\grade \alpha_m \ge m + 2$ for $i = 1,\dots, m$.
\end{proof}
 
 \begin{proof}[Proof of \Cref{thmRk2}]
 Now the proof follows from \Cref{lemRk2} and its proof. 
\end{proof}

A similar argument as in the proof of \Cref{lemRk2}, one can show that the exactness of the complex in \Cref{thmRk1} follows from the condition that $\rk \Sym(I)_{\delta-i}=1$ and $\Sym(I)_{\delta-i}$ satisfies Serre's condition $(S_1)$. Hence it is natural to ask the following question. 

\begin{question}\label{rge3Question}
Consider the complex 
	\begin{equation}
	\wedge^{f_0-r_1-1}F_0\otimes S(-s_1)\xrightarrow{\eta\circ m_\wedge \circ(\ide\otimes a_1)}F_0^*\xrightarrow{\alpha_1^*}F_1^*
	\end{equation}
in \Cref{thm1}. 
Assume that $\rk \Sym(I)_{\delta-i}= r > 0$. Then is
this complex exact if $\Sym(I)_{\delta-i}$ satisfies Serre's condition $(S_r)$?
Furthermore, in the case where this complex is exact, is there a natural way to construct the minimal free resolution of $\coker \alpha_1^*$? 
\end{question}

This question has an affirmative answer when $r \le 2$ by \Cref{thmRk1,thmRk2}. 

\section{Degrees of Implicit equation}\label{sectionOnDegreesofImplicit}
In this section, we provide a concrete description of the bi-degree of the equation defining the special fiber ring $\mathcal{F}(I)$. With the help of Morley forms, the results in this section can be used to find the equation defining $\mathcal{F}(I)$. 
We assume the notation and set up of \Cref{setupProp}, but for the convenience of the reader we recall the key notation and setup.  By $d_1, \dots, d_n$ we denote the column degrees of the graded presentation matrix $\varphi$ of $I$, $n$ is the dimension of $R$, $I = I_n(\varphi)$, and $\delta=d_1+\cdots+ d_n-n = d-n$. 
Consider the diagram, where the rows are exact
\begin{equation}
\xymatrix{
0 \ar[r] & \mathcal{A}\ar[r]\ar[d] & \Sym(I)\ar[r]\ar[d] & R[It]\ar[r]\ar[d] & 0\\
0\ar[r] & \ker\overline{\pi}\ar[r] & S=\kk[T_0,\dots,T_n] \ar[r]^{\hspace{1cm}\overline{\pi}}& \mathcal{F}(I)\ar[r] & 0.
}
\end{equation}
The defining ideal of $\mathcal{F}(I)$ is $\ker\overline{\pi}\cong\mathcal{A}\otimes\kk\cong \mathcal{A}_0 S$.  
By \Cref{perfPair}, to find the defining ideal of $\mathcal{F}(I)$, it is enough to compute $\Hom_S(\Sym_R(I)_\delta,S(-n))$.\\

We begin with the resolution of $\Sym(I)_{\delta}$, which is induced from the resolution in \Cref{pieceRes}.
\begin{align}\label{symdeltares}
0\rightarrow F_m\xrightarrow{\alpha_m}F_{m-1}\rightarrow\cdots\rightarrow F_2\xrightarrow{\alpha_2}F_1\xrightarrow{\alpha_1}F_0\rightarrow\Sym(I)_\delta\rightarrow 0,
\end{align}
where $m\leq n-1$ and since $\delta + n = d = d_1 + \cdots + d_n$, we may write
\begin{align}
F_t=\bigoplus_{1\leq j_1\leq j_2\leq\cdots\leq j_t\leq n}S(-t)^{\delta-(d_{j_1}+\cdots+d_{j_t})+n-1\choose n-1}=\bigoplus_{1\leq k_1\leq k_2\leq\cdots\leq k_{n-t}\leq n}S(-t)^{d_{k_1}+\cdots+d_{k_{n-t}}-1\choose n-1},
\end{align}
where $\{ j_1, ,\dots, j_t, k_1,\dots,k_{n-t} \} = \{1,\dots,n\}$. 
As in the previous section, we let $r_t=\rk~\alpha_t, f_t=\rk ~F_t$ 
and $\mathcal{A}_0\cong \Hom(\Sym(I)_\delta,S(-n))=\ker\alpha_1^*(-n)$.

\begin{remark}\label{remarkonextraranks}
The length of the Koszul complex in \Cref{pieceRes} is $n$ whereas the length of the resolution of $\Sym(I)_\delta$ in \cref{symdeltares} can be strictly less than $n-1$.  
In particular, $F_t = 0$ for $m+1 \le t \le n-1$. 
In other words, we have
\begin{equation}
{d_{k_1}+\cdots+d_{k_{n-t}}-1\choose n-1} = 0~\iffw~ m+1\le t. 
\end{equation}
\end{remark}

By \cite[Proposition 2.4]{UV}, $\ker\overline{\pi}$ is a principal prime ideal  and hence, so is $\ker\alpha_1^*$.
By  Theorem \ref{thm1}, we have a complex 
\begin{equation}
\wedge^{f_0-r_1-1}F_1\otimes S(-s_1)\xrightarrow{\eta\circ m_\wedge \circ (\ide\otimes a_1)}F_0^*\xrightarrow{\alpha_1^*}F_1^*,
\end{equation}
 where $\eta$ is a fixed orientation of $F_0$ and $a_1:S(-s_1)\rightarrow\wedge^{r_1}F_0$ is the map obtained by applying  \Cref{BEprelim} to the resolution \cref{symdeltares}. 
 We refer to \Cref{reviewBE74}(2) for the description of the shifts $s_1$ in the preceding complex. Since $\rk \Sym(I)_\delta=1$, we have $f_0-r_1-1=0$, and the above complex simplifies to  $S(-s_1)\xrightarrow{\eta(a_1)}F_0^*\xrightarrow{\alpha_1^*}F_1^*$. 
 Thus $\eta(a_1(1_{S(-s_1)}))\in \ker\alpha_1^*$. Henceforth, we will often denote the basis element $1_{S(-s_1)}$ of $S(-s_1)$ simply by $1$ if no confusion arises.  \\

Our main result of this section is the following theorem. 

\begin{theorem}\label{thm5}Assume $\kk$ is a field of characteristic zero and the notation and hypothesis of \Cref{setupProp}. Let $R=\kk[x_1,\dots,x_n]$ with $n \ge 3$, and let $\phi : \Proj R=\PP^{n-1} {\xdashrightarrow{[f_0:\cdots: f_n]}}  \mathbb{P}^{n}_\kk$ be the rational map defined in \cref{parametrizationmap}. Write $I = (f_0,\dots, f_n)$. Then the following statements are equivalent:
	\begin{enumerate}[\indent$(1)$]
		\item The rational map $\phi$ is birational onto its image.	
		\item The $\ker\alpha_1^*$ is generated by $(\eta \circ a_1)(1)$.
		\item $\grade I(\eta(a_1))\geq 2$.
		\item The greatest common divisor of the entries of $\eta \circ a_1$ (this is a column matrix) is 1.
		\item $e(\mathcal{F}(I))= e ({R}/{(g_0,\dots,g_{n-2}):I})$, where $g_0, \dots, g_{n-2}$ are general $\kk$-linear combinations of the $f_i'$s. Here, $e(-)$ denotes, the Hilbert-Samuel multiplicity. 
	\end{enumerate}
\end{theorem}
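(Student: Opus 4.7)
\emph{The plan is to} prove the five equivalences via the chain $(2)\Leftrightarrow(3)\Leftrightarrow(4)\Leftrightarrow(1)\Leftrightarrow(5)$, combining the rank-one case of \Cref{thmRk1} with a gcd/grade argument in the polynomial ring $S$ and a classical multiplicity formula for rational maps. The starting observation is that $\overline{\ima\phi}$ is a hypersurface in $\PP^n$, so by \cite[Proposition 2.4]{UV} the ideal $\mathcal{A}_0$ is principal, say $\mathcal{A}_0=(P)$. Combined with the duality $\mathcal{A}_0\cong\ker\alpha_1^*(-n)$ from \Cref{perfPair}, this forces $\rk_S\Sym(I)_\delta=1$ (so that the hypothesis of \Cref{thmRk1} is met) and identifies $\ker\alpha_1^*$ with a cyclic $S$-submodule of $F_0^*$, generated by a column $e$ of degree $\deg P-n$. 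A torsion-freeness argument (if $e=gw$ with $w\in F_0^*$ and $g$ a common divisor of the entries of $e$, then $\alpha_1^*(w)=0$, so $w\in\ker\alpha_1^*=Se$, forcing $g$ to be a unit) shows that the entries of $e$ have gcd $1$.

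\emph{Next,} I would analyze $(\eta\circ a_1)(1)$. By \Cref{thm1}(1) applied with $f_0-r_1-1=0$, this element of $\ker\alpha_1^*$ has degree $s_1$, so we may write $(\eta\circ a_1)(1)=h\cdot e$ for a homogeneous $h\in S$ of degree $s_1+n-\deg P\geq 0$. This at once yields the upper bound $\deg P\leq s_1+n$ asserted at the beginning of the theorem, and it shows that $h$ is precisely the gcd of the entries of the column $\eta\circ a_1$. Hence (2) and (4) each hold iff $h$ is a unit, iff $\deg P=s_1+n$, giving $(2)\Leftrightarrow(4)$. The equivalence $(2)\Leftrightarrow(3)$ is \Cref{thmRk1} specialized to rank one, and $(3)\Leftrightarrow(4)$ is the standard fact that in the polynomial ring $S$ an ideal $J$ satisfies $\grade J\geq 2$ iff $J$ is not contained in any (principal) height-one prime, iff the gcd of a generating set of $J$ is a unit.

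\emph{Finally,} to close the chain with birationality I would invoke two results. First, $(1)\Leftrightarrow(4)$ was already established in \cite[Corollary 3.7]{KPU16}. Second, $(1)\Leftrightarrow(5)$ follows from \Cref{birationalprelim} together with the classical multiplicity formula
\begin{equation}
e\bigl(R/((g_0,\dots,g_{n-2}):I)\bigr)=\deg(\phi)\cdot e(\mathcal{F}(I))
\end{equation}
for general $\kk$-linear combinations $g_0,\dots,g_{n-2}$ of the $f_i$'s. The main obstacle in carrying out this plan is justifying the multiplicity formula rigorously in our graded setting: it rests on residual-intersection theory applied to the codimension-two perfect ideal $I$, and the hypothesis $\mu(I_p)\leq\dim R_p$ for $p\neq\m$ is exactly what is needed to ensure that $(g_0,\dots,g_{n-2}):I$ is unmixed and geometric of codimension $n-1$. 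Once the formula is in place, $\phi$ is birational iff $\deg\phi=1$ iff (5) holds, completing the cycle.
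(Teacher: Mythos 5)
Your outer pieces are fine and largely parallel the paper: the chain $(2)\Leftrightarrow(3)$ via \Cref{thmRk1}, $(3)\Leftrightarrow(4)$ via the UFD/grade-two observation, your gcd argument identifying the content of $(2)$ and $(4)$ with the vanishing of the extra factor $h$ in $(\eta\circ a_1)(1)=h\cdot e$, and $(1)\Leftrightarrow(5)$ via \Cref{KPUmultiplicity} plus the residual-intersection argument replacing $(g_0,\dots,g_{n-2}):I^\infty$ by $(g_0,\dots,g_{n-2}):I$ are all sound in outline and match \Cref{thm5}'s proof. The problem is the bridge between the two groups $\{(2),(3),(4)\}$ and $\{(1),(5)\}$.

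You claim that $(1)\Leftrightarrow(4)$ ``was already established in [KPU16, Corollary 3.7],'' but this rests on a misreading of the numbering: the sentence in the introduction refers to items $(1)$ and $(4)$ of Theorem B, where $(4)$ is the multiplicity equality (item $(5)$ of \Cref{thm5}), not the gcd condition. \cite[Corollary 3.7]{KPU16} only gives the formula $e(\mathcal{F}(I))=e\bigl(R/((g_0,\dots,g_{n-2}):I^\infty)\bigr)/[\kk[R_d]:\kk[f_0,\dots,f_n]]$; it knows nothing about the Buchsbaum--Eisenbud multiplier $a_1$, which is a construction specific to this paper, so it cannot relate birationality to the gcd of the entries of $\eta\circ a_1$. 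The genuinely missing ingredient is the quantitative statement of \Cref{thm4}, namely
\begin{equation}
\deg \operatorname{Im}(\eta\circ a_1)+n \;=\; s_1+n \;=\; e\bigl(R/((g_0,\dots,g_{n-2}):I)\bigr),
\end{equation}
which the paper proves by resolving $R/I_2(\varphi'')$ with the Eagon--Northcott complex, extracting the multiplicity from the Hilbert series, and matching it against the ranks $f_t$ of the resolution of $\Sym(I)_\delta$ and the identity $s_1=\sum_{p}(-1)^{p-1}r_p$. With this equality, your observation that $(2)$ and $(4)$ hold exactly when $\deg P=s_1+n$ combines with the KPU formula $e(R/((g):I))=[\kk[R_d]:\kk[f_0,\dots,f_n]]\cdot e(\mathcal{F}(I))$ to close the loop; without it, $\deg P=s_1+n$ is not tied to $\deg\phi=1$, and your five statements split into two mutually unconnected equivalence classes. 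So the proposal has a real gap: you must either prove the multiplicity computation of \Cref{thm4} or supply an independent argument identifying $s_1+n$ with $e\bigl(R/((g_0,\dots,g_{n-2}):I)\bigr)$.
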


The Hilbert-Samuel multiplicity $e(\mathcal{F}(I))$ is also called the \textit{degree} of the projective variety $\Proj \mathcal{F}{(I)} = \overline{\Phi (\mathbb{P}^{n-1})}$. 
We use the next two lemmas to prove \Cref{thm5}.

\begin{lemma}[{\cite[Corollary 3.7]{KPU16}}]\label{KPUmultiplicity}
	Let $\kk$ be a field of characteristic zero, $R=\kk[x_1,\dots,x_n]$ with $n \ge 3$. Further, let $I$ be a homogeneous ideal generated by forms of the same degree $d$ and $\kk[R_d]$ the $d$-th Veronese subring of $R$. 
	If $g_0,\dots,g_{n-2}$ are general $\kk$-linear combinations of the generators $f_0,\dots,f_n$ of $I$, then the following equality holds.
	\begin{equation}\label{KPUmultiplicityeq}
	e(\mathcal{F}(I))=\frac{1}{[\kk[R_d]:\kk[f_0,\dots,f_n]]}\cdot e\left(\frac{R}{(g_0,\dots,g_{n-2}):I^\infty} \right).
	\end{equation}
\end{lemma}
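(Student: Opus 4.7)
The strategy is to interpret both sides of the asserted formula geometrically and then match them via a projection formula. First, since $\mathcal{F}(I) \cong \kk[f_0,\dots,f_n]$ is the homogeneous coordinate ring of the image $\overline{\phi(\mathbb{P}^{n-1})} \subset \mathbb{P}^n$, the Hilbert--Samuel multiplicity $e(\mathcal{F}(I))$ equals the projective degree of this image. On the other side, $\kk[f_0,\dots,f_n] \subset \kk[R_d]$ is a finite extension of $n$-dimensional graded domains, and $[\kk[R_d]:\kk[f_0,\dots,f_n]]$ stands for the degree of the induced fraction-field extension. Passing to the degree-zero pieces of these fraction fields identifies this with $[\mathbb{K}(\mathbb{P}^{n-1}):\mathbb{K}(\overline{\phi(\mathbb{P}^{n-1})})]$, which by definition equals $\deg \phi$; separability is automatic thanks to the characteristic-zero hypothesis.

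Next, I would identify the right-hand side as the degree of a ``proper preimage''. Let $L_i := \sum_{j} a_{ij} T_j \in \kk[T_0,\dots,T_n]$ be the linear forms on $\mathbb{P}^n$ lifting the $g_i = \sum_j a_{ij} f_j$, and set $\Lambda := V(L_0,\dots,L_{n-2}) \subset \mathbb{P}^n$. Then $V(g_0,\dots,g_{n-2}) = \phi^{-1}(\Lambda) \subset \mathbb{P}^{n-1}$ scheme-theoretically, and this subscheme necessarily contains the base locus $V(I)$ since every $g_i$ lies in $I$. By its definition, the ideal $(g_0,\dots,g_{n-2}) : I^\infty$ is the saturation with respect to $I$, which cuts out the scheme-theoretic closure of $\phi^{-1}(\Lambda) \setminus V(I)$---the proper preimage. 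Consequently $e(R/((g_0,\dots,g_{n-2}):I^\infty))$ is the degree of this proper preimage as a subscheme of $\mathbb{P}^{n-1}$.

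The third step is the projection formula. For general coefficients $(a_{ij})$, $\Lambda \subset \mathbb{P}^n$ is a general codimension $n-1$ linear subvariety, so by Bertini together with generic smoothness---both available in characteristic zero---its intersection with $\overline{\phi(\mathbb{P}^{n-1})}$ is a reduced zero-dimensional scheme of cardinality $e(\mathcal{F}(I))$, and each intersection point is a regular value of $\phi$ lying in the complement of the image of the indeterminacy locus. Hence each such point has exactly $\deg \phi$ reduced preimages in $\mathbb{P}^{n-1} \setminus V(I)$, so the proper preimage is a reduced zero-dimensional subscheme of cardinality $\deg\phi \cdot e(\mathcal{F}(I))$. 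Combining with the identifications from the first step yields
\[
e\left(\frac{R}{(g_0,\dots,g_{n-2}):I^\infty}\right) = \deg\phi \cdot e(\mathcal{F}(I)) = [\kk[R_d]:\kk[f_0,\dots,f_n]] \cdot e(\mathcal{F}(I)),
\]
which rearranges to the identity of the lemma.

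The principal obstacle is making this generic/Bertini step fully rigorous algebraically: one must guarantee that for a dense open choice of the coefficients $(a_{ij})$, the saturation $(g_0,\dots,g_{n-2}):I^\infty$ has neither embedded primes supported on $V(I)$ nor non-reduced structure, and that $\Lambda$ is transverse both to $\overline{\phi(\mathbb{P}^{n-1})}$ and to the image of the non-smooth locus of the projection $\Gamma_\phi \to \mathbb{P}^n$ from the graph of $\phi$. Once these transversality conditions are secured---using generic smoothness of the graph projection and the fact (cf.\ \Cref{corGeneralHeight}) that openness of such genericity is automatic in the parameter space of the $a_{ij}$---the reduced zero-dimensional structure of the proper preimage, and hence the multiplicity identification, follows.
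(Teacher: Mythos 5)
There is nothing internal to compare against here: the paper does not prove this lemma, but imports it verbatim from \cite[Corollary 3.7]{KPU16} and uses it as a black box (its role in the paper is only as an ingredient in the proof of \Cref{thm5}). Your outline is the natural geometric reading of that formula --- identify $e(\mathcal{F}(I))$ with the degree of $Z:=\overline{\phi(\mathbb{P}^{n-1}_\kk)}$, identify the graded fraction-field degree with $\deg\phi$ (your degree-zero-part argument is fine), and recognize $V((g_0,\dots,g_{n-2}):I^\infty)$ as the closure of $\phi^{-1}(\Lambda)\setminus V(I)$ for the corresponding general codimension $n-1$ linear space $\Lambda$ --- and this is indeed the idea behind the cited result, which Kustin--Polini--Ulrich establish carefully via the morphism from the graph and general hyperplane sections.

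As a proof, however, what you wrote is an outline whose decisive step is asserted rather than proved, as you yourself concede in the final paragraph. Everything rests on the genericity claims: that a general $\Lambda$ meets $Z$ in exactly $\deg Z$ reduced points; that these points avoid both the image in $\mathbb{P}^n_\kk$ of the exceptional locus of $\Gamma_\phi\to\mathbb{P}^{n-1}_\kk$ and the locus of positive-dimensional or non-reduced fibers (this needs the explicit dimension count that those loci have dimension at most $n-2$ while $\Lambda$ is a line, plus Kleiman--Bertini/generic smoothness in characteristic zero to get that the scheme-theoretic fiber $\phi^{-1}(\Lambda)$ is reduced at the surviving points); and that consequently $R/((g_0,\dots,g_{n-2}):I^\infty)$ is one-dimensional with Hilbert--Samuel multiplicity equal to the number of those points. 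Your appeal to \Cref{corGeneralHeight} does not close this: that result only produces open loci in the parameter space of the $a_{ij}$ for height/fiber-dimension conditions, not for transversality or reducedness, which require a separate (though standard in characteristic zero) argument. Two further points should be made explicit: the asserted scheme-theoretic equality $V(g_0,\dots,g_{n-2})=\phi^{-1}(\Lambda)$ is only valid on $\mathbb{P}^{n-1}_\kk\setminus V(I)$, which is all you actually use but not what you state; and the whole argument presupposes that $\phi$ is generically finite onto an $(n-1)$-dimensional image, i.e.\ that $[\kk[R_d]:\kk[f_0,\dots,f_n]]$ is finite --- an implicit hypothesis that should be isolated (it does hold in the paper's application, where $\mathcal{F}(I)$ is an $n$-dimensional hypersurface ring). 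With those items supplied, your sketch becomes a correct characteristic-zero proof; as it stands it is a program rather than a proof.
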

In the above theorem $[\kk[R_d]:\kk[f_0,\dots,f_n]]$ denotes the field extension degree $[\Quot(\kk[R_d]):\Quot(\kk[f_0,\dots,f_n])]$. Using \Cref{birationalprelim}, we have that the rational map $\phi$ (as defined in \cref{parametrizationmap}) is birational onto its image if and only if $[\kk[R_d]:\kk[f_0,\dots,f_n]]=1$. 

\begin{defn}
	For any matrix $A$ with entries of the same degree, we define $\deg~ A$ to be the degree of the entries of $A$.
\end{defn}

\begin{lemma}\label{thm4} Assume the notation and hypothesis of \Cref{thm5}. 
For general $\kk$-linear combinations $g_0,\dots,g_{n-2}$ of the generators $f_0,\dots,f_n$ of $I$, we have
\begin{equation}\label{deg of entries and multiplicity}
\deg \operatorname{Im} (\eta \circ a_1)+\dim R=e\left(\frac{R}{(g_0,\dots,g_{n-2}):I} \right) = e\left(\frac{R}{(g_0,\dots,g_{n-2}):I^\infty} \right).
\end{equation}
\end{lemma}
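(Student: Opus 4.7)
I would prove the two equalities in \cref{deg of entries and multiplicity} independently.

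\emph{Residual intersection.} For the equality $e(R/(J:I))=e(R/(J:I^\infty))$, under the hypotheses of \Cref{setupProp}, $I$ is perfect of codimension $2$ (hence strongly Cohen--Macaulay) and satisfies $(G_n)$. For general $g_0,\ldots,g_{n-2}\in I$, the ideal $J=(g_0,\ldots,g_{n-2})$ yields a geometric $(n-1)$-residual intersection of $I$; by the classical Huneke--Ulrich residual-intersection theorems in the strongly Cohen--Macaulay setting, one obtains $J:I=J:I^\infty$ and therefore the equality of their multiplicities.

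\emph{Algebraic computation of $e_S(R[It]_\delta)$.} For the first equality the plan is to compute $e_S(R[It]_\delta)$ in two different ways. From the short exact sequence $0\to\mathcal{A}_\delta\to\Sym(I)_\delta\to R[It]_\delta\to 0$ of graded $S$-modules, the linear resolution \cref{symdeltares} yields
\[
H(\Sym(I)_\delta;z)=P(z)/(1-z)^{n+1},\qquad P(z)=\sum_{i}(-1)^i f_i z^i,
\]
while \Cref{perfPair} applied at $i=\delta$ together with $\Sym(I)_0=S$ (no $l_i$ appears in bidegree $(0,\ast)$ since every $l_i$ has positive $x$-degree) gives $\mathcal{A}_\delta\cong\Hom_S(S,S(-n))=S(-n)$, with Hilbert series $z^n/(1-z)^{n+1}$. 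Using the arithmetic identity $s_1=\sum_{k\ge 1}(-1)^{k-1}k f_k=-P'(1)$, which is immediate from the description of the $r_k$ and $s_1$ in \Cref{reviewBE74}, a Taylor expansion of $P(z)-z^n$ at $z=1$ shows that $(P(z)-z^n)/(1-z)$ evaluates to $s_1+n$ at $z=1$. Hence $H(R[It]_\delta;z)=Q(z)/(1-z)^{n}$ with $Q(1)=s_1+n$, so $e_S(R[It]_\delta)=s_1+n=\deg\operatorname{Im}(\eta\circ a_1)+n$.

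\emph{Geometric interpretation.} On the other hand, $R[It]$ is the bihomogeneous coordinate ring of the graph $\Gamma_\phi\subset\mathbb{P}^{n-1}\times\mathbb{P}^n$, which has dimension $n-1$. For fixed $\delta$ and $j\to\infty$ the leading $j^{n-1}$-coefficient of $\dim_{\kk}R[It]_{(\delta,j)}$ is the bidegree $d_{0,n-1}$ of $\Gamma_\phi$, independently of $\delta$. Geometrically $d_{0,n-1}$ is the cardinality of $\phi^{-1}(\ell)$ for a general line $\ell\subset\mathbb{P}^n$, which by \Cref{KPUmultiplicity} equals $[\kk[R_d]:\kk[f_0,\ldots,f_n]]\cdot e(\mathcal{F}(I))=e(R/(J:I^\infty))$. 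Matching the two computations of $e_S(R[It]_\delta)$ yields $s_1+n=e(R/(J:I^\infty))$, and combined with the residual-intersection step this gives the full statement of the lemma.

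\emph{Main obstacle.} The step I expect to be most delicate is the geometric identification $e_S(R[It]_\delta)=d_{0,n-1}$: while it rests on standard bigraded Hilbert-polynomial facts, one must confirm that the $j^{n-1}/(n-1)!$ coefficient of the bi-polynomial captures only the pure $(0,n-1)$-term and that this number coincides with the intersection number $\Gamma_\phi\cdot H_2^{n-1}$. An alternative going through associativity of multiplicity together with the rank identity $\rk_{\mathcal{F}(I)}(R[It]_\delta)=[\kk[R_d]:\kk[f_0,\ldots,f_n]]$ for $\delta=d-n$ is possible, but verifying this rank equality would require separate analysis of $R_\delta\cdot\mathcal{F}(I)\subset R$.
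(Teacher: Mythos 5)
Your residual-intersection step (that $(g_0,\dots,g_{n-2}):I=(g_0,\dots,g_{n-2}):I^\infty$ for general $\kk$-linear combinations, via Ulrich/Polini--Xie and Huneke's theorem for the strongly Cohen--Macaulay, $G_n$ setting) is exactly the paper's first step, and your Hilbert-series computation of $e_S(R[It]_\delta)$ is correct and even tidy: from $0\to\mathcal{A}_\delta\to\Sym(I)_\delta\to R[It]_\delta\to 0$, the linear resolution of $\Sym(I)_\delta$, and $\mathcal{A}_\delta\cong\Hom_S(\Sym(I)_0,S(-n))=S(-n)$, one does get $e_S(R[It]_\delta)=n-P'(1)=s_1+n$, using $P(1)=\sum(-1)^if_i=1$ and $s_1=\sum_{k\ge1}(-1)^{k-1}kf_k$.

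The gap is in the geometric identification $e_S(R[It]_\delta)=d_{0,n-1}(\Gamma_\phi)$, and the justification you give for it is false as stated. The bigraded Hilbert function of $R[It]$ agrees with the bigraded Hilbert polynomial only for \emph{both} degrees large; fixing the $x$-degree at a specific small value $\delta$ and letting $j\to\infty$ computes $e_S(R[It]_\delta)=\operatorname{rank}_{\mathcal{F}(I)}(R[It]_\delta)\cdot e(\mathcal{F}(I))$, and this rank genuinely depends on $\delta$. Indeed $R[It]_0=\kk[f_0t,\dots,f_nt]$ has rank one, so $e_S(R[It]_0)=e(\mathcal{F}(I))$, which differs from $d_{0,n-1}=[\kk[R_d]:\kk[f_0,\dots,f_n]]\cdot e(\mathcal{F}(I))$ whenever $\phi$ is not birational (e.g.\ \Cref{thm1Example}(b), where the extension degree is $2$); so "independently of $\delta$" cannot be right. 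What your argument actually requires is the rank identity $\operatorname{rank}_{\mathcal{F}(I)}R[It]_{d-n}=[\kk[R_d]:\kk[f_0,\dots,f_n]]$ (equivalently, that $R[It]_{d-n}$ has full rank inside the Veronese module $\oplus_j R_{(d-n)+jd}\,t^j$), which you yourself flag as needing "separate analysis" and do not prove; but this is precisely the nontrivial content of the lemma, since combined with \Cref{KPUmultiplicity} it is equivalent to the equality $s_1+n=e(R/(g_0,\dots,g_{n-2}):I^\infty)$ you are trying to establish. So as written the proposal is circular at its key step. For contrast, the paper stays entirely on the $R$-side: it identifies $(g_0,\dots,g_{n-2}):I$ with $\operatorname{Fitt}_0(I/(g_0,\dots,g_{n-2}))=I_2(\varphi'')$ for a $2\times n$ matrix $\varphi''$ of maximal grade, computes $e(R/I_2(\varphi''))$ from the Eagon--Northcott resolution, matches the resulting binomial coefficients with the ranks $f_t$ in the resolution of $\Sym(I)_\delta$, and then verifies by the same rank bookkeeping that this number equals $s_1+n$ -- no multiplicity or bidegree of $R[It]$ is invoked. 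To salvage your route you would need an independent proof of the rank identity (or of the agreement of the Hilbert function with the bigraded Hilbert polynomial at $x$-degree exactly $d-n$), which is not supplied.
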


\begin{proof}
First we show that for general linear combinations $g_0,\dots,g_{n-2}$, 
\begin{equation}
e({R}/{(g_0,\dots,g_{n-2}):I^\infty} ) = e({R}/{(g_0,\dots,g_{n-2}):I} ).
\end{equation}

For each $\lambda=(\lambda_{0},\cdots,\lambda_{n}) \in (\mathbb{A}_\kk^{n+1})^{n+1}$, where $\lambda_i=(\lambda_{i0}:\cdots:\lambda_{in}) \in\mathbb{A}^{n+1}_\kk$ we set $g_{i}=\sum_{j=0}^n\lambda_{ij}f_j\in R$ for $0\leq i\leq n$. 
Let $U'$ be an open subset of $(\mathbb{A}_\kk^{n+1})^{n+1}$ such that $\det (a_{ij})\in\kk\backslash\{ 0\}$. 
This shows that $\{g_0,\dots,g_{n}\}$ is a minimal generating set of $I$ if $\lambda \in U'$. Further, let $U_1'=U'\cap (\mathbb{A}_\kk^{n+1})^{n-1}$ (the first $n-1$ components of $(\mathbb{A}_\kk^{n+1})^{n+1}$).\\

Notice that the ideal $I$ satisfies $\mu(I_p)\leq \dim R_p$ for every $p\in \spc(R)\backslash\{\m\}$. 
By \cite[Corollary  1.6, Proposition 1.7]{Ul1} (see also \cite[Lemma 3.1]{PoliniXie}), there exists a dense open set $U''\subseteq (\mathbb{A}_\kk^{n+1})^{n-1}$ such that for every $\lambda \in U''$, $(g_0,\dots,g_{n-2}):I$ is a geometric $(n-1)$-residual intersection. 
(We note that in \cite{PoliniXie}, the non-empty open subset $U''$ corresponds to general $R$-linear combinations. 
Thus, the same $U''$ can be used as the desired non-empty open subset for $\kk$-linear combinations.)  \\

It follows that $(g_0,\dots,g_{n-2})_P=I_P$ for $P\in V(I)$ such that $\hgt P\leq n-1$. Since $I$ is a codimension two perfect ideal, $I$ is a strongly Cohen-Macaulay ideal. Now we apply \cite[Theorem 3.1]{Hun1}   to conclude that $(g_0,\dots,g_{n-2}):I^\infty=(g_0,\dots,g_{n-2}):I$.
The open set $U = U'_1 \cap U''$ provides the desired non-empty open subset.\\

As $g_0,\dots, g_{n}$ form a minimal homogeneous generators of $I$, we may choose a presentation matrix $\varphi'$ for $(g_0,\dots, g_n)$ with the same column degrees $d_1, \dots, d_n$ of $\varphi$. This follows from the uniqueness of the graded Betti numbers of $I$. Furthermore, we assume that $g_i$ are signed minors of the $(n+1) \times n$ matrix $\varphi'$.  \\

Let $M=I/(g_0,\dots,g_{n-2})$. Notice that the last two rows of $\varphi'$ is a presentation matrix for $M$, and we call this presentation matrix by $\varphi''$. We note that $\varphi''$ is a $2 \times n$ matrix. Furthermore, $I_2 (\varphi'') = {\fitt_0(M)}={\ann M}={(g_0,\dots,g_{n-2}):I}$ (cf. \cite[Theorem 3.1(2)]{BE77}) has the maximum possible height $n-1$. 
Thus we reduce to the case of computing the multiplicity  $e({R}/{I_2(\varphi'')} )$. 
Since the $\grade I_2(\varphi'')$ is the maximum possible number, $n-1$, the Eagon-Northcott complex of $\varphi''$, $\operatorname{EN}(\varphi'')$, forms a graded (minimal) free $R$-resolution for $R/I_2(\varphi'')$.
Let $N=R/I_2(\varphi'')$. Clearly, $e(N)=e(N(1))$, and the shifted Eagon-Northcott complex of $\varphi''$ is the resolution of $N(1)$.
\begin{multline}
\operatorname{EN}(\varphi'')(1): \\
0 \to (\Sym(R^2)_{n-2})^* \otimes R^{n-1}(-(d_1+\cdots d_n-1)) \to (\Sym(R^2)_{n-3})^* \otimes \bigoplus_{1\leq j_1\leq\cdots\leq j_{n-1}\leq n}R^{n-2}(-(d_{j_1}+\cdots+d_{j_{n-1}}-1)) \\ 
\to \cdots \to (\Sym(R^2)_0)^* \otimes \bigoplus_{1\leq j_1\leq j_2\leq n}R(-(d_{j_1}+d_{j_2}-1)) \to R(1)\rightarrow 0.
\end{multline}
Furthermore, the Hilbert Series of $N(1)$ is
\begin{equation}
\frac{\sum\limits_{t=1}^{n-1}\left[(-1)^t\cdot t\sum\limits_{1\leq j_1\leq\cdots\leq j_{t+1}\leq n} z^{(d_{j_1}+\cdots+d_{j_{t+1}}-1)}\right]+z^{-1}}{(1-z)^n}.
\end{equation}
Let $p(z)={\sum\limits_{t=1}^{n-1}\left[(-1)^t\cdot t\sum\limits_{1\leq j_1\leq\cdots\leq j_{t+1}\leq n} z^{(d_{j_1}+\cdots+d_{j_{t+1}}-1)}\right]+z^{-1}}$. Then 
\begin{align}
e(N(1)) & = (-1)^{n-1}\frac{p^{(n-1)}(1)}{(n-1)!}\\
& = (-1)^{n-1}\left[\sum_{t=1} ^{n-1}(-1)^t\cdot t\cdot \sum_{1\leq j_1\leq\cdots\leq j_{t+1}\leq n}{d_{j_1}+\cdots + d_{j_{t+1}}-1\choose n-1} \right] + (-1)^{n-1}\\
& = \sum_{t=1} ^{n-1}\left[(-1)^{n-1+t}\cdot t\cdot \sum_{1\leq j_1\leq\cdots\leq j_{t+1}\leq n}{d_{j_1}+\cdots + d_{j_{t+1}}-1\choose n-1}\right] + 1, 
\end{align}
where $p^{(n-1)}(z)$ denotes the $(n-1)^{\text{st}}$ derivative of $p(z)$. 
We compare the terms with the ranks in \cref{symdeltares}. 
By \Cref{remarkonextraranks}, we have the following equality.
\begin{equation}
\sum_{1\leq j_1\leq\cdots\leq j_{t+1}\leq n}{d_{j_1}+\cdots + d_{j_{t+1}}-1\choose n-1}=
\begin{cases} 
\operatorname{rk} F_{n-t-1} = f_{n-t-1} & t\geq n-m-1, \\
0 & t<n-m-1.
\end{cases}
\end{equation}

Under this identification, we may rewrite $e(N(1))$ as follows
\begin{align}
e(N(1)) & =\sum_{t=n-m-1} ^{n-1}\left[(-1)^{n-1-t}\cdot t\cdot f_{n-t-1}\right] + 1\\
& = (-1)^m(n-m-1)f_m+(-1)^{m-1}(n-m)f_{m-1}+(-1)^{m-2}(n-m+1)f_{m-2}+\cdots +(n-1)f_0 + 1.
\end{align}
We note that since $\rk \Sym(I)_\delta=1$, $\sum_{j=0}^m(-1)^jf_j=1$.
Then one has 
\begin{align}
n-m - 1 &= (n-m - 1) \sum_{j=0}^m(-1)^jf_j \\
&= (-1)^m (n-m-1) f_m + (-1)^{m-1}(n-m-1) f_{m-1} + (-1)^{m-2}(n-m-1) f_{m-2} + \cdots + (n-m-1) f_0 \\
&= (-1)^m (n-m-1) f_m + (-1)^{m-1}(n-m) f_{m-1} +  (-1)^{m-2}(n-m+1) f_{m-2} + \cdots + (n-1) f_0 \\
&\phantom{(-1)^m (n-m-1) f_m +} - [(-1)^{m-1}(1) f_{m-1} +  (-1)^{m-2}(2) f_{m-2}+\cdots + (m) f_0] \\
&= [e(N(1)) - 1] - [(-1)^{m-1}(1) f_{m-1} +  (-1)^{m-2}(2) f_{m-2}+\cdots + (m) f_0] .
\end{align}
Hence 
\begin{align}
e(N(1)) &= (-1)^{m-1}(1) f_{m-1} +  (-1)^{m-2}(2) f_{m-2}+\cdots + (m) f_0 + n - m  \\
 &= \sum_{k=1}^{m}(-1)^{m-k}\cdot k\cdot f_{m-k} + n - m . 
\end{align}
Thus, we have shown that  
\begin{equation}\label{multplicity of saturation}
e({R}/{(g_0,\dots,g_{n-2}):I^\infty} ) = \sum_{k=1}^{m}(-1)^{m-k}\cdot k\cdot f_{m-k}+(n-m).
\end{equation}
We will complete the proof by showing that the right hand side of the equation is equal to $\deg \operatorname{Im} (\eta \circ a_1)+n$. First observe that 
 $\deg \operatorname{Im} (\eta \circ a_1)+n =\deg (a_1)+n$  since $\eta$ is an isomorphism of degree 0.
In \Cref{reviewBE74}, we showed that the degree of $\operatorname{Im} {a_1}$ is $s_1 = \sum_{t=1}^{m}(-1)^{t-1}r_t$,
and $r_t=\sum_{j=t}^{m}(-1)^{j-t}f_j$.
Note that since $\sum_{j=0}^m(-1)^jf_j=1$, we have 
\begin{equation}
1 = f_0 - f_1 + \cdots + (-1)^{t-1} f_{t-1} + \underbrace{(-1)^t f_t + (-1)^{t+1} f_{t+1} + \cdots + (-1)^m f_m}_{(-1)^t r_t}.
\end{equation}
Therefore, 
\begin{equation}
r_t = (-1)^{t+1} (f_0 - f_1 + \cdots (-1)^{t-1} f_{t-1}) + (-1)^t = \sum_{j=0}^{t-1}(-1)^{t+j+1}f_j+(-1)^t.
\end{equation}
Finally, we have 
\begin{align}
\deg \operatorname{Im} (\eta \circ a_1)+n  &=\deg (a_1)+n\nonumber \\
&= \sum_{t=1}^{m}(-1)^{t-1}r_t +n\nonumber\\
& = \sum_{t=1}^m (-1)^{t-1}\left(\sum_{j=0}^{t-1}(-1)^{t+j+1}f_j+(-1)^t \right)+n\nonumber\\
& = \sum_{t=1}^m \left(\sum_{j=0}^{t-1} (-1)^j f_j-1\right) +n\nonumber\\
& = \sum_{t=1}^m \sum_{j=0}^{t-1} (-1)^j f_j+\sum_{t=1}^m (-1) +n \\
&= \sum_{t=1}^m \sum_{j=0}^{t-1} (-1)^j f_j+m(-1) +n\nonumber\\
& = \left(f_0+(f_0-f_1)+(f_0-f_1+f_2)+\cdots +\left(\sum_{j=0}^{m-1}(-1)^jf_j\right)\right)+(n-m)\nonumber\\
&= \sum_{k=1}^{m}(-1)^{m-k}\cdot k\cdot f_{m-k}+(n-m).
\end{align}
This completes the proof. 
\end{proof}

\begin{proof}[Proof of \Cref{thm5}] 
We note that the rational map $\phi$ is birational to the image if and only if $[\kk[R_d]:\kk[f_0,\dots,f_n]] = 1$. \\
$(1)\Leftrightarrow (5)$: This follows from \Cref{KPUmultiplicity,thm4}.

\noindent $(2)\Leftrightarrow (5)$: As $\mathcal{F}(I)$ is a hypersurface, $e(\mathcal{F}(I))$ is the degree of a generating element of $\ker \overline{\pi}$. The equivalence follows from \Cref{KPUmultiplicity,thm4} which say
\begin{equation}
	e(\mathcal{F}(I))=\frac{\deg \operatorname{Im} (\eta \circ a_1)+n}{[\kk[R_d]:\kk[f_0,\dots,f_n]]}.
\end{equation}

\noindent $(2)\Leftrightarrow(3)$: This is a consequence of \Cref{thmRk1}.

\noindent $(3)\Leftrightarrow (4)$: This equivalence is clear as the ambient ring is a UFD. 
\end{proof}

\Cref{thm1Example}(b) is an example where $I$ satisfies the conditions in \Cref{setupProp}, but the rational map is not birational to its image. In this example $\deg a_1 + n = 20$, but $e(\mathcal{F}(I) ) = 10$, so that the extension degree is $2$. 

\section{The Main result and application}\label{mainResultsec}
Let $R = \kk[x,y,z]$ be a polynomial ring over a characteristic zero field $\kk$ and $M$ a $4$ by $3$ matrix with entries in $R$.  
Our main result states that if $M$ is general of type $(1, 2, \nn)$ for some $\nn > 2$ (see Case 2 in \Cref{defGenMat} for the notion of general), then $I = I_3(M)$ satisfies the equivalence conditions in \Cref{thmRk1,thmRk2}. 
Therefore, we are able to state the bi-degrees and the minimal number of equations in each bi-degree of the defining ideal of the Rees algebra $R[It]$, explicitly (hence for $\mathcal{F}(I)$ as well). Furthermore, by the duality of Jouanolou (\Cref{perfPair}), one may recover the explicit equations in $\Sym_R(I)$.

\begin{remark}\label{rmkMatBlock}
Let $B = S[x_1,\dots, x_d]$, where $S$ is a ring, and $f(x_1,\dots,x_d)$ a non-zero homogeneous polynomial of degree $\nn$ in $B$. The graded homomorphism
$\Phi: B(-\nn) \stackrel{\cdot f}{\to} B$ induces a homomorphism in each degree $i$. We call the induced map $\Phi^f_i :  [B(-\nn)]_i \to B_i$. This map can be described explicitly once we fix a basis of $B_i$. 
In this note, we will always use the Lex (monomial) order on $x_1,\dots, x_d$. For instance, when $d = 3$, $\nn = 2$, $f = A_0 x_1^2 + A_1 x_1x_2 + A_2x_3^2$, where $A_i \in S$, then $\Phi^f_3$ has a matrix representation
\begin{equation}
\begin{bmatrix}
A_0    &0       	&0 	\\
A_1  & A_0	&0	\\
0        &0 		& A_0 \\	
0        & A_1	&0	\\
0           &0		& A_1 \\
 A_2   &0		&0 \\
0          &0		&0\\
0         &0 		&0\\
0         & A_2 &0\\
0          &0		& A_2
\end{bmatrix}
\,
\overbrace{
\begin{array}{:l}
{x_1}^3  \\
{x_1}^2{x_2}  \\
{x_1}^2{x_3} \\
{x_1}{x_2}^2 \\
{x_1}{x_2}{x_3} \\
{x_1}{x_3}^2 \\
{x_2}^3 \\
{x_2}^2{x_3} \\
{x_2}{x_3}^2 \\
{x_3}^3
\end{array}
}^{basis} 
.
\end{equation}
Here each column is the presentation of $x_i f$ by the monomials of degree $3$.
\end{remark}

To prove our main theorem, we need to compute the height of the maximal minors of $\Phi^f_i$.  
The following lemma generalizes \cite[Theorem A.2.60]{Eis05} in which there is no gap between nonzero entries. 

\begin{lemma}\label{lemStaircase}
Let $R$ be a ring and $g_1, \dots, g_t$ are elements of $R$. Let $M=(m_{ij})$ be a $r\times s$ matrix  ($r\geq s$) satsifying 
\begin{enumerate}[\indent $(1)$]
	\item each entry of $M$ is either zero or $g_1,\dots,g_t$, 
	\item the $i$-th non zero entry of each column is $g_i$, 
	\item the last non-zero entry of each column is $g_t$, and 
	\item if $m_{ij}=g_q$, then $m_{k j+1}=g_q$ for some $k>i$.
\end{enumerate}
	Then $I_s(M)=(g_1,\dots,g_t)^s$.
\end{lemma}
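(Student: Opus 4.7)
The inclusion $I_s(M)\subseteq (g_1,\dots,g_t)^s$ is immediate since each entry of $M$ lies in $(g_1,\dots,g_t)$, so every $s\times s$ minor, being a polynomial of degree $s$ in the entries, lies in $(g_1,\dots,g_t)^s$. The substantive task is the reverse inclusion, which I plan to establish by exhibiting each monomial $g_{j_1}g_{j_2}\cdots g_{j_s}$ (with $1\le j_1\le j_2\le\cdots\le j_s\le t$) in $I_s(M)$ via a descending induction on the multi-index $(j_1,\dots,j_s)$, comparing their weakly-decreasing rearrangements under the lexicographic order. The lex-maximum $(t,t,\dots,t)$, corresponding to $g_t^s$, will serve as the base case of this outer induction.

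Write $\rho(q,k)$ for the row of $g_q$ in column $k$. Conditions (2)--(3) of the lemma give $\rho(1,k)<\rho(2,k)<\cdots<\rho(t,k)$ for each $k$, and condition (4) gives $\rho(q,k)<\rho(q,k+1)$ for each $q$. For the multi-index $(j_1,\dots,j_s)$, set $r_k:=\rho(j_k,k)$; these are strictly increasing by the monotonicities together with $j_k\le j_{k+1}$. Let $N$ denote the $s\times s$ submatrix of $M$ on rows $r_1<\cdots<r_s$; its diagonal is $g_{j_1},\dots,g_{j_s}$. For the base case $(j)=(t,\dots,t)$, the submatrix $N$ is upper triangular with determinant $g_t^s$: for $k>l$ one has $\rho(t,k)>\rho(t,l)$, which is strictly below the last nonzero row of column $l$, forcing $N_{k,l}=0$. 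For the inductive step I expand $\det N$ via Leibniz: the identity contributes $g_{j_1}\cdots g_{j_s}$, while each non-identity $\sigma$ contributing a nonzero term gives $\pm\prod_k g_{q_k(\sigma)}$, where $q_k(\sigma)$ is uniquely determined by $\rho(q_k(\sigma),k)=r_{\sigma(k)}$.

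The crux is the following combinatorial claim, to be proved by a secondary induction on $s$: for every non-identity $\sigma\in S_s$ with $\prod_k N_{\sigma(k),k}\ne 0$, the sequence $(q_1(\sigma),\dots,q_s(\sigma))$ sorted in weakly decreasing order is lexicographically strictly greater than $(j_s,j_{s-1},\dots,j_1)$. If $\sigma(s)\ne s$, let $l=\sigma^{-1}(s)<s$; the staircase condition yields $\rho(q_l(\sigma),l)=r_s=\rho(j_s,s)>\rho(j_s,l)$, hence $q_l(\sigma)>j_s=\max_k j_k$, settling the first lex component. If $\sigma(s)=s$, then $q_s(\sigma)=j_s$ and $\sigma$ restricts to a non-identity permutation of $\{1,\dots,s-1\}$; the $s$-induction hypothesis, applied to the first $s-1$ columns of $M$ (which still satisfy the staircase conditions), yields that $(q_1(\sigma),\dots,q_{s-1}(\sigma))$ exceeds $(j_1,\dots,j_{s-1})$ in the sort-decreasing lex order, and prepending the common top element $j_s$ preserves the strict inequality.

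Granting the key claim, each non-identity term in the Leibniz expansion of $\det N$ is a monomial whose sorted-decreasing multi-index is strictly greater than $(j_s,\dots,j_1)$, so it already belongs to $I_s(M)$ by the outer inductive hypothesis; since $\det N\in I_s(M)$, we deduce $g_{j_1}\cdots g_{j_s}\in I_s(M)$, completing the outer induction. The main obstacle I anticipate is the combinatorial bookkeeping in the key claim, in particular the careful tracking of the staircase monotonicities and the management of the sort-decreasing lex comparison in the recursive $\sigma(s)=s$ case; once this is handled cleanly, the rest of the argument is a routine unwinding of the two nested inductions.
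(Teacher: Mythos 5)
Your proof is correct, but it takes a genuinely different route from the paper's. The paper first reduces to the generic matrix over $\mathbb{Z}[y_1,\dots,y_t]$ and then runs a double induction on the number of elements $t$ and the number of columns $s$: expanding minors along the rows containing $y_1$ (after deleting suitable rows and columns) yields $y_1^u(y_1,\dots,y_t)^{s-u}\subseteq I_s(M)$ for all $u$, and passing to the quotient by $(y_1)$ together with the induction on $t$ handles $(y_2,\dots,y_t)^s$. You instead argue monomial by monomial: for each weakly increasing $(j_1,\dots,j_s)$ you pick the rows $r_k=\rho(j_k,k)$ (strictly increasing by the staircase monotonicities), note that the corresponding $s$-minor equals $g_{j_1}\cdots g_{j_s}$ plus Leibniz terms whose sorted exponent vectors are strictly lex-larger (your key claim, proved by the inner induction on $s$), and close with a descending induction on the lex order, the triangular minor giving $g_t^s$ as base case. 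Your approach buys a direct, specialization-free argument valid over any ring, exhibiting for each generator of $(g_1,\dots,g_t)^s$ an explicit minor witnessing membership modulo lex-larger monomials; the paper's version is quicker to write and relies on the standard generic-matrix and modding-out devices. One small point to tighten: in the case $\sigma(s)=s$ you prepend ``the common top element $j_s$,'' but $j_s$ need not dominate $q_1(\sigma),\dots,q_{s-1}(\sigma)$; if some $q_k(\sigma)>j_s$ the strict lex inequality is immediate from the first entry, and otherwise your prepending argument applies (equivalently, inserting a common element into two sorted sequences preserves strict lex order), so the issue is only cosmetic.
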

	
\begin{proof}
It suffices to show the statement that if $R' = \mathbb{Z}[y_1\cdots y_t]$ and $M=(m_{ij})$ a $r\times s$ matrix  ($r\geq s$) satisfying
	\begin{enumerate}
		\item each entry of $M$ is either zero or $y_1,\dots,y_t$, 
		\item the $i$-th non zero entry of each column is $y_i$,
		\item the last non-zero entry of each column is $y_t$, and 
		\item if $m_{ij}=y_q$, then $m_{k j+1}=y_q$ for some $k>i$,
	\end{enumerate}
then $I_s(M)=(y_1,\dots,y_t)^s$.

Clearly, $ I_s(M)\subseteq (y_1,\dots,y_t)^s$. We now show the other inclusion.  
We prove by induction on the number of variables $t$ and the number of columns $s$ of the matrix. Suppose $t=1$. The case of $s=1$ is clear, by the pattern of the matrix, that $I_t(M)=(y_1)$. 
In fact, if $s>1$, then it is easy to see from item (4) that $I_s(M)=(y_1)^s$ settling the base case of induction. Now suppose by induction hypothesis, the result is true for all matrices $M$ following the pattern in the hypothesis, with entries in the ring $T=\mathbb{Z}[y_1,\dots,y_q]$ where $q\leq t-1$ and for all $s$.\\

If $s=1$, in which case $M$ has only one column, then clearly  $I_s(M)=(y_1,\dots,y_t)$. By induction hypothesis on $s$ assume that the result is true for all matrices $M$, satisfying the pattern in the hypothesis, with the number of columns being strictly less than $s$. Now let  $M$ be a matrix with $s$ columns and $j_1,\dots,j_s$ be indices such that $m_{1j_1}=m_{2j_2}=\cdots=m_{sj_s}=y_1$. If $M'$ denotes the matrix obtained by deleting the $j_1$-th row and the first column of $M$, then by induction hypothesis on $s$, it is clear that $I_{s-1}(M')=(y_1,\dots,y_t)^{s-1}$. 
Since the $j_1$-th row of $M$ has $y_1$ appearing in the first entry and zero otherwise, we have $f=y_1\cdot g$ where $g\in I_{s-1}(M')=(y_1,\cdots,y_t)^{s-1}$. 
Thus one has $y_1(y_1,\dots,y_t)^{s-1}\subseteq I_s(M)$. We show similarly that $y_1^u(y_1,\dots,y_t)^{s-u}\subseteq I_s(M)$. Let $M_u$ be the matrix obtained by removing the rows $j_1,\dots,j_u$ and the columns $1,\dots,u$ from $M$. Then by induction hypothesis $I_{s-u}(M_u)=(y_1,\dots,y_t)^{s-u}$. Now consider $M$ and an $s$-minor $f'$ obtained by choosing the rows $j_1,\dots,j_u,i_1,\dots,i_{s-u}$ where $1\leq i_q\leq r, i_q\not\in\{j_1,\dots,j_{u}\}$ for $1\leq q\leq s-u$, of $M$. Then we have that $f'=y_1^ug'$ where $g'\in I_{s-u}(M_u)=(y_1,\dots,y_t)^{s-u}$. Thus we have $y_1^u(y_1,\dots,y_t)^{s-u}\subseteq I_s(M)$ for $1\leq u\leq s$. \\

Now, consider the ring $\overline{T}=T/(y_1)$ and the matrix $\overline{M}$ obtained by extending the matrix $M$ to  $\overline{T}$. $\overline{M}$ follows the pattern in the hypothesis and hence by induction hypothesis on $t$, we have $I_{s}(\overline{M})=(\overline{y_2},\cdots,\overline{y_t})^{s}$. 
Thus we have $I_s(M) + (y_1) \supseteq (y_2,\dots,y_t)^s$. It suffices to show that for each monominal generator $m$ for $(y_2,\dots,y_t)^s$ is in $I_s(M)$. This follows immediately as $y_i$ are variables. Thus, we have $(y_1,\dots,y_t)^s\subseteq I_s(M)$. 
\end{proof}

\begin{thm}\label{mainThm}
Let $R = \kk[x,y,z]$, where $\kk$ is a field of characteristic $0$. 
Let $M$ be a $4 \times 3$ matrix whose entries are in $R$ and let $I = I_3(M)$. 
If $M$ is general of type $(1,2, \nn)$, where $\nn > 2$ (see Case 2 in \Cref{defGenMat}),
then the defining ideal of the Rees algebra $R[It]$ is minimally generated by 
\begin{equation}
\def\arraystretch{1.2}
\begin{array}{c;{2pt/2pt}c|c}
\text{~} & \text{bidegree} & \text{number of elements}  \\
\hline
l_1 & (1,1) & 1 \\ \hline
l_2 & (2,1) & 1 \\ \hline
l_3 & (\nn,1) & 1 \\ \hline
\mathcal{A}_0 & (0, 3\nn+2 ) &  1 \\ \hline
\mathcal{A}_{\nn - i} &  (\nn-i, 3i+1 )&   {2 + i \choose 2}  \\ \hline
\mathcal{A}_\nn & (\nn, 3) & 1  

\end{array}
\end{equation}
 for $1 \le i \le \nn-1$.  
In particular, the defining ideal is minimally generated by ${\nn+2 \choose 3} + 4$ elements. 
\end{thm}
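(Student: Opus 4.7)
The approach is to verify that for $M$ general of type $(1,2,\nn)$ the hypotheses of \Cref{setupProp} are satisfied, and then use the duality $\mathcal{A}_i\cong \Hom_S(\Sym(I)_{\nn-i}, S(-n))$ from \Cref{perfPair} to reduce the problem to computing the $S$-resolution of $\Sym(I)_k$ for each $k \in \{0, 1, \ldots, \nn\}$. First I would verify that a general such $M$ yields $I=I_3(M)$ of height $2$ with $I_2(M)$ of height $3$, giving a codimension-$2$ perfect $R$-ideal satisfying $(G_3)$; this follows from the general-position framework of \Cref{corGeneralHeight} together with the non-emptiness assertion supplied by the example alluded to in the theorem. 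Under these hypotheses $\Sym_R(I)=B/(l_1,l_2,l_3)$ is a bigraded complete intersection, and the Koszul complex of $l_1,l_2,l_3$ restricted to the $x$-degree $k$ strand gives the graded free $S$-resolution of $\Sym(I)_k$ explicitly.

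The bidegrees then follow from explicit rank computations. For $k=0$, $\Sym(I)_0=S$, so $\mathcal{A}_{\nn}\cong S(-3)$, producing the generator of bidegree $(\nn,3)$. For $1\le k\le \nn-1$, the restricted Koszul complex collapses to
\begin{equation}
0\to S(-2)^{\binom{k-1}{2}}\xrightarrow{\alpha_2} S(-1)^{k^2}\xrightarrow{\alpha_1} S^{\binom{k+2}{2}}\to \Sym(I)_k\to 0,
\end{equation}
which gives $r_0=\rk \Sym(I)_k=2$ and, via the formulas in \Cref{reviewBE74}, $s_1=r_1-r_2=k^2-(k-1)(k-2)=3k-2$. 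I would then invoke \Cref{thmRk2}: if $\Sym(I)_k$ satisfies Serre's condition $(S_2)$, the complex from Theorem A is exact, and by \Cref{lemRk2} it forms the initial segment of the minimal graded free $S$-resolution of $\coker\alpha_1^*$. The skew-symmetric matrix representing $\eta\circ m_\wedge\circ(\ide\otimes a_1)$ then has $f_0=\binom{k+2}{2}$ columns with $T$-degree-$s_1$ entries; shifting by $(-n)=(-3)$ via duality produces exactly $\binom{k+2}{2}$ minimal generators of $\mathcal{A}_{\nn-k}$ in bidegree $(\nn-k,3k+1)$. The remaining case $k=\nn$ is handled by \Cref{thm5}: an analogous computation gives $r_0=1$ and $s_1=3\nn-1$, so once birationality of $\phi$ is established, \Cref{thm5} yields the single generator of $\mathcal{A}_0$ in bidegree $(0, s_1+3)=(0,3\nn+2)$.

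The main obstacle is arranging all of these conditions (heights of Fitting ideals of $M$, Serre's $(S_2)$ on each $\Sym(I)_k$, and birationality of $\phi$) simultaneously for a single $M$. Each is a general property in the sense of \Cref{corGeneralHeight}, and a finite intersection of non-empty dense open subsets remains non-empty dense open; thus it suffices to exhibit one matrix witnessing every requirement. This is the content of \Cref{bigExample}: the verification reduces to writing down, via \Cref{rmkMatBlock}, the block matrices $\Phi^f_i$ representing multiplication by a generator of each relevant Fitting ideal, and then applying \Cref{lemStaircase} to recognize their maximal minors as powers of the homogeneous maximal ideal of $S$, which yields the maximal possible grade at every step and also forces the gcd condition in \Cref{thm5}$(4)$. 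Summing contributions gives $3+1+\sum_{k=1}^{\nn-1}\binom{k+2}{2}+1=4+\binom{\nn+2}{3}$ by the hockey-stick identity, matching the asserted count and completing the proof.
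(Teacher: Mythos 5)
Your overall architecture is the same as the paper's: reduce to the hypotheses of \Cref{setupProp}, use the duality of \Cref{perfPair} to replace $\mathcal{A}_i$ by the dual of the strand $\Sym(I)_{\nn-i}$, read the strand resolutions off the Koszul complex, apply \Cref{thmRk1,thmRk2} (and \Cref{thm5} for $\mathcal{A}_0$), and then observe that all required conditions are height conditions, hence open by \Cref{corGeneralHeight}, so that everything reduces to exhibiting one matrix in the open set. Your bookkeeping of ranks and shifts ($s_1=3k-2$ for the middle strands, $s_1=3\nn-1$ for the top strand, the counts $\binom{k+2}{2}$, and the total $\binom{\nn+2}{3}+4$) is correct and matches the paper.

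The genuine gap is in the last step, which is precisely the part the paper identifies as the hardest: the non-emptiness of the open set, i.e.\ the verification of \Cref{bigExample}. You assert that this "reduces to writing down the block matrices $\Phi^f_i$ and applying \Cref{lemStaircase} to recognize their maximal minors as powers of the homogeneous maximal ideal of $S$, which yields the maximal possible grade at every step and also forces the gcd condition in \Cref{thm5}(4)." That mechanism does not work. \Cref{lemStaircase} applies only to $\alpha_2$, whose columns (built from $\Phi^{-l_1}_i$ stacked over $\Phi^{l_0}_i$) do contain $T_0,\dots,T_3$ in the staircase pattern and give $\hgt I(\alpha_2)=4$. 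The first differentials $\alpha_1=[\Phi^{l_0}_i\mid\Phi^{l_1}_i\,(\mid\Phi^{l_2}_i)]$ do not satisfy the staircase hypotheses, and their determinantal ideals are emphatically not powers of $(T_0,\dots,T_3)$: in the middle strands one needs (and only gets) $\hgt I(\alpha_1)=3$, and in the top strand $\nn$ the relevant height is $2$, so "maximal possible grade at every step" is false exactly where the birationality/gcd condition of \Cref{thm5} has to be checked. Establishing these two height bounds is where all the work lies: the paper localizes at $T_0$, replaces $l_1$ by $g_1=A_0y^2+A_1yz+A_2z^2$ and invokes \Cref{23case} (which is where \Cref{lemStaircase} actually enters, after a nontrivial reduction), uses the cyclic symmetry in $T_0,T_1,T_2$ for the middle strands, and for the top strand performs row operations on the corner block $N$, introduces an auxiliary variable $T$, uses \Cref{lemOffDiag}, and finally adjusts the free parameter $\gamma$ in the example by another application of \Cref{corGeneralHeight}. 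None of this is recoverable from your sketch, and since the theorem's proof is by design nothing more than "openness plus one witness," the missing witness verification is the core of the argument rather than a routine check.
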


\begin{proof}
The general property $\mathcal{P}$ needs to satisfy the following conditions. 
\begin{enumerate}
\item $I_3(M)$ is of height $2$, and $I$ satisfies $(G_3)$, i.e, $\hgt I_3(M) \ge 2$ and $\hgt I_2(M) \ge 3$. 

\item For all $1 \le i \le \nn$, $\rk \Sym_i = 1$ or $2$, and  the complex in \Cref{thmRk1} or \Cref{thmRk2} is exact. (This exactness can be verified by checking the height of $I (\alpha_j)$, where $\alpha_j$ are the differentials in the complex by \Cref{thmBE73}.)
\end{enumerate}
Notice that we have finitely many conditions. 
If each condition is general corresponding to a non-empty open subset $U_k$, 
then the desired non-empty subset for $\mathcal{P}$ is the intersection of the open subsets $U_k$. 
To show that each condition is general, first we apply \Cref{corGeneralHeight}(b) to show the existence of such open subsets.
To finish the proof, we need to show that each open subset is not empty. 
This can be done by demonstrating an example satisfying the conditions in (1) and (2). 
We do this in \Cref{bigExample}.
The generating degrees follow from \Cref{thmRk1,thmRk2}. 
This completes the proof.
\end{proof}

\begin{example}\label{bigExample}
Let $R = \kk[x,y,z]$, where $\kk$ is a field of characteristic $0$, $\m = (x,y,z)R$, and $M$ the matrix
\begin{equation}
M = \begin{bmatrix}
x & y^2 & \gamma z^{\nn} \\
y & z^2 & yz^{\nn-1}\\
z & x^2 & y^{\nn}  \\
0 &xy + yz +xz  & 0 
\end{bmatrix},
\end{equation}
where $\gamma \in \kk \setminus \{0\}$. We will show that there exists a nonzero $\gamma$ such that the ideal $I = I_3(M)$ satisfies the desired properties required in the proof of \Cref{mainThm}.\\

\noindent\textbf{Claim 1:} $I$ is a height two perfect ideal satisfying the $G_3$ condition. 
\begin{proof}[Proof of Claim 1]
By Krull's principal ideal theorem, it suffices to show that $(I,x)$ is of height three. (We will use this technique multiple times in the proof.) Equivalently, we show that height $I (R/xR)$ is two. By abuse of notation, we use the same symbols for $y,z$ for their images in $\overline{R} := R/xR$. Let 
\begin{equation}
\overline{M} = \begin{bmatrix}
0 & y^2 & \gamma z^\nn \\
y & z^2 &  yz^{\nn-1}\\
z &  0 & y^\nn  \\
0 & yz   & 0 
\end{bmatrix}.
\end{equation}
Notice that from the determinant after deleting the second row of $\overline{M}$ is $\gamma y z^{\nn+2}$, and 
the determinant after deleting the last row of $\overline{M}$ is $y^{\nn+3} - y^3 z^{\nn} + \gamma z^{\nn+3}$. 
Hence any prime containing $I_3(\overline{M})$ contains $(y,z)$. 
Since $I_3(M) \overline{R}= I_3 (\overline{M})$, this shows that $\hgt I \ge 2$. Then by the Hilbert-Burch theorem \cite[Theorem 20.15]{Eis}, $\hgt I = 2$. \\

For the $G_3$ condition, it suffices to show that $\hgt F_2 (M) = I_2 (M) \ge 3$ (\Cref{Gs}). 
We will show that any prime ideal containing $I_2(M)$ is of height $3$ (indeed, it will turn out that $(x,y,z)$ is the only prime containing $I_2(M)$). 
Let $\p$ be a prime ideal containing $I_2(M)$. 
From the minor (of rows 2,3 and columns 1,3 of $M$) $\begin{vmatrix} y & yz^{\nn-1} \\ z & y^{\nn} \end{vmatrix} = y^{\nn+1} - yz^{\nn} = y^{\nn} (y-z)$, 
we see that $y$ or $y-z$ is in $\p$.
If $y \in \p$, then from the minors $yx^2-z^3,  x^3-zy^2$ (from the first two columns of $M$), we see that $x,z$ are also in $\p$. 
So, $\p = (x,y,z)$. 
If $y-z \in \p$, then from the minor $y^3-xz^2$, we have $y^3 - xy^2 = y^2(x-y)$ is in $\p$. 
If $y$ is in $\p$, then we are done by the previous argument. 
So, we assume that $x-y \in \p$. 
Then from the minor $x ( xy+yz+xz)$, we see that $x(3x^2) = 3x^3$ is in $\p$ as $x-y,y-z \in \p$.  
Hence $x, y-x, z-x$ is in $\p$, and this implies $\p = (x,y,z)$.
Thus, we have $\hgt I_2(M) = 3$, and this completes the proof of claim 1.
\end{proof}

\noindent\textbf{Claim 2:} Let $S = \kk[T_0,T_1,T_2,T_3], B = R \otimes_{\kk} S = S[x,y,z]$, and $\delta = 1 + 2 + \nn -3 = \nn$. Then we have the following.

\begin{enumerate}[\indent$(a)$]
\item $[\Sym_R(I)]_0$ is $S$-free of rank $1$, and 
for $1 \le i \le 2$, the graded $S$-resolutions of $[\Sym_R(I)]_i$ are 
\begin{equation}
0 \to S(-1) \stackrel{\alpha_1}{\to} S^{3} \to [\Sym_R(I)]_1 \to 0
\end{equation}
and
\begin{equation}\label{eqSym1}
0 \to S^{3}(-1) \oplus S(-1)  \stackrel{\alpha_1}{\to} S^{6} \to [\Sym_R(I)]_2 \to 0,
\end{equation}
respectively, and $\hgt I(\alpha_1) = 3$.

\item For $3 \le i \le \delta -1$, the graded $S$-resolution of $[\Sym_R(I)]_i$ is 
\begin{equation}\label{midRes}
0 \to S^{2+i-3 \choose 2}(-2) \stackrel{\alpha_2}\to S^{2+ i-1 \choose 2}(-1) \oplus S^{2+i-2 \choose 2}(-1)  \stackrel{\alpha_1}{\to} S^{2 + i  \choose 2} \to [\Sym_R(I)]_i \to 0,
\end{equation}

and $\hgt I(\alpha_1) = 3$ and $\hgt I(\alpha_2) = 4$.

\item The graded $S$-resolution of $[\Sym_R(I)]_\delta=[\Sym_R(I)]_{\nn}$ is 
\begin{equation}
0 \to S^{2+\nn-3 \choose 2}(-2)  \stackrel{\alpha_2}\to S^{2+ \nn-1 \choose 2}(-1) \oplus S^{2+\nn-2 \choose 2}(-1) \oplus S(-1)  \stackrel{\alpha_1}\to  S^{2 + \nn  \choose 2} \to[\Sym_R(I)]_\nn \to 0,
\end{equation}
and $\hgt I(\alpha_1) = 2$. 
\end{enumerate}

\begin{proof}[Proof of claim 2] 
The shape of the complexes follows from \Cref{pieceRes}. 
The rank of each free module in these complexes can be calculated easily as they are the number of monomials in $(x,y,z)$ of degree $i$,
and the rank of $[\Sym_R(I)]_i = 1$ if $i = 0,1,\delta$ and $2$ otherwise.
By \Cref{thmRk1,thmRk2}, to check the exactness of these complexes, it suffices to verify the claimed heights of $I(\alpha_j)$ (\Cref{thmBE73}). \\

We will use the notation $[ \, l_0 \, l_1 \, l_2] = [\, T_0 \, \dots \, T_3] \, M$. That is 
\begin{align}
l_0 &= xT_0+y T_1+z T_2,\\
l_1 &= y^2T_0 +z^2T_1 + x^2T_2+(xy+yz+zx) T_3,\\
l_2 &= \gamma z^{\nn} T_0 + yz^{\nn-1} T_1 + y^{\nn}T_2.
\end{align}
 First, we show that $I(\alpha_2) \ge 4$ for the cases (b) and (c). By \Cref{rmkMatBlock}, the matrix representation of $\alpha_2$ is 
\begin{equation}
\def\arraystretch{1.8}
\left[
    \begin{array}{c}
        \Phi^{-l_1}_{i} \\ \hdashline[2pt/2pt] 
        \Phi^{l_0}_{i}
    \end{array}
\right].
\end{equation}
Each column consists of exactly 7 non-zero entries including $T_0,T_1,T_2,T_3$, and it satisfies the conditions of \Cref{lemStaircase}. Thus, we conclude that $\hgt I(\alpha_2) = 4$.\\

When $i = 1$, $\alpha_1$ involves only $l_0$, and the map $\alpha_1 = \Phi^{l_0}_1$ is the column matrix consists of $T_0,T_1,T_2$. Hence $\hgt I(\alpha_1) = 3$. 
Notice that for $2 \le i \le \delta -1$ the free $S$-resolution in \cref{eqSym1,midRes} involves only the elements $l_0,l_1$. 
To show that $\hgt I(\alpha_1) \ge 3$, by Krull's principal ideal theorem, it suffices to show that $\hgt I(\alpha_1)  + (T_3) \ge 4$. Equivalently, we show that $\hgt I(\alpha_1)(S/(T_3)) \ge 3$. In fact, $[\Sym_R(I)]_i = [\Sym_R(J)]_i$ for $J = I_2 (M')$, where 
\begin{equation}
M' = \begin{bmatrix}
x & y^2 \\
y & z^2 \\
z & x^2 \\
\end{bmatrix}.
\end{equation}

So, we may assume that $S = \kk[T_0,T_1,T_2], M = M'$, and $I = J$. 
We show that $\hgt \sqrt{I(\alpha_1)} = 3$. 
Since $[l_0~ l_1] = [T_0~ T_1~ T_2] M$, and the shape of $M$, the matrices $\Phi^{l_0}_i, \Phi^{l_1}_i$ is stable under the action by the cycle $(T_0~T_1~T_2)$. In other words, if $f(T_0,T_1,T_2) \in I(\alpha_1)$, then $f(T_1,T_2,T_0), f(T_2,T_0,T_1)$ are also in $I(\alpha_1)$. Thus, to show that $\hgt \sqrt{I(\alpha_1)} = 3$, if suffices to show that $T_0 \in \sqrt{I(\alpha_1)}$.\\

Assume to the contrary $T_0$ is not in $\sqrt{I(\alpha_1)}$. Then $\sqrt{I(\alpha_1)} S_{T_0}$ is a proper ideal of $S_{T_0}$. Consider 
\begin{equation}
g_0 = l_0 \quad g_1 = A_0 y^2+ A_1 yz + A_2 z^2,
\end{equation}
where $A_0 = T_0^3 + T_1^2 T_2, A_1 = 2T_1T_2^2, A_2 = T_0^2T_1 + T_2^3$, and $\mathcal{K}' := \mathcal{K}(g_0,g_1;B)$. 
Even though $(g_0,g_1) \subsetneq (l_0,l_1)$ in $B$, these two ideals agree in $B_{T_0}$. (In the localization at $T_0$, $l_0 = xT_0 + yT_1 + zT_2 \Longleftrightarrow x = l_0 - yT_1/T_0 - zT_2/T_0$, so we may replace $x$ by a combination of $l_0,y,z,T_0,T_1,T_2$.)
In other words, the localization of $[\mathcal{K}']_i$ at $T_0$ is also a free $S_{T_0}$-resolution for $([\Sym_R(I)]_i)_{T_0}$. 
By \Cref{23case} below, we conclude that $(A_0,A_1, A_2)S_{T_0} \in \sqrt{ I(\alpha_1') S_{T_0} }$. Notice that $(A_0,A_1, A_2)S_{T_0}$ is a unit ideal in $S_{T_0}$. Hence  the Fitting lemma \cite[Cor-Def 20.4, Cor.\ 20.5]{Eis} imples that $I(\alpha_1)B_{T_0} = S_{T_0}$ as well. This is a contradiction and completes the proof of $\hgt(\alpha_1) = 3$. \\

It remains to show that $\hgt I(\alpha_1) = 2$ in the case (c). Write $r = \rank \alpha_1$. 
If $T_0^r \in I(\alpha_1)$, then modding out by $(T_1,T_3)$, we see that the image of $T_2^r$ is in the image of $I(\alpha_1)$ in $S/(T_1,T_3)$. Hence we are done by Krull's principal ideal theorem\footnote{Computations using Macaulay2 \cite{M2} for small $\nn$ shows that this case does not happen, but this is necessary for our proof.}.
Now, suppose that $T_0 \not\in \sqrt{I(\alpha_1)}$. We will show that $\hgt \sqrt{I(\alpha_1)}S_{T_0} = 2$. By the assumption, $ \sqrt{I(\alpha_1)}S_{T_0} \neq S_{T_0}$. As in the previous case, we will use 
\begin{equation}
g_0 = l_0, g_1 = A_0 y^2+ A_1 yz + A_2 z^2, g_2 = l_2,
\end{equation}
where $A_0 = T_0^3 + T_1^2 T_2, A_1 = 2T_1T_2^2, A_2 = T_0^2T_1 + T_2^3$, and $\mathcal{K}' := \mathcal{K}(g_0,g_1,g_2;B)$. 
Then the Koszul complex $\mathcal{K}'$ in degree $\nn$ is a free $S_{T_0}$-resolution of $[\Sym_R(I)]_{\nn}$. If we name $\alpha_1'$ the first differential  of $[\mathcal{K}']_{\nn}$, then $I(\alpha_1)S_{T_0} = I(\alpha_1')S_{T_0}$ since they are both the Fitting ideal of the same module. 
Therefore, we will show that $\hgt  I(\alpha_1')S_{T_0} = 2$. 
Notice that $\alpha'$ is 
\begin{equation}
\def\arraystretch{1.75}
\left[ 
\begin{array}{c;{2pt/2pt}c;{2pt/2pt}c}
\Phi^{g_0}_\nn & \Phi^{g_1}_\nn & \Phi^{g_2}_\nn 
\end{array} \right]
 = 
\left. \vphnmm \right[
\overbrace{
\underbrace{
\begin{array}{cccc}
T_0 & 0 & \cdots & 0 \\ 
T_1 & T_0 & \cdots & 0 \\
\vdots & \vdots &  \ddots & 0\\
* & * & \cdots & T_0 \\
\hdashline[1pt/1pt]~
* & * & \cdots & * \\
\vdots & \vdots &  \ddots & \vdots \\
\vdots & \vdots &  \ddots & \vdots \\
\vdots & \vdots &  \ddots & \vdots \\
\vdots & \vdots &  \ddots & \vdots \\
* & * & \cdots & * 
\end{array}
}_{ {2 + \nn - 1\choose \nn - 1}~\text{columns}}  
}^{\Phi^{g_0}_{\nn}} 
\hspace{-1em}\begin{array}{c;{2pt/2pt}c}
~&~\\ ~&~\\ ~&~\\ ~&~\\ ~&~\\ ~&~\\ ~&~\\~&~\\~&~\\
\end{array}\hspace{-1em}
\overbrace{
%
\begin{array}{ccc}
* & \cdots & *\\
* & \cdots & *\\
\vdots & \vdots & \vdots \\
\vdots & \vdots & \vdots \\
\hdashline[1pt/1pt]~ 
\vdots & \vdots & \vdots \\
\vdots & \vdots & \vdots \\
\vdots & \vdots & \vdots \\
\vdots & \vdots & \vdots \\
\vdots & \vdots & \vdots \\
* & \cdots & *
\end{array}
\hspace{-1em}\begin{array}{c;{2pt/2pt}c}
~&~\\ ~&~\\ ~&~\\ ~&~\\ ~&~\\ ~&~\\ ~&~\\~&~\\~&~\\
\end{array}\hspace{-1em}
\underbrace{
\begin{array}{cccc}
0 & 0 & \cdots & 0 \\
\vdots & \vdots & \cdots & \vdots \\
\vdots & \vdots & \cdots & \vdots \\
\vdots & \vdots & \cdots & \vdots \\
\hdashline[1pt/1pt]~ 
A_0 & 0 & \cdots & 0 \\
A_1 & A_0 & \cdots & 0 \\
A_2 & A_1 & \ddots & 0 \\
\vdots & \vdots & \vdots & \vdots \\
0 & 0 & \cdots & A_1 \\
0 & 0 & \cdots & A_2
\end{array}
}_{ {1 + \nn -2 \choose \nn-2 }~\text{columns}}
}^{\Phi^{g_1}_{\nn}}
\hspace{-1em}\begin{array}{c;{2pt/2pt}c}
~&~\\ ~&~\\ ~&~\\ ~&~\\ ~&~\\ ~&~\\ ~&~\\~&~\\~&~\\
\end{array}\hspace{-1em}
\overbrace{
\begin{array}{c}
0 \\ \vdots \\ \vdots \\ \vdots \\ \hdashline[1pt/1pt]~  T_2 \\ 0 \\ 0 \\ \vdots \\ T_1 \\ \gamma T_0 
\end{array}
}^{\Phi^{g_2}_\nn}
\left. \vphnmm \right]
\hspace{-1em}\begin{array}{c;{2pt/2pt}c}
~&~\\ ~&~\\ ~&~\\ ~&~\\ ~&~\\ ~&~\\ ~&~\\~&~\\~&~\\
\end{array}\hspace{-1em}
\overbrace{
\begin{array}{c}
x^\nn \\ x^{\nn-1}y \\ \vdots \\ x z^{\nn-1} \\ \hdashline[1pt/1pt] y^\nn \\ y^{\nn-1}z \\ \vdots \\ \vdots \\ y z^{\nn-1} \\ z^\nn.
\end{array}
}^{basis}
\end{equation} 
Since $T_0$ is invertible in $S_{T_0}$, $I(N) \subset I(\alpha') S_{T_0}$, where $N$ is the matrix obtained from the lower right corner of $\alpha'$.
\begin{equation}
N = 
\left. 
\hspace{-1em}\begin{array}{cc}
~&~\\ ~&~\\ ~&~\\ ~&~\\ ~&~\\ ~&~\\ 
\end{array}\hspace{-1em}
 \right[
\begin{array}{ccccc}
A_0 & 0 & \cdots & 0 & 0 \\
A_1 & A_0 & \cdots & 0 &  0 \\
A_2 & A_1 & \ddots & 0 & 0 \\
\vdots & \vdots & \vdots & \vdots & \vdots \\
0 & 0 & \cdots & A_2 & A_1 \\
0 & 0 & \cdots & 0 & A_2
\end{array}
\hspace{-1em}\begin{array}{c;{2pt/2pt}c}
~&~\\ ~&~\\ ~&~\\ ~&~\\ ~&~\\ ~&~\\ 
\end{array}\hspace{-1em}
\left. \begin{array}{c}
T_2 \\ 0 \\ 0 \\ \vdots \\ T_1 \\ \gamma T_0 
\end{array} \right].
\end{equation} 
We will show that after modifying $N$, $\hgt I(N) \ge 2$, so $\hgt  I(\alpha_1')S_{T_0} = 2$ by the containment above. 
(We note that $N$ and its variations $N', N'', N_T$ below will be matrices of rank $j$ and size $(j + 1) \times j$ (with different $j$).) 
Since $T_0$ is invertible in $S_{T_0}$, we may multiply the last row by $-T_2/\gamma T_0$ and add it to the first row. 
Also, we multiply the last row by $-T_1/\gamma T_0$ and add it to the second last row. 
We call this new matrix $N'$
\begin{equation}
N' = 
\left[
\begin{array}{ccccc;{2pt/2pt}c}
A_0 & 0 & \cdots & 0 & -A_2 \, T_2/\gamma T_0 & 0\\
A_1 & A_0 & \cdots & 0 &  0 & 0 \\
A_2 & A_1 & \ddots & 0 & 0 &0 \\
\vdots & \vdots & \vdots & \vdots & \vdots & \vdots \\
0 & 0 & \cdots & A_2 & A_1 - A_2T_1/\gamma T_0& 0 \\
\hdashline[1pt/1pt] 
0 & 0 & \cdots & 0 & A_2 & \gamma T_0
\end{array}
\right]
:= 
\left[
\begin{array}{c;{2pt/2pt}c}
N'' & \mathbf{0}\\
\hdashline[1pt/1pt] 
*&  \gamma T_0
\end{array}
\right].
\end{equation} 
Here $N''$ is the matrix obtained by deleting the last row and column of $N'$. 
It is straightforward to check that $I(N') = I(N'')$ as $T_0$ is a unit in $S_{T_0}$.
Consider the following matrix $N_{T}$ in $S_{T_0}[T]$ (here, T is a new variable)
\begin{equation}
N_{T} =
\left[
\begin{array}{ccccc}
A_0 T & 0 & \cdots & 0 & -A_2 \, T_2/ (\gamma T_0) \\
A_1 & A_0 & \cdots & 0 &  0 \\
A_2 & A_1 & \ddots & 0 & 0 \\
\vdots & \vdots & \vdots & \vdots & \vdots \\
0 & 0 & \cdots & A_1 & A_0 \\
0 & 0 & \cdots & A_2 & A_1 - A_2T_1/\gamma T_0 
\end{array}
\right]
\end{equation}
and let $J_T  := I(N_T)$. Since $J_1 \subseteq I(\alpha_1')S_{T_0}[T]$, $\hgt (J_T) < \infty$. 
We claim that $\hgt J_T \ge 2$. 
We will prove this by showing that $\sqrt{(J_T, T)} = (T_1,T_2,T)$ is of height $3$. 
Once we have proven the claim, then by \Cref{corGeneralHeight}, there exists a non-empty open subset $U$ of $\Spec k[T]$ such that for any (closed) point $\gamma'$ in $U$, $\hgt J_{\gamma'} \ge 2$.\\

First of all, we have $(J_T, T) \subset (A_1, A_2, T)$ since the ideal generated by $A_1,A_2,T$  contains all the entries of the first column. 
Furthermore, if a prime $\p$ contains $(A_1,A_2,T) = (2T_1T_2^2, T_0^2T_1 + T_2^3, T)$, then $T_1,T_2,T$ are in $\p$. 
(Recall that we are in $S_{T_0}$, so $T_0$ is a unit.) 
Thus, we conclude that $\sqrt{ (A_1, A_2, T) }= (T_1,T_2,T)$. 
Consider the minor obtained by deleting the second row of $N_T$ and then putting the first row last 
\begin{equation}\label{labA2}
 \left|
\begin{array}{ccccccc}
A_2 & A_1 & 0 & 0 &\cdots & 0  \\
0 & A_2 & A_1 & A_0 & \cdots & 0  \\
\vdots & \vdots & \vdots & \vdots  & \vdots & \vdots \\
0 & 0 & \cdots & A_2 & A_1 & A_0 \\
0 & 0 & \cdots & 0 & A_2 &  A_1 - A_2T_1/\gamma T_0  \\
A_0 T & 0 & \cdots & 0 & 0 & -A_2 \, T_2/ (\gamma T_0) \\
\end{array}
\right|.
\end{equation}
From this minor, we conclude that $A_2^{\rank N_T}T_2/ (\gamma T_0)$ is in $(J_T,T)$. 
Therefore, if $\p$ is a prime ideal containing $(J_T,T)$, then it contains $A_2$ or $T_2$. 
If $A_2 \in \p$, then from the minor obtained by deleting the first row of $N_T$, we have $A_1 \in \p$. 
In this case, we have $\sqrt{(J_T,T)} = \sqrt{(A_1,A_2,T)} = (T_1,T_2,T) \subset \p$, so $(T_1,T_2,T) = \p$ since $(T_1,T_2,T)$ is a prime of height $3$. 
We show that we have the same conclusion in the case where $T_2 \in \p$. 
Assume $T_2 \in \p$, then since $A_1 = 2T_1T_2^2$, $A_1$ is in $\p$. 
Hence from the minor obtained by deleting the first row of $N_T$ and from the fact that $T, T_2, A_1 \in \p$, the determinant
\begin{equation}
\left| \begin{array}{cccccc}
0 & A_0 & 0 & 0&\cdots & 0 \\
A_2 & 0 & A_0 &0 &\cdots & 0 \\
0 & A_2 & 0 & 0&\cdots & 0 \\
\vdots & \vdots & \vdots&\vdots & \vdots & \vdots\\
0 & 0 & \cdots &A_2& 0& A_0\\
0 & 0 & \cdots &0& A_2 &  - A_2T_1/\gamma T_0
\end{array} \right|
\end{equation}
is in $\p$. 
By \Cref{lemOffDiag}, $A_2^{\nn/2}A_0^{\nn/2}$ is in $\p$ if $\nn$ is even, and $A_2^{(\nn-1)/2}A_0^{(\nn-1)/2}(- A_2T_1/\gamma T_0)$ is in $\p$ if $\nn$ is odd.
Since $T_0^3 \in (A_0, T_2) \subset (A_0, \p)$ is a unit ideal, $A_0 \not\in \p$. 
Therefore, in both cases we have $A_2$ or $T_1$ is in $\p$. 
Since $T_2$ is in $\p$ and $T_0$ is a unit, $A_2 = T_0^2T_1 +T_2^3$ is in $\p$ if and only if $T_1$ is in $\p$. 
Hence in both cases we have $A_2 \in \p$.  
We already showed that if $A_2 \in \p$, then $\p = (T_1,T_2,T)$ (in the paragraph before \cref{labA2}).
Hence we have $\sqrt{(J_T,T)} = \sqrt{(A_2,A_2,T)} = (T_1,T_2,T)$, and this proves the claim. \\

By \Cref{corGeneralHeight}, there exists a non zero $\gamma'$ in $\kk$ such that 
$\hgt N_{\gamma'} = 2$. 
But the maximal minors of the following matrices 
\begin{equation}
N_{\gamma'} =
\left[
\begin{array}{ccccc}
A_0 \gamma' & 0 & \cdots & 0 & -A_2 \, T_2/ (\gamma T_0) \\
A_1 & A_0 & \cdots & 0 &  0 \\
A_2 & A_1 & \ddots & 0 & 0 \\
\vdots & \vdots & \vdots & \vdots & \vdots \\
0 & 0 & \cdots & A_2 & A_1 
\end{array}
\right],
\quad
N'' = 
\left[
\begin{array}{ccccc}
A_0 & 0 & \cdots & 0 & -A_2 \, T_2/ (\gamma \gamma' T_0) \\
A_1 & A_0 & \cdots & 0 &  0 \\
A_2 & A_1 & \ddots & 0 & 0 \\
\vdots & \vdots & \vdots & \vdots & \vdots \\
0 & 0 & \cdots & A_2 & A_1 
\end{array}
\right]
\end{equation}
generate the same ideal. 
We replace $\gamma$ by $\gamma \gamma'$. This does not change our argument since the previous calculation and proof depended only on the fact that $\gamma$ was not zero. 
Therefore, with new $\gamma$, we obtain the desired height for $I (N)$, and this completes the proof.
\end{proof}
\end{example}

\begin{lemma}\label{23case}
Let $S = \kk[T_0,T_1,T_2]$, where $\kk$ is a field, $B = S[x,y,z], \m = (x,y,z)B,  g_0 = T_0x + T_1 y+ T_2 z, g_1= A_0 y^2+ A_1 yz + A_2 z^2$ with $A_i \in S$. If  $g_0,g_1$ form an $B$-regular sequence, then for any $i \ge 2$, the ith graded component (with respect to $x,y,z$ degree) of $B/(g_0,g_1)$  has a representation $\alpha$ of the form 
\begin{equation}
\def\arraystretch{1.8}
\left[ 
\begin{array}{c;{2pt/2pt}c}
\Phi^{g_0}_{i} & \Phi^{g_1}_{i}
\end{array} \right]
 = 
\left. \vphnm \right[
\overbrace{
\underbrace{
\begin{array}{cccc}
T_0 & 0 & \cdots & 0 \\ 
T_1 & T_0 & \cdots & 0 \\
\vdots & \vdots &  \ddots & 0\\
* & * & \cdots & T_0 \\
\hdashline[1pt/1pt]~
* & * & \cdots & * \\
\vdots & \vdots &  \ddots & \vdots \\
\vdots & \vdots &  \ddots & \vdots \\
\vdots & \vdots &  \ddots & \vdots \\
* & * & \cdots & * 
\end{array}
}_{ {2 + i - 1\choose i - 1}~\text{columns}}  
}^{\Phi^{g_0}_{i}} 
\hspace{-1em}\begin{array}{c;{2pt/2pt}c}
~&~\\ ~&~\\ ~&~\\ ~&~\\ ~&~\\ ~&~\\ ~&~\\~&~\\
\end{array}\hspace{-1em}
\overbrace{
%
\begin{array}{ccc}
* & \cdots & *\\
* & \cdots & *\\
\vdots & \vdots & \vdots \\
\vdots & \vdots & \vdots \\
\hdashline[1pt/1pt]~ 
\vdots & \vdots & \vdots \\
\vdots & \vdots & \vdots \\
\vdots & \vdots & \vdots \\
\vdots & \vdots & \vdots \\
* & \cdots & *
\end{array}
\hspace{-1em}\begin{array}{c;{2pt/2pt}c}
~&~\\ ~&~\\ ~&~\\ ~&~\\ ~&~\\ ~&~\\ ~&~\\~&~\\
\end{array}\hspace{-1em}
\underbrace{
\begin{array}{cccc}
0 & 0 & \cdots & 0 \\
\vdots & \vdots & \cdots & \vdots \\
\vdots & \vdots & \cdots & \vdots \\
\vdots & \vdots & \cdots & \vdots \\
\hdashline[1pt/1pt]~ 
A_0 & 0 & \cdots & 0 \\
A_1 & A_0 & \cdots & 0 \\
A_2 & A_1 & \ddots & 0 \\
\vdots & \vdots & \vdots & \vdots \\
0 & 0 & \cdots & A_2
\end{array}
}_{ {1 + i -2 \choose i-2 }~\text{columns}}
}^{\Phi^{g_1}_{i}}
\left. \vphnm \right]
\hspace{-1em}\begin{array}{c;{2pt/2pt}c}
~&~\\ ~&~\\ ~&~\\ ~&~\\ ~&~\\ ~&~\\ ~&~\\~&~\\
\end{array}\hspace{-1em}
\overbrace{
\begin{array}{c}
x^i \\ x^{i-1}y \\ \vdots \\ x z^{i-1} \\ \hdashline[1pt/1pt] y^i \\ y^{i-1}z \\ \vdots \\ \vdots \\ z^i 
\end{array}
}^{basis}
\end{equation} 
Furthermore, we have $\rk \alpha = {2+i-1 \choose 2} + {i-1}$ and $T_0^{2+i-1 \choose 2}(A_0,A_1,A_2)^{i-1} \subset I(\alpha)$.
\end{lemma}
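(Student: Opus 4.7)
The plan is to derive $\alpha$ from the Koszul resolution of $B/(g_0, g_1)$ restricted to degree $i$, then read off the claimed block form using a Lex monomial order. Since $g_0, g_1$ is a $B$-regular sequence, the Koszul complex $\mathcal{K}(g_0, g_1; B)$ resolves $B/(g_0, g_1)$; its degree $i$ strand is the free $S$-resolution
\begin{equation*}
0 \to B_{i-3} \to B_{i-1} \oplus B_{i-2} \xrightarrow{\, [\Phi^{g_0}_i \mid \Phi^{g_1}_i]\, } B_i \to [B/(g_0, g_1)]_i \to 0.
\end{equation*}
I would order each basis of $B_j$ by Lex with $x > y > z$, so that the first $\binom{j+1}{2}$ monomials of $B_j$ are the $x$-divisible ones, in order-preserving bijection with $B_{j-1}$ via $m \mapsto xm$. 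Define $\alpha$ to be the submatrix of $[\Phi^{g_0}_i \mid \Phi^{g_1}_i]$ obtained by keeping all $\binom{i+1}{2}$ columns of $\Phi^{g_0}_i$ together with the $i-1$ columns of $\Phi^{g_1}_i$ indexed by the monomials in $\kk[y,z]_{i-2}$.

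Next I would verify the block form. For a column indexed by $m \in B_{i-1}$ in $\Phi^{g_0}_i$, the entries are $T_0, T_1, T_2$ at rows $xm, ym, zm$ respectively; since $xm >_{\text{Lex}} ym, zm$, the $T_0$ sits on the diagonal of the upper $\binom{i+1}{2} \times \binom{i+1}{2}$ block $U$, while $T_1, T_2$ land strictly below, either in $U$ (when $m$ has positive $x$-degree) or in the lower $(i+1)$ rows forming the block $L$ (when $m \in \kk[y,z]_{i-1}$). The columns of $\Phi^{g_1}_i$ indexed by $m = y^{i-2-k}z^k$ for $0 \le k \le i-2$ lie entirely in the lower block and, expanded in Lex order, produce the staircase $N$ with successive entries $A_0, A_1, A_2$. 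Thus $\alpha$ has the claimed block form, with $U$ lower-triangular of size $\binom{i+1}{2}$ having $T_0$ on the diagonal. That $\alpha$ still presents $[B/(g_0, g_1)]_i$ follows from the rank count $\rk\alpha = \binom{i+1}{2}+(i-1) = \binom{i+2}{2}-2 = \rk[\Phi^{g_0}_i \mid \Phi^{g_1}_i]$ (the module has generic rank $2$); the dropped columns are redundant after inverting $T_0$ via the Koszul identity $T_0 \cdot g_1(xn) \equiv g_0 \cdot g_1 n - T_1\, g_1(yn) - T_2\, g_1(zn) \pmod{g_0 B_{i-1}}$ iterated in the $x$-degree of $m$.

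For the final claim, any maximal minor of $\alpha$ formed from all rows of $U$ together with an $(i-1)$-element subset $J'$ of the lower rows factors by block triangularity as $\det U \cdot \det N_{J'} = T_0^{\binom{i+1}{2}} \cdot \det N_{J'}$. Ranging over all such $J'$ and applying \Cref{lemStaircase} to $N$ with $t = 3$ and $s = i-1$ (the staircase hypotheses $(1)$--$(4)$ are immediate from the description above) yields $I_{i-1}(N) = (A_0, A_1, A_2)^{i-1}$, which gives both the rank equality and the containment $T_0^{\binom{i+1}{2}} (A_0, A_1, A_2)^{i-1} \subseteq I(\alpha)$. The only delicate step will be the Lex-order bookkeeping that produces the triangular/staircase block form; everything else reduces to a direct invocation of \Cref{lemStaircase}.
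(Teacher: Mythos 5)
Your overall route --- take the degree-$i$ strand of the Koszul complex on $g_0,g_1$ as the $S$-free resolution of $[B/(g_0,g_1)]_i$, order monomials by Lex so that $\Phi^{g_0}_i$ carries a lower-triangular $T_0$-block $U$ on the $x$-divisible rows while the columns of $\Phi^{g_1}_i$ indexed by $\kk[y,z]_{i-2}$ form the staircase $N$ in the bottom $i+1$ rows, and then extract the block-triangular maximal minors $\det U\cdot\det N_{J'}=T_0^{\binom{i+1}{2}}\det N_{J'}$ and apply \Cref{lemStaircase} --- is exactly the paper's argument, and those steps are correct.

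The genuine flaw is your redefinition of $\alpha$ as the submatrix obtained by discarding the columns of $\Phi^{g_1}_i$ indexed by $x$-divisible monomials, together with the claim that this trimmed matrix ``still presents $[B/(g_0,g_1)]_i$.'' Your justification (a rank count plus the Koszul identity $T_0\,g_1xm\equiv -T_1\,g_1ym-T_2\,g_1zm$ modulo the columns of $\Phi^{g_0}_i$) only shows the discarded columns lie in the $S_{T_0}$-span of the retained ones; it does not place them in the $S$-span, and equal ranks do not force equal cokernels. In general the cokernel of the trimmed matrix strictly contains $[B/(g_0,g_1)]_i$: already for $i=3$, the dropped column $g_1x$ lies in $g_0B_2+S\,g_1y+S\,g_1z$ only if $A_0\in(T_0,T_1)$ (reduce modulo $T_0$ and compare coefficients of $xy^2$), which fails for instance when $g_1=T_2y^2+T_1z^2$, even though $g_0,g_1$ is then a regular sequence. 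The trimming is also unnecessary: in the statement, $\alpha$ is the full matrix $[\Phi^{g_0}_i\mid\Phi^{g_1}_i]$ --- the starred middle block of the display is precisely the set of columns you dropped --- so it is a presentation simply because it is the first differential of the exact degree-$i$ Koszul strand; the rank count along that strand gives $\rk[B/(g_0,g_1)]_i=2$ (which you only asserted) and hence $\rk\alpha=\binom{i+2}{2}-2=\binom{i+1}{2}+(i-1)$; and your block-triangular minors are already maximal minors of the full matrix, so $T_0^{\binom{i+1}{2}}(A_0,A_1,A_2)^{i-1}\subseteq I(\alpha)$ follows verbatim without any claim about the trimmed matrix.
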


\begin{proof}
We use the polynomial grading on $B$ and write $M := B/(g_0,g_1)$.
Since $g_0,g_1$ form a regular sequence, the Koszul complex $\mathcal{K}:=\mathcal{K}(g_0,g_1;B)$ is a grade $B$-resolution of $M$. By \Cref{pieceRes}, $\mathcal{K}_i$ is a free $S$-resolution of $M_i$. That is 
\begin{equation}
0 \to S^{2+i-3 \choose 2} \stackrel{\alpha_2}\to S^{2+ i-1 \choose 2} \oplus S^{2+i-2 \choose2}  \stackrel{\alpha}{\to} S^{2 + i  \choose 2} \to M_i \to 0
\end{equation}
is exact. Thus, 
\begin{align}
\rk M_i &= {2 + i  \choose 2} + {2+i-3 \choose 2} - {2+ i-1 \choose 2} - {2+i-2 \choose 2} \\
&= \left[{2 + i  \choose 2} - {2+ i-1 \choose 2}\right] - \left[{2+i-2 \choose2} - {2+i-3 \choose 2}\right] \\
&= {2 + i - 1 \choose 1} - {2 + i -3 \choose 1} = 2,
\end{align}

and $\rk \alpha = {2 + i  \choose 2} - 2$.
The presentation follows by using the Lex order on the monomials (in terms of $x,y,z$) of $B$. Since 
\begin{align}
{2+i-1 \choose 2} + \dim_{\kk} (y,z)^{i-2} &=(i+1)i/2 + (i -1) = (i^2 + 3i)/2 - 1  = (i^2+3i+2)/2 - 2 \\
&= (i+2)(i+1)/2 - 2={2+i \choose 2}-2 = \rank \alpha,
\end{align}
 we see that the $T_0^{2+i-1 \choose 2}I(N) \subset I(\alpha)$, where 
\begin{equation}
N = \left[ \begin{array}{cccc}
A_0 & 0 & \cdots & 0 \\
A_1 & A_0 & \cdots & 0 \\
A_2 & A_1 & \cdots & 0 \\
\vdots & \vdots & \vdots & \vdots \\
0 & 0 & \cdots & A_2
\end{array} \right].
\end{equation}
By \Cref{lemStaircase}, $I(N) = (A_0,A_1,A_2)^{i-1}$, and this proves the statement. 
\end{proof}

\begin{lemma}\label{lemOffDiag}
Let $x,y,z$ be variables over $\mathbb{Z}$. Then we have
\begin{equation}
\left| \begin{array}{cccccc}
0 & x & 0 & 0&\cdots & 0 \\
y & 0 & x &0 &\cdots & 0 \\
0 & y & 0 & x&\cdots & 0 \\
\vdots & \vdots & \vdots&\vdots & \vdots & \vdots\\
0 & 0 & \cdots &y& 0& x\\
0 & 0 & \cdots &0& y& 0
\end{array} \right|
= \begin{cases} 
(-1)^{\frac n2}x^{\frac n2} y^{\frac n2}, & if~n~is~even \\
0, & if~n~is~odd
\end{cases}
~~~~~and~
\left| \begin{array}{cccccc}
0 & x & 0 & 0&\cdots & 0 \\
y & 0 & x &0 &\cdots & 0 \\
0 & y & 0 & x&\cdots & 0 \\
\vdots & \vdots & \vdots&\vdots & \vdots & \vdots\\
0 & 0 & \cdots &y& 0& x\\
0 & 0 & \cdots &0& y& z
\end{array} \right|
= \begin{cases} 
(-1)^{\frac n2} x^{\frac n2} y^{\frac n2}, & if~n~is~even \\
(-1)^{\frac{n-1}2}x^{\frac{n-1}2} y^{\frac{n-1}2} z, & if~n~is~odd.
\end{cases}
\end{equation}
\end{lemma}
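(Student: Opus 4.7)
\medskip

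The plan is a straightforward induction on $n$ based on cofactor expansion, treating both determinants by the same recurrence. Let $D_n$ denote the first determinant (all zeros on the diagonal) and $E_n$ the second one (same matrix but with $z$ in the bottom right corner).

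First I would expand $D_n$ along the first column. Since the only nonzero entry there is $y$ in position $(2,1)$, we obtain $D_n = -y \cdot M$, where $M$ is the determinant of the $(n-1)\times(n-1)$ minor obtained by deleting row $2$ and column $1$. Next, expanding $M$ along its first row, whose only nonzero entry is $x$ in position $(1,1)$, collapses $M$ to $x \cdot D_{n-2}$. This yields the recurrence
\begin{equation}
D_n \;=\; -xy \cdot D_{n-2}.
\end{equation}
The same two-step expansion applied to $E_n$ (the perturbation by $z$ lives only in the very last row, so it is untouched by the deletions) gives
\begin{equation}
E_n \;=\; -xy \cdot E_{n-2}.
\end{equation}

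Then I would establish the base cases directly: $D_1=0$, $D_2 = -xy$, $E_1 = z$, and $E_2 = -xy$. Iterating the recurrences gives, for $n$ even, $D_n = (-xy)^{n/2-1} D_2 = (-1)^{n/2} x^{n/2} y^{n/2}$ and similarly $E_n = (-1)^{n/2} x^{n/2} y^{n/2}$; for $n$ odd, $D_n = (-xy)^{(n-1)/2} D_1 = 0$ and $E_n = (-xy)^{(n-1)/2} z = (-1)^{(n-1)/2} x^{(n-1)/2} y^{(n-1)/2} z$. These match the four cases in the statement.

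There is no real obstacle here; the only thing to be careful about is bookkeeping of the sign $(-1)$ from the $(2,1)$-cofactor and the parity of $n$ when terminating the recursion. Because the recurrence steps down by two, the induction splits cleanly into the even and odd cases, which is precisely why the formulas split the way they do in the statement.
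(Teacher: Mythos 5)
Your proof is correct and is essentially the argument the paper intends: an induction carried out by cofactor expansion, with the recurrence $D_n=-xy\,D_{n-2}$ (and likewise for the second determinant) verified exactly as you describe. The only cosmetic difference is that the paper treats the second determinant by a Laplace expansion along the last row, reducing it to the first formula, whereas you run the same two-step recurrence $E_n=-xy\,E_{n-2}$ with base cases $E_1=z$, $E_2=-xy$; both come to the same bookkeeping.
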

\begin{proof}
For the first determinant, one can use induction on the size of the matrix, and for the second determinant, one can use the Laplace expansion along the last row (or column) and use the result of the first determinant.
\end{proof}

\begin{remark}
\item The non-empty open subset for the conditions in \Cref{mainThm} is a proper subset of the parameter space. The following matrix provides an example where $I:= I_3(M)$ satisfies condition (1) in the proof of \Cref{mainThm}, but not condition (2). \begin{equation}
M = 
\begin{bmatrix}
x	&0	&x^3\\
y	&x^2	&0\\
0	&y^2	&z^3\\
0	&z^2	&0
\end{bmatrix}.
\end{equation}
It is interesting that the rational map induced by $I$ is birational to its image. 
\end{remark}

We will end the paper with a couple of questions.
\begin{question}\label{lastQues}
\begin{enumerate}
\item 
Let $U$ be the non-empty open subset corresponds to the general condition in \Cref{mainThm}. Can we determine the closed set which is the complement of $U$? In the case where $R = \kk[x_1,x_2]$, Kustin, Polini, Ulrich were able to connect this condition to a geometric condition (See \cite[Lemma 2.10]{KPU17}).\\

\item 
In the general case where $R = \kk[x_1,\dots, x_n]$, does the following  presentation matrix (or its variations) of $I = I_n(\varphi)$
\begin{equation}
\varphi = \begin{bmatrix}
x_1^{d_1} &  x_1^{d_2} & \cdots & x_1^{d_n} \\
x_2^{d_1} & x_2^{d_2} & \cdots & x_2^{d_n} \\
\vdots & \vdots & \ddots &  \vdots \\
x_n^{d_1} & x_n^{d_n} & \cdots & x_n^{d_n} \\
g_1 & \cdots & \cdots & g_n 
\end{bmatrix},
\end{equation}
where $g_i$ are symmetric polynomials of degree $d_i$, provide an example of defining a similar general property described in \Cref{mainThm}?\\

Note that the corresponding presentation matrices for the graded pieces of $\Sym_R(I)$ are invariant under the action of the cyclic group generated by the cycle $(T_0, \dots, T_{n-1})$. We hope an expert can utilize this fact and provide an alternative way to verify \Cref{bigExample}. 
Recall that the more general $\varphi$ becomes, the more challenging to verify conditions. 
\end{enumerate}
\end{question}

\end{document}